\newcommand{\IID}{\stackrel{IID}{\sim}} % IID samples
\newcommand{\F}{\mathcal{F}} % General Function class
\newcommand{\X}{\mathcal{X}} % Sample space
\renewcommand{\L}{\mathcal{L}} % Lebesgue space
\newcommand{\C}{\mathcal{C}} % Space of Continuous Functions
\newcommand{\B}{\mathcal{B}} % Besov Space
\newcommand{\R}{\mathbb{R}} % Real numbers
\newcommand{\N}{\mathbb{N}} % Natural numbers
\newcommand{\Z}{\mathbb{Z}} % Integers
\newcommand{\E}{\mathop{\mathbb{E}}} % Expectation operator
\renewcommand{\P}{\mathcal{P}} % Family of probability measures
\newcommand{\norm}[1]{\left\lVert#1\right\rVert} %norm
\newcommand{\M}{\mathcal{M}} % The risk
\renewcommand{\hat}{\widehat}
\renewcommand{\tilde}{\widetilde}
\newtheorem{theorem}{Theorem}
\newtheorem{corollary}[theorem]{Corollary}
\newtheorem{lemma}[theorem]{Lemma}
\newtheorem{definition}[theorem]{Definition}
\title{Robust Density Estimation under Besov IPMs}
\author{%
  Ananya Uppal\\
%   \thanks{Use footnote for providing further information about author (webpage, alternative address)---\emph{not} for acknowledging funding agencies.} \\
  Department of Mathematical Sciences\\
  Carnegie Mellon University\\
%   Pittsburgh, PA 15213 \\
  \texttt{auppal@andrew.cmu.edu} \\
  % examples of more authors
   \And
   Shashank Singh\\
%   Google\\
   Machine Learning Department\\
   Carnegie Mellon University\\
   \texttt{shashanksi@google.com} \\
   \And
   Barnab\'as P\'oczos\\
   Machine Learning Department\\
   Carnegie Mellon University\\
   \texttt{bapoczos@cs.cmu.edu} \\
%   \And
%   Barnab\'as P\'oczos \\
%   Carnegie Mellon University\\
%   \texttt{bapoczos@cs.cmu.edu} \\
  % \texttt{email} \\
  % \AND
  % Coauthor \\
  % Affiliation \\
  % Address \\
  % \texttt{email} \\
  % \And
  % Coauthor \\
  % Affiliation \\
  % Address \\
  % \texttt{email} \\
  % \And
  % Coauthor \\
  % Affiliation \\
  % Address \\
  % \texttt{email} \\
}
\begin{document}

\maketitle

\begin{abstract}
    We study minimax convergence rates of nonparametric density estimation under the Huber contamination model, in which a proportion of the data comes from an unknown outlier distribution. We provide the first results for this problem under a large family of losses, called Besov integral probability metrics (IPMs), that include the $\L^p$, Wasserstein, Kolmogorov-Smirnov, Cramer-von Mises, and other commonly used metrics. Under a range of smoothness assumptions on the population and outlier distributions, we show that a re-scaled thresholding wavelet estimator converges at minimax optimal rates under a wide variety of losses and also exhibits optimal dependence on the contamination proportion. We also provide a purely data-dependent extension of the estimator that adapts to both an unknown contamination proportion and the unknown smoothness of the true density. Finally, based on connections recently shown between density estimation under IPM losses and generative adversarial networks (GANs), we show that certain GAN architectures are robustly minimax optimal.
\end{abstract}

\section{Introduction}
    
    In many settings, observed data contains not only samples from the distribution of interest, but also a small proportion of outlier samples. Because these outliers can exhibit arbitrary, unpredictable behavior, they can be difficult to detect or to explicitly account for. This has inspired a large body of work on \emph{robust statistics}, which seeks statistical methods for which the error introduced by a small proportion of arbitrary outlier samples can be controlled.
    
    The majority of work in robust statistics has focused on providing guarantees under the Huber $\epsilon$-contamination model~\citep{huber1965robust}. Under this model, data is assumed to be observed from a mixture distribution $(1 - \epsilon) P + \epsilon G$, where $P$ is an unknown population distribution of interest, $G$ is an unknown outlier distribution, and $\epsilon \in [0,1)$ is the ``contamination proportion'' of outlier samples. Equivalently, this models the misspecified case in which data are drawn from a small perturbation by $\epsilon(G - P)$ of the target distribution $P$ of interest. The goal is then to develop methods whose performance degrades as little as possible when $\epsilon$ is non-negligible.
    
    The present paper studies nonparametric density estimation under this model. Specifically, given independent and identically distributed samples from the mixture $(1 - \epsilon) P + \epsilon G$, we characterize minimax optimal convergence rates for estimating $P$. Prior work on this problem has assumed $P$ has a H\"older continuous density $p$ and has provided minimax rates under total variation loss~\citep{chen2018robust} or for estimating $p(x)$ at a point $x$~\citep{liu2017density}. In the present paper, in addition to considering a much wider range of smoothness conditions (characterized by $p$ lying in a Besov space), we provide results under a large family of losses called integral probability metrics (IPMs);
    \begin{equation}
        d_\F(P, Q) = \sup_{f\in \F}
            \left|
                \E_{X\sim P}f(X)-\E_{X\sim Q}f(X)
            \right|,
        \label{eq:IPM}
    \end{equation}
    where $P$ and $Q$ are probability distributions and $\F$ is a ``discriminator class'' of bounded Borel functions. As shown in several recent papers~\citep{liu2017approximationInGANs,liang2018well,singh2018adversarial,uppal2019nonparametric}, IPMs play a central role not only in nonparametric statistical theory and empirical process theory, but also in the theory of generative adversarial networks (GANs). Hence, this work advances not only basic statistical theory but also our understanding of the robustness properties of GANs.
    
    In this paper, we specifically discuss the case of Besov IPMs, in which $\F$ is a Besov space (see Section~\ref{sec:setup}).
    % Besov IPMs comprise a large family of losses that have recently received much attention~\citep{liu2017approximationInGANs,liang2018well,singh2018adversarial,uppal2019nonparametric} due to connections both with classical problems in nonparametric statistics and with recent machine learning tools such as generative adversarial networks (GANs).
    In classical statistical problems, Besov IPMs provide a unified formulation of a wide variety of distances, including $\L^p$ \citep{wasserman2006nonparametric,tsybakov2009introduction}, Sobolev \citep{mroueh2017sobolevGANs,leoni2017first},
    maximum mean discrepancy (MMD; \citep{tolstikhin2017minimax})/energy \citep{szekely2007distances,ramdas2017wasserstein}, 
    % total variation \citep{villani2008optimal},
    Wasserstein/Kantorovich-Rubinstein \citep{kantorovich1958space,villani2008optimal}, Kolmogorov-Smirnov \citep{kolmogorov1933sulla,smirnov1948table}, and Dudley metrics \citep{dudley1972speeds,abbasnejad2018deep}. Hence, as we detail in Section~\ref{sec:examples}, our bounds for robust nonparametric density estimation apply under many of these losses. More recently, it has been shown that generative adversarial networks (GANs) can be cast in terms of IPMs, such that convergence rates for density estimation under IPM losses imply convergence rates for certain GAN architectures~\citep{liang2018well,singh2018adversarial,uppal2019nonparametric}. Thus, as we show in Section~\ref{sec:GANs}, our results imply the first robustness results for GANs in the Huber model.
    
    In addition to showing rates in the classical Huber model, which avoids assumptions on the outlier distribution $G$, we consider how rates change under additional assumptions on $G$. Specifically, we show faster convergence rates are possible under the assumption that $G$ has a bounded density $g$, but that these rates are not further improved by additional smoothness assumptions on $g$.
    
    Finally, we overcome a technical limitation of recent work studying density estimation under Besov IPMs losses. Namely, the estimators used in past work rely on the unrealistic assumption that the practitioner knows the Besov space in which the true density lies. This paper provides the first convergence rates for a purely data-dependent density estimator under Besov IPMs, as well as the first nonparametric convergence guarantees for a fully data-dependent GAN architecture.

    \subsection{Paper Organization}
        The rest of this paper is organized as follows. Section~\ref{sec:notation} formally states the problem we study and defines essential notation. Section~\ref{sec:related_work} discusses related work in nonparametric density estimation. Section~\ref{sec:unstructured} contains minimax rates under the classical ``unstructured'' Huber contamination model, while Section~\ref{sec:structured} studies how these rates change when additional assumptions are made on the contamination distribution. Section~\ref{sec:examples} develops our general results from Sections~\ref{sec:unstructured} and \ref{sec:structured} into concrete minimax convergence rates for important special cases.
        Finally, Section~\ref{sec:GANs} applies our theoretical results to bound the error of perfectly optimized GANs in the presence of contaminated data. All theoretical results are proven in the Appendix.
\section{Formal Problem Statement}
    \label{sec:notation}

    We now formally state the problems studied in this paper.
    Let $p$ be a density of interest and $g$ be the contamination density such that $X_1, \dots, X_n\sim (1-\epsilon)p+\epsilon g$ are $n$ IID samples. We wish to use these samples to estimate $p$. We consider two qualitatively different types of contamination, as follows.
    
    In the ``unstructured'' or Huber contamination setting, we assume that $p$ lies in some regularity class $\F_g$, but $g$ may be any compactly supported density. In particular, we assume that the data is generated from a density living in the set $\M(\epsilon, \F_g)=\{(1-\epsilon)p+\epsilon g: p \in \F_g, g \text{ has compact support}\}$.
    We then wish to bound the minimax risk of estimating $p$ under an IPM loss $d_{\F_d}$; i.e., the quantity
    \begin{equation}
        \mathcal{R}(n, \epsilon,\F_g, \F_d) = \inf_{\hat{p}_n}\sup_{f\in \M(\epsilon, \F_g)} \E_{f} \left[ d_{\F_d}(p, \hat{p}_n) \right]
        \label{eq:unstructured_risk}
    \end{equation}
    where the infimum is taken over all estimators $\hat{p}_n$.
    
    In the ``structured'' contamination setting, we additionally assume that the contamination density $g$ lives in a smoothness class $\F_c$. The data is generated by a density in $\M(\epsilon, \F_g, \F_c) = \{(1-\epsilon)p +\epsilon g: p\in \F_g, g\in \F_c\}$ and we seek to bound the minimax risk
    \begin{equation}
        \mathcal{R}(n, \epsilon, \F_g, \F_c, \F_d) = \inf_{\hat{p}_n}\sup_{f\in \M(\epsilon, \F_g, \F_c)} \E_f \left[ d_{\F_d}(\hat{p}_n, p) \right].
        \label{eq:structured_risk}
    \end{equation}
    In the following section, we provide notation to formalize the spaces $\F_g, \F_c$ and $\F_d$ that we consider.
    
    \subsection{Set up and Notation}
    \label{sec:setup}
    For non-negative real sequences $\{a_n\}_{n \in \N}$, $\{b_n\}_{n \in \N}$, $a_n \lesssim b_n$ indicates $\limsup_{n \to \infty} \frac{a_n}{b_n} < \infty$, and $a_n~\asymp~b_n$ indicates $a_n \lesssim b_n \lesssim a_n$.
    For $q \in [1,\infty]$, $q' := \frac{q}{q - 1}$ denotes the H\"older conjugate of $q$ (with $1' = \infty$, $\infty' = 1$). $\L^q(\R^D)$ (resp. $l^q$) denotes the set of functions $f$ (resp. sequences $a$) with $\norm{f}_q := \left(\int |f(x)|^q \, dx \right)^{1/q}<\infty$ (resp. $\norm{a}_{l^q} := \left(\sum_{n\in \N}|a_n|^q\right)^{1/q}<\infty$).
    
    We now define the family of Besov spaces studied in this paper. Besov spaces generalize H\"older and Sobolev spaces and are defined using wavelet bases. As opposed to the Fourier basis, wavelet bases provide a localization in space as well as frequency which helps express spatially inhomogeneous smoothness. 
    
    A wavelet basis is formally defined by the mother ($\psi(x)$) and father($\phi(x)$) wavelets. The basis consists of two parts; first, the set of translations of the father and mother wavelets i.e. 
    \begin{align}
        \Phi &=\{\phi(x-k):k\in \Z^d\}\\
        \Psi &=\{\psi_\epsilon(x-k):k\in \Z^d, \epsilon \in \{0,1\}^D\},
    \end{align}
    second, the set of daughter wavelets, i.e.,
    \begin{align}
        \Psi_j &=\{ 2^{Dj/2} \psi_\epsilon(2^{Dj}x-k):k\in \Z^d, \epsilon \in \{0,1\}^D\}.
    \end{align}
    Then the union $\Phi\cup\Psi\cup(\bigcup_{j\geq0})\Psi_j$ is an orthonormal basis for $\L^2(\R^D)$.

    We defer the technical assumptions on the mother and father wavelet to the appendix. Instead for intuition, we illustrate in Figure~\ref{fig:haar} the few terms of the best-known wavelet basis of $\L^2(\R)$, the Haar wavelet basis.
    \begin{figure}
        \centering
        \includegraphics[width=0.95\linewidth]{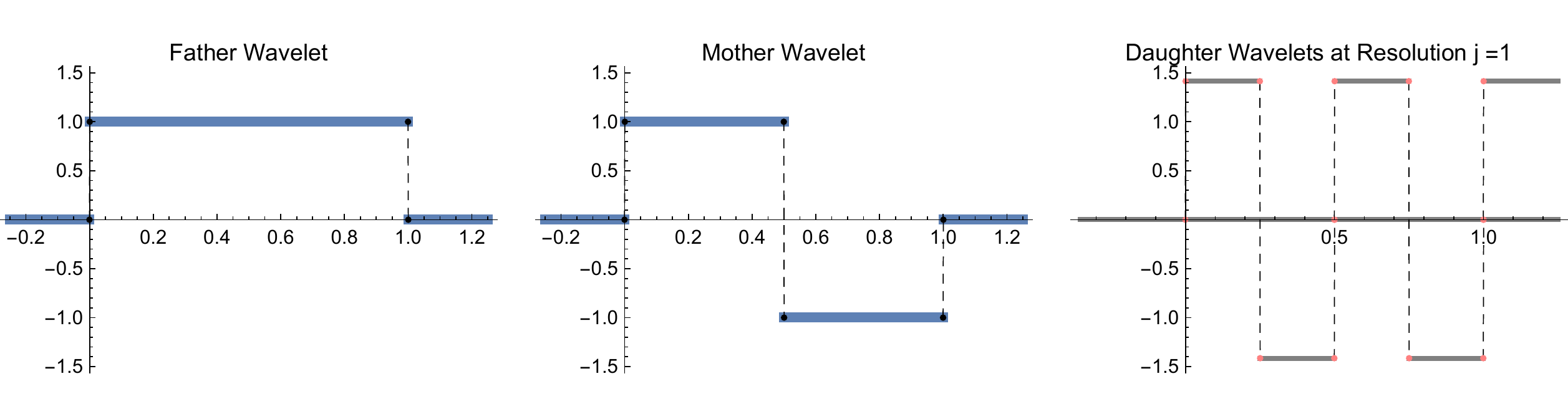}
        \caption{Father, Mother, and first few Daughter elements of the Haar Wavelet Basis.}
        \label{fig:haar}
            \vspace{-3ex}
    \end{figure}
    
    In higher dimensions, the wavelet basis is defined using the tensor product of wavelets in dimension 1. For details, see, \cite{hardle2012wavelets} and \cite{meyer1992wavelets}.
     
     To effectively express smooth functions we will require $r$-regular ($r$-regularity is precisely defined in the appendix) wavelets. We assume throughout our work that $\phi$ and $\psi$ are compactly supported $r$-regular wavelets.  We now formally define a Besov space.

    \begin{definition}[Besov Space] Given an $r$-regular wavelet basis of $\L^2(\R^D)$, let $0 \leq \sigma < r$, and $p,q \in [1,\infty]$. Then the \emph{Besov space} $B^{\sigma}_{p,q}(\R^D)$ is defined as the set of functions $f : \R^D \to \R$ that satisfy
	\begin{align}
	     \norm{f}_{B_{p,q}^\sigma} 
	     := \norm{\alpha}_{l^p}+
	    \norm{
	        \left\{
	            2^{j(\sigma+D(1/2-1/p))}
	            \norm{\beta_j}_{l^p}
	      \right\}_{j \in \N}}_{l^q}
	    <\infty
	\end{align}
	where $\alpha$ is the set of vectors $\{\alpha_{\phi}\}_{\phi\in \Phi}$ where $\alpha_\phi := \int_{\R^D} f(x)\phi(x)dx$ and $\beta_j$ is the set of vectors $\{\beta_{\psi}\}_{\psi \in \Psi_j}$, where $\beta_{\psi} := \int_{\R^D} f(x) \psi(x) dx$.
    \end{definition}
    The quantity $\|f\|_{B^\sigma_{p,q}}$ is called the \emph{Besov norm of $f$}. For any $L > 0$, we write $B^\sigma_{p,q}(L)$ to denote the closed Besov ball $B^\sigma_{p,q}(L) = \{f \in B^\sigma_{p,q} : \|f\|_{B^\sigma_{p,q}} \leq L\}$.
	When the constant $L$ is unimportant (e.g., for \emph{rates} of convergence), $B^\sigma_{p,q}$ denotes a ball $B_{p,q}^{\sigma}(L)$ of finite but arbitrary radius $L$.
    We provide well-known examples from the rich class of resulting spaces in Section~\ref{sec:examples}.

    We now define ``linear (distribution) estimators'', a commonly used sub-class of distribution estimators:
    \begin{definition}[Linear Estimator]
    \label{def:linear_est}
    Let $(\Omega, \F, P)$ be a probability space. An estimate $\hat P$ of $P$ is said to be \emph{linear} if there exist functions $T_i(X_i,\cdot) : \F \to \R$ such that for all measurable $A \in \F$,
    $
        \hat{P}(A) = \sum_{i = 1}^n T_i(X_i,A).
    $
    \end{definition}
    
    Common examples of linear estimators are the empirical distribution, the kernel density estimator and the linear wavelet series estimator considered in this paper.
\section{Related Work}
\label{sec:related_work}

    This paper extends recent results in both non-parametric density estimation and robust estimation.
    We now summarize the results of the most relevant papers, namely those of \citet{uppal2019nonparametric}, \citet{chen2016general}, and \citet{liu2017density}.
    
    \subsection{Nonparametric Density Estimation under Besov IPM Losses}

    \citet{uppal2019nonparametric} studied the estimation of a density lying in a Besov space $B_{p_g,q_g}^{\sigma_g}$ under Besov IPM loss $d_{B_{p_d,q_d}^{\sigma_d}}$ with uncontaminated data. As shorthand, we will write $\mathcal{R}(n, \F_g, \F_d) = \mathcal{R}(n, 0,\F_g, \F_d)$ and $\mathcal{R}(n, \F_g, \F_c, \F_d) = \mathcal{R}(n, 0, \F_g, \F_c, \F_d)$ to denote the corresponding uncontaminated rates derived by \citet{uppal2019nonparametric}. They used the wavelet thresholding estimator, proposed in \cite{donoho1996density}, to derive a minimax convergence rate of the form
    \begin{equation}
        \mathcal{R} \left( n, \F_g, \F_d \right)
        = n^{-1/2}
        + n^{-\frac{\sigma_g+\sigma_d}{2\sigma_g+D}}
        + n^{-\frac{\sigma_g+\sigma_d - D/p_g + D/p_d'}{2\sigma_g + D \left( 1 - 2/p_g \right)}},
        \label{eq:uncontaminated_minimax_rate}
    \end{equation}
    (omitting polylog factors in $n$).
    Extending a classical result of \citet{donoho1996density}, they also showed that, if the estimator is restricted to be linear (in the sense of Def.~\ref{def:linear_est}), then the minimax rate slows to
    \begin{equation}
        \mathcal{R}_L \left( n, \F_g, \F_d \right)
        = n^{-1/2}
        + n^{-\frac{\sigma_g+\sigma_d}{2\sigma_g+D}}
		+ n^{-\frac{\sigma_g+\sigma_d-D/p_g+D/p_d'}{2\sigma_g+D(1-2/p_g+2/p_d')}}.
        \label{eq:linear_uncontaminated_minimax_rate}
    \end{equation}
    The first two terms in ~\eqref{eq:uncontaminated_minimax_rate} \& \eqref{eq:linear_uncontaminated_minimax_rate} are identical and the third term in \eqref{eq:linear_uncontaminated_minimax_rate} is slower. In particular, when $p_d' > p_g$ and $\sigma_d < D/2$, linear estimators are strictly sub-optimal, while the wavelet thresholding estimator converges at the optimal rate. The present paper extends this work in two directions.
    
    First, we study how the minimax risk of estimating the data density $p$ changes when the observed data are contaminated by a proportion $\epsilon$ of outliers from a (potentially adversarially chosen) contamination distribution $g$. We show that, in most cases, wavelet thresholding estimators remain minimax optimal under both structured and unstructured contamination settings. Moreover, for $p_d'\leq p_g$ linear wavelet estimators are minimax optimal under the structured contamination setting and the unstructured contamination setting if the IPM is generated by a smooth enough class of functions ($\sigma_d\geq D/p_d$).
    
    Second, noting that the estimators of \citet{uppal2019nonparametric} rely on knowledge of the smoothness parameter $\sigma_g$ of the true density, we consider the more realistic case where $\sigma_g$ is unknown. We develop a fully data-dependent variant of the wavelet thresholding estimator from \citet{uppal2019nonparametric} that is minimax optimal for all $\sigma_g$ under structured contamination.
        
    Finally, \citet{uppal2019nonparametric} also applied their results to bound the risk of a particular generative adversarial network (GAN) architecture.
    % leveraging an oracle inequality (from \citet{liang2018well}), a construction (by \citet{suzuki2018adaptivity}) of a deep neural network that approximates Besov functions, and an equivalence result (due to \citet{singh2018adversarial}) between density estimation and implicit generative modeling (sampling). 
    They show that the GAN is able to learn Besov densities at the minimax optimal rate. In this paper, we show that the same GAN architecture continues to be minimax optimal in the presence of outliers, and that, with minor modifications, it can do so without knowledge of the smoothness $\sigma_g$ of the true density.
    
    \subsection{Nonparametric Density Estimation with Huber Contamination}
    
    \citet{chen2016general} give a unified study of a large class of robust nonparametric estimation problems under the total variation loss. In the particular case of estimating a $\sigma_g$-H\"older continuous density, their results imply a minimax convergence rate of $n^{-\frac{\sigma_g}{2\sigma_g+1}}+\epsilon$,
    matching our results (theorem~\ref{thm:non_linear_unstructured_rate}) for total variation loss.
    The results of \citet{chen2016general} are quite specific to total variation loss, whereas, we provide results for a range of loss functions as well as densities of varying smoothness.
    Moreover, the estimator studied by \citet{chen2016general} is not computable in practice. It involves solving a testing problem between all pairs of points in a total variation cover of the hypothesis class in which the true density is assumed to lie. In contrast, our upper bounds rely on a simple thresholded wavelet series estimator, which can be computed in linear time (in the sample size $n$) with a fast wavelet transform.
    
    \citet{liu2017density} studied $1$-dimensional density estimation at a point $x$ (i.e., estimating $p(x)$ instead of the entire density $p$) for H\"older smoothness densities under the Huber $\epsilon$-contamination model. In the case of unstructured contamination (arbitrary $G$), \citet{liu2017density} derived a minimax rate of
    \begin{equation}
        n^{-\frac{\sigma_0}{2\sigma_0 + 1}} + \epsilon^{\frac{\sigma_0}{\sigma_0 + 1}}
        \label{rate:liu_gao}
    \end{equation}
    in root-mean-squared error. With the caveats that we study estimation of the entire density $p$ rather than a single point $p(x)$ and assume that $G$ has a density $g$, this corresponds to our setting when $p_g=q_g=\infty$, and $D=1$. Our results (equation~\ref{rate:unstructured_dense_rate}) imply an upper bound on the rate of
    \begin{equation}
        n^{-\frac{\sigma_0}{2\sigma_0+1}} + \epsilon^{\frac{\sigma_0}{\sigma_0+(1 - 1/p)}}
        \label{rate:liu_gao_ours}
    \end{equation}
    under the $\L^p$ loss. Interestingly, this suggests that estimating a density at a point under RMSE is harder than estimating an entire density under $\L^2$ loss, and is, in fact, as hard as estimation under $\L^\infty$ ($\sup$-norm) loss. While initially perhaps surprising, this makes sense if one thinks of rates under $\L^\infty$ loss as being the rate of estimating the density at the worst-case point over the sample space, which may be the point $x$ at which \citet{liu2017density} estimate $p(x)$; under minimax analysis, these become similar.
    
    We generalize these rates to (a) dimension $D > 1$, (b) densities $p$ lying in Besov spaces $B_{p_g,q_g}^{\sigma_g}$, and (c) a wide variety of losses parametrized by Besov IPMs ($B_{p_d,q_d}^{\sigma_d}$).
    
    \citet{liu2017density} also study the case of structured contamination, in which $g$ is assumed to be $\sigma_c$-H\"older continuous. Because they study estimation at a point, their results depend on an additional parameter, denoted $m$, which bounds the value of the contamination density $g$ at the target point (i.e., $g(x) \leq m$). They derive a minimax rate of
    \begin{equation}
        n^{-\frac{\sigma_g}{2\sigma_g + 1}} + \epsilon \min\{1, m\} + n^{-\frac{\sigma_c}{2\sigma_c + 1}} \epsilon^{-\frac{\sigma_c}{2\sigma_c + 1}}.
    \end{equation}
    This rate contains a term depending only on $n$ that is identical to the minimax rate in the uncontaminated case, a term depending only on $\epsilon$, and a third ``mixed'' term.
    Notably, one can show that this mixed term $n^{-\frac{\sigma_c}{2\sigma_c + 1}} \epsilon^{-\frac{\sigma_c}{2\sigma_c + 1}}$ is always dominated by $n^{-\frac{\sigma_g}{2\sigma_g + D}} + \epsilon$, and so, unless $m \to 0$ as $n \to \infty$, the mixed term is negligible. In this paper, because we study estimation of the entire density $p$, the role of the parameter $m$ is played by $M := \|g\|_\infty$. Since $g$ is assumed to be a density with bounded support, we cannot have $M \to 0$; thus, in our results, the mixed term does not appear. Aside from this distinction, our results (Theorem~\ref{thm:structured_rate}) again generalize the results of \citet{liu2017density} to higher dimensions, other Besov classes of densities, and new IPM losses.
        
    Finally, we mention two early papers on robust nonparametric density estimation by \citet{kim2012robust} and \citet{vandermeulen2013consistency}. These papers introduced variants of kernel density estimation based on $M$-estimation, for which they demonstrated robustness to arbitrary contamination using influence functions. These estimators are more complex than the scaled series estimates we consider, in that they non-uniformly weight the kernels centered at different sample points. While they also showed $\L^1$ consistency of these estimators, they did not provide rates of convergence, and so it is not clear when these estimators are minimax optimal.
\section{Minimax Rates}
        Here we give our main minimax bounds. First, we state the estimators used for the upper bounds.
        \paragraph{Estimators:} To illustrate the upper bounds we consider two estimators that have been widely studied in the uncontaminated setting (see \cite{donoho1996density}, \cite{uppal2019nonparametric}) namely the wavelet thresholding estimator and the linear wavelet estimator. All bounds provided here are tight up to polylog factors of $n$ and $1/\epsilon$.

    For any $j_1\geq j_0\geq 0$ the wavelet thresholding estimator is defined as
        \begin{equation}
    		\hat{p}_n
    		=   \sum_{ \phi\in \Phi} \hat{\alpha}_{\phi} \phi+
    		    \sum_{j=0}^{j_0} \sum_{\psi\in \Psi_j} \hat{\beta}_{\psi}\psi +
    		    \sum_{j=j_0}^{j_1} \sum_{\psi\in \Psi_j} \tilde{\beta}_{\psi}\psi
    		    \label{eq:nonlinear_wavelet_estimator}
    	\end{equation}
	where $\hat{\alpha}_{\phi} = \frac{1}{n}\sum_{i=1}^n \phi(X_i)$ and $\hat{\beta}_{\psi} = \frac{1}{n}\sum_{i=1}^n \psi(X_i)$ and coefficients of some of the wavelets with higher resolution (i.e., $j \in [j_0, j_1]$) are hard-thresholded: $\tilde{\beta}_{\psi} = \hat{\beta}_\psi 1_{\hat{\beta}_\psi\geq t}$ for threshold $t = c\sqrt{j/n}$, where $c$ is a constant. 
	
	The linear wavelet estimator is simply $\hat{p}_n$ with only linear terms (i.e., $j_0=j_1$).
	Here $j_0, j_1$ correspond to smoothing parameters which we carefully choose to provide upper bounds on the risk. In the sequel, let $\F_g=B^{\sigma_g}_{p_g, q_g}(L_g)$ and $\F_d = B^{\sigma_d}_{p_d, q_d}(L_d)$ be Besov spaces. 
\subsection{Unstructured Contamination}
\label{sec:unstructured}
    In this section we consider the density estimation problem under Huber's $\epsilon$ contamination model; i.e. we have no structural assumptions on the contamination. Let 
        $
            X_1, \dots, X_n \IID (1-\epsilon)p+\epsilon g
        $,
    where $p$ is the true density and $g$ is any compactly supported probability density. We provide bounds on the minimax risk of estimating the density $p$. We let
    \begin{equation}
         \M(\epsilon, \F_g)  =\{(1-\epsilon)p+\epsilon g: p\in \F_g, g \text{ has compact support}\}
    \end{equation}
    and bound the minimax risk 
    \begin{equation}
        \mathcal{R}(n, \epsilon, \F_g, \F_d)=
        \inf_{\hat{p}}\sup_{f\in \M(\epsilon, \F_g)} \E_f d_{\F_d}(\hat{p},p)
    \end{equation}
    where the infimum is taken over all estimators $\hat{p}_n$ constructed from the $n$ IID samples.
	
% 	\AU{What is the sparse regime?}
	We first present our results for what \citet{uppal2019nonparametric} called the ``Sparse'' regime $p_d'\geq p_g$, in which the worst-case error is caused by large ``spikes'' in small regions of the sample space. Within this ``Sparse'' regime, we are able to derive minimax convergence rates for all Besov spaces $\F_g=B^{\sigma_g}_{p_g, q_g}(L_g)$ and $\F_d = B^{\sigma_d}_{p_d, q_d}(L_d)$. Surprisingly, we find that linear and nonlinear estimators have identical, rate-optimal dependence on the contamination proportion $\epsilon$ in this setting. Consequently, if $\epsilon$ is sufficiently large,
% 	(e.g. \AU{larger than the breakdown point given in Eq.~\eqref{eq:breakdown_point}}),
	then the difference in asymptotic rate between linear and nonlinear estimators vanishes.
    We first show the minimax rate in this setting that is achieved by a scaled version wavelet thresholding estimator i.e. $\frac{1}{(1-\epsilon)}\hat{p}_n$. The proof is provided in section B.1 of the appendix. 
    % with the choice of 
    %     \begin{align*}
    %         2^{j_0} = n^{\frac{1}{2\sigma_g+D}}
    %         \quad \text{ and } \quad
    %         2^{j_1} = n^{\frac{1}{2\sigma_g+D-2D/p_g}}\wedge \epsilon^{-\frac{1}{\sigma_g+D-D/p_g}}.
    %     \end{align*}         
    \begin{theorem}(\textbf{Minimax Rate, Sparse Case})
        \label{thm:non_linear_unstructured_rate}
        Let $r>\sigma_g> D/p_g$ and $p_d'\geq p_g$. Then,
        \begin{align}
		    \mathcal{R} \left(n, \epsilon, B_{p_g,q_g}^{\sigma_g}, B_{p_d,q_d}^{\sigma_d} \right)
		    \sim \mathcal{R} \left( n, B_{p_g,q_g}^{\sigma_g}, B_{p_d,q_d}^{\sigma_d} \right)
		    + \epsilon
			+\epsilon^{\frac{\sigma_g+\sigma_d+D/p_d'-D/p_g}{\sigma_g-D/p_g+D}}
			\label{rate:non_linear}
	    \end{align}
    \end{theorem}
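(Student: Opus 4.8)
The plan is to establish matching (up to polylogarithmic factors) upper and lower bounds, with the three summands of \eqref{rate:non_linear} arising from three essentially decoupled mechanisms: the term $\mathcal{R}(n,\F_g,\F_d)$ is the price of estimating from $n$ samples and is inherited almost verbatim from \citet{uppal2019nonparametric}; the $\epsilon$ term is the price of a constant-scale perturbation of $p$ by the contamination; and the term $\epsilon^{(\sigma_g+\sigma_d+D/p_d'-D/p_g)/(\sigma_g-D/p_g+D)}$ is the price of a contamination \emph{spike} placed at a critical resolution. Throughout I will use that, for a Besov discriminator class $\F_d=B^{\sigma_d}_{p_d,q_d}(L_d)$, the IPM in \eqref{eq:IPM} is equivalent up to the constant $L_d$ to a dual (negative-smoothness) Besov norm, $d_{\F_d}(\mu,\nu)\asymp\norm{\mu-\nu}_{B^{-\sigma_d}_{p_d',q_d'}}$, so that both the estimation error and the lower-bound separation can be computed coordinatewise in the wavelet basis.

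\textbf{Upper bound.} I analyze the scaled thresholding estimator $\hat p=\tfrac{1}{1-\epsilon}\hat p_n$, with $\hat p_n$ as in \eqref{eq:nonlinear_wavelet_estimator} run on the contaminated sample. Since $\hat\alpha_\phi,\hat\beta_\psi$ are unbiased for the coefficients of the mixture $f=(1-\epsilon)p+\epsilon g$, after rescaling one has $\E[\hat\beta_\psi]/(1-\epsilon)=\beta^p_\psi+\tfrac{\epsilon}{1-\epsilon}\beta^g_\psi$: the contamination enters exactly as an additional deterministic ``signal'' $\tfrac{\epsilon}{1-\epsilon}g$ superimposed on $p$. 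I then split $d_{\F_d}(\hat p,p)$ across resolution levels: (i) the tail $j>j_1$ contributes the approximation error $\norm{p-P_{j_1}p}_{B^{-\sigma_d}_{p_d',q_d'}}$, bounded using $p\in\F_g$; (ii) the unthresholded levels $j\le j_0$ contribute the usual stochastic fluctuation of order $\sqrt{2^{Dj}/n}$ plus the contamination bias $\tfrac{\epsilon}{1-\epsilon}$ times the low-level wavelet mass of $g$; (iii) the thresholded band $j_0<j\le j_1$ is handled by the standard four-case thresholding argument of \citet{donoho1996density} (true coefficient large/small $\times$ estimate above/below the threshold $t=c\sqrt{j/n}$), now applied to the shifted coefficients $\beta^p_\psi+\tfrac{\epsilon}{1-\epsilon}\beta^g_\psi$. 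The key point is that hard-thresholding makes the contamination self-limiting: a coefficient of $g$ can influence the estimate only if it exceeds $\tfrac{1-\epsilon}{\epsilon}t$ or sits at an already-retained level, and $\norm{g}_1\le 1$ sharply limits the $\ell^1$ wavelet mass of $g$ at each scale, so only $O(\mathrm{polylog})$-many coefficients per level can be inflated that much. Choosing $j_0,j_1$ as in \citet{uppal2019nonparametric} makes the $n$-dependent contributions reproduce $\mathcal{R}(n,\F_g,\F_d)$ exactly, while summing the $\epsilon$-dependent contributions over scales yields $\epsilon+\epsilon^{(\sigma_g+\sigma_d+D/p_d'-D/p_g)/(\sigma_g-D/p_g+D)}$; the hypothesis $\sigma_g>D/p_g$ is used both to make the rescaling harmless ($p$ is bounded) and to make these geometric sums converge.

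\textbf{Lower bound.} The term $\mathcal{R}(n,\F_g,\F_d)$ is immediate since $\M(0,\F_g)\subseteq\M(\epsilon,\F_g)$, so the uncontaminated lower bound of \citet{uppal2019nonparametric} applies. For the $\epsilon$-dependent terms I use Le Cam's two-point method with single-wavelet perturbations $h_j=a\,2^{Dj/2}\psi(2^{Dj}\cdot-k)$: taking $p_0$ a fixed density bounded below on a ball and $p_1=p_0+h_j$, there exist densities $g_0,g_1$ with $(1-\epsilon)p_0+\epsilon g_0=(1-\epsilon)p_1+\epsilon g_1$ precisely when $\norm{h_j}_1\lesssim\epsilon$; the two mixtures are then identical, so the minimax error is at least $\tfrac12 d_{\F_d}(p_0,p_1)\asymp a\,2^{j(-\sigma_d+D/2-D/p_d')}$. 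Feasibility requires both $a\lesssim\epsilon\,2^{Dj/2}$ (the mass constraint) and $a\lesssim 2^{-j(\sigma_g+D/2-D/p_g)}$ (so $p_1\in\F_g$); optimizing over $j$, the mass constraint binds for small $j$ and the Besov constraint for large $j$, and the two balance at $2^{j^\ast}\asymp\epsilon^{-1/(\sigma_g-D/p_g+D)}$, giving separation $\epsilon^{(\sigma_g+\sigma_d+D/p_d'-D/p_g)/(\sigma_g-D/p_g+D)}$, while $j=0$ gives separation $\asymp\epsilon$. The condition $p_d'\ge p_g$ (the ``Sparse'' regime) is exactly what guarantees that a single wavelet is the worst-case perturbation, so these two-point bounds are tight and no Assouad-type construction is needed.

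\textbf{Main obstacle.} The delicate step is the thresholded band in the upper bound: the quantity being thresholded, $\hat\beta_\psi$, has mean shifted by an \emph{adversarial} amount $\tfrac{\epsilon}{1-\epsilon}\beta^g_\psi$, so the usual concentration-plus-case-analysis must be redone while tracking how many coefficients the adversary can inflate and by how much, subject only to $\norm{g}_1$ (or, in the structured setting, $\norm{g}_\infty$) being controlled. Showing that the worst such $g$ is effectively a single spike at resolution $\asymp\epsilon^{-1/(\sigma_g-D/p_g+D)}$, and that it costs no more than the claimed rate, is the crux; the remainder is careful bookkeeping of geometric series in $2^j$, running parallel to \citet{uppal2019nonparametric} with $\epsilon$ now playing the role formerly played by the noise level $n^{-1/2}$.
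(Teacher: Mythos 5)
Your lower-bound argument is essentially the paper's: the uncontaminated rate carries over since $\M(0,\F_g)\subseteq\M(\epsilon,\F_g)$, and the two $\epsilon$-terms come from Le Cam two-point pairs in which a single-wavelet perturbation of $p$ is absorbed into the contamination, feasible exactly when its $\L^1$ norm is $\lesssim\epsilon$; your balancing $2^{j^*}\asymp\epsilon^{-1/(\sigma_g-D/p_g+D)}$ and the resulting separations match the paper's Appendix C construction.

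The upper bound, however, has a genuine gap. You take $j_0,j_1$ ``as in \citet{uppal2019nonparametric}'' (purely $n$-dependent) and argue that hard-thresholding makes the contamination ``self-limiting.'' That is not the mechanism, and as stated it fails when the discriminator is rough ($\sigma_d<D/p_d$, e.g.\ total variation). The contamination enters the retained coefficients as a \emph{deterministic bias} $\tfrac{\epsilon}{1-\epsilon}\beta^g_\psi$, not as noise: a single spike-like $g$ produces $|\beta^g_\psi|\asymp 2^{Dj/2}$ for $O(1)$ wavelets at \emph{every} level $j$, and these coefficients are retained whenever $\epsilon 2^{Dj/2}\gtrsim\sqrt{j/n}$, so their contribution to the loss is of order $\epsilon\sum_{j\le j_1}2^{-j(\sigma_d+D/2-D/p_d)}2^{Dj/2}\asymp\epsilon\,2^{j_1(D/p_d-\sigma_d)}$. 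With the uncontaminated choice of $j_1$ this grows with $n$ and overwhelms the claimed $\epsilon$-terms; thresholding cannot help (indeed the paper's Theorems 1 and 2 show linear and thresholded estimators have identical $\epsilon$-dependence). The paper's proof instead caps the top resolution, taking $2^{j_1}=\sqrt{n}^{1/(\sigma_g+D/2-D/p_g)}\wedge\epsilon^{-1/(\sigma_g+D-D/p_g)}$, and balances the misspecification error $\epsilon\,2^{j_1(D/p_d-\sigma_d)}$ against the bias $2^{-j_1(\sigma_g+\sigma_d+D/p_d'-D/p_g)}$, which is precisely what produces the exponent $\frac{\sigma_g+\sigma_d+D/p_d'-D/p_g}{\sigma_g-D/p_g+D}$. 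The $\epsilon$-dependent cap is also needed for a second reason your sketch omits: it guarantees $\epsilon\le 2^{-Dj}$ for all $j\le j_1$, which is what preserves the moment bound $\E_{(1-\epsilon)p+\epsilon g}|\psi(X)|^{p_d'}\lesssim 2^{Dj(p_d'/2-1)}$ under arbitrary compactly supported $g$, and hence the variance and large-deviation estimates used in the four-case thresholding analysis. Without the modified $j_1$ (or some substitute argument), your upper bound does not deliver the stated rate.
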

    On the other hand linear estimators are only able to achieve the following asymptotic rate. The proof is provided in section B.2 of the appendix. 
    \begin{theorem}(\textbf{Linear Minimax Rate, Sparse Case})
        \label{thm:linear_unstructured_rate}
        Let $r>\sigma_g> D/p_g$ and $p_d'\geq p_g$. Then,
		\begin{align}
			\mathcal{R}_L \left(n, \epsilon, B_{p_g,q_g}^{\sigma_g}, B_{p_d,q_d}^{\sigma_d} \right)
			\sim \mathcal{R}_L \left(n, B_{p_g,q_g}^{\sigma_g}, B_{p_d,q_d}^{\sigma_d} \right)
			+ \epsilon
			+ \epsilon^{\frac{\sigma_g+\sigma_d+D/p_d'-D/p_g}{\sigma_g-D/p_g+D}}
		\end{align}
    \end{theorem}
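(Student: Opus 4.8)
The plan is to prove matching upper and lower bounds, with essentially all of the new work in the upper bound. For the upper bound I would analyze the re-scaled linear wavelet estimator $\frac{1}{1-\epsilon}\hat{p}_n$ (that is, \eqref{eq:nonlinear_wavelet_estimator} with $j_0=j_1$), leaving the single resolution $j_1$ to be chosen last, and I would dispose of the case $\epsilon$ bounded away from $1$ separately, since there the claimed rate is bounded below by a constant. Writing $f^* := (1-\epsilon)p+\epsilon g$ for the observed mixture density and $\Pi_{j_1}$ for $\L^2$-projection onto the span of the wavelets of resolution at most $j_1$, one has $\E_{f^*}[\hat{p}_n]=\Pi_{j_1}f^*$, and since $\frac{1}{1-\epsilon}\Pi_{j_1}f^*=\Pi_{j_1}p+\frac{\epsilon}{1-\epsilon}\Pi_{j_1}g$, the triangle inequality gives
\[
 d_{\F_d}\!\Big(\tfrac{1}{1-\epsilon}\hat{p}_n,\; p\Big)
 \;\le\; d_{\F_d}\!\Big(\tfrac{1}{1-\epsilon}\hat{p}_n,\; \tfrac{1}{1-\epsilon}\Pi_{j_1}f^*\Big)
 \;+\; d_{\F_d}(\Pi_{j_1}p,\; p)
 \;+\; \tfrac{\epsilon}{1-\epsilon}\, d_{\F_d}(\Pi_{j_1}g,\; 0).
\]
The first (stochastic) and second (truncation-bias) terms I would control exactly as in the uncontaminated linear analysis of \citet{uppal2019nonparametric}: the factor $\frac{1}{1-\epsilon}=O(1)$ is harmless, and in the sparse regime $p_d'\ge p_g$ Hölder's inequality on the wavelet expansion yields $d_{\F_d}(\Pi_{j_1}p,p)\lesssim 2^{-j_1\lambda}$ with $\lambda := \sigma_g+\sigma_d+D/p_d'-D/p_g>0$, so that the tradeoff of these two terms over $j_1$ reproduces the uncontaminated linear rate $\mathcal{R}_L(n,\F_g,\F_d)$ of \eqref{eq:linear_uncontaminated_minimax_rate}. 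For the contamination-bias term, Besov duality bounds $d_{\F_d}(\Pi_{j_1}g,0)\lesssim\norm{\Pi_{j_1}g}_{B^{-\sigma_d}_{p_d',q_d'}}$, and since $g$ is a density ($\norm{g}_1=1$) and the wavelets are compactly supported, $\norm{\beta_j^g}_{l^{p_d'}}\lesssim 2^{jD/2}$, giving $\norm{\Pi_{j_1}g}_{B^{-\sigma_d}_{p_d',q_d'}}\lesssim 1+2^{j_1\mu}$ (up to polylog factors) with $\mu := (D/p_d-\sigma_d)_+$ --- precisely the dependence on the resolution cutoff that appears in the proof of Theorem~\ref{thm:non_linear_unstructured_rate}. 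I would also note that $f^*$ may be unbounded, which inflates the variance of the $O(1)$ wavelet coefficients per level whose supports meet the bulk of $g$; since only $O(1)$ coefficients per level are affected, this adds only a term of order $\sqrt{\epsilon/n}\,2^{j_1\mu}$, which is $\lesssim\epsilon\,2^{j_1\mu}$ once $n\epsilon\gtrsim 1$.

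Collecting terms, the expected loss is $\lesssim\big(V_0(j_1)+2^{-j_1\lambda}\big)+\big(\epsilon+\epsilon\,2^{j_1\mu}\big)$, where $V_0(j_1)$ is the uncontaminated linear variance bound; here $V_0(j_1)$ and $2^{j_1\mu}$ are nondecreasing and $2^{-j_1\lambda}$ decreasing in $j_1$. Since $\min_{j_1}\big(V_0(j_1)+2^{-j_1\lambda}\big)\asymp\mathcal{R}_L(n,\F_g,\F_d)$ and $\min_{j_1}\big(2^{-j_1\lambda}+\epsilon\,2^{j_1\mu}\big)\asymp\epsilon+\epsilon^{\lambda/(\lambda+\mu)}$, a short monotonicity argument --- comparing the two minimizing resolutions $j^{(1)},j^{(2)}$ and using that evaluating the full sum at $\min\{j^{(1)},j^{(2)}\}$ costs at most the sum of the two separate minima --- produces a single $j_1$ for which the total is $\lesssim\mathcal{R}_L(n,\F_g,\F_d)+\epsilon+\epsilon^{\lambda/(\lambda+\mu)}$; and when $\mu>0$ one has $\lambda+\mu=\sigma_g-D/p_g+D$, so $\epsilon^{\lambda/(\lambda+\mu)}=\epsilon^{\frac{\sigma_g+\sigma_d+D/p_d'-D/p_g}{\sigma_g-D/p_g+D}}$ is exactly the stated term (when $\mu=0$ the contamination bias is merely $O(\epsilon)$ and the stated power term is subsumed by $+\epsilon$). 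For the lower bound I would observe that the minimax lower bound established for Theorem~\ref{thm:non_linear_unstructured_rate} is over \emph{all} estimators, hence a fortiori over linear ones: $\mathcal{R}_L(n,\epsilon,\F_g,\F_d)\ge\mathcal{R}(n,\epsilon,\F_g,\F_d)\gtrsim\mathcal{R}(n,\F_g,\F_d)+\epsilon+\epsilon^{\lambda/(\lambda+\mu)}$; since $\mathcal{R}$ and $\mathcal{R}_L$ share their first two terms (\eqref{eq:uncontaminated_minimax_rate}--\eqref{eq:linear_uncontaminated_minimax_rate}), only the third term of $\mathcal{R}_L$ remains, and that is supplied by the uncontaminated linear minimax lower bound of \citet{uppal2019nonparametric}, which persists under contamination upon restricting the supremum to mixtures $(1-\epsilon)p+\epsilon g_0$ with a single fixed density $g_0$ (the $\epsilon n$ samples from $g_0$ carry no information about $p$, shrinking the effective sample size by the harmless factor $1-\epsilon$). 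Adding the two lower bounds completes the proof.

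The step I expect to be the main obstacle is the joint optimization over $j_1$ in the upper bound: one must track three pieces with different monotonicities --- the linear variance $V_0(j_1)$, the truncation bias $2^{-j_1\lambda}$, and the combined ``contamination plus inflated variance'' piece of order $\big(\epsilon+\sqrt{\epsilon/n}\big)2^{j_1\mu}$ --- and verify that their optimum is exactly $\mathcal{R}_L(n,\F_g,\F_d)+\epsilon+\epsilon^{\lambda/(\lambda+\mu)}$ with no surviving cross-terms, including in the boundary cases ($\sigma_d$ near $D/p_d$, and $n\epsilon$ small, where the stated rate should still be matched up to polylog slack). This is the same bookkeeping as in the proof of Theorem~\ref{thm:non_linear_unstructured_rate}, but with the linear variance-bias tradeoff substituted for the thresholded one; by contrast, the lower bound is essentially immediate from the already-established results.
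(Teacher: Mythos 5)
Your proposal follows essentially the same route as the paper's proof: the rescaled linear wavelet estimator with the variance/bias/misspecification decomposition, the misspecification bound $\epsilon\bigl(1+2^{j_1(D/p_d-\sigma_d)}\bigr)$, the resolution chosen as the minimum of the $n$-optimal and $\epsilon$-optimal cutoffs (your two-minimizer comparison is exactly this, since $\lambda+\mu=\sigma_g+D-D/p_g$), and the lower bound inherited from the all-estimator bounds plus the uncontaminated linear bound transferred by fixing the contamination density. Two minor caveats: the justification that only $O(1)$ coefficients per level are affected by $g$ is inaccurate (there are $O(2^{Dj})$ such coefficients; the correct control uses $\sum_{\psi\in\Psi_j}\int_{\mathrm{supp}\,\psi}g\lesssim 1$, or simply that the chosen resolution satisfies $\epsilon 2^{Dj_1}\lesssim 1$ so the standard moment bound holds and no inflation term is needed, which is how the paper argues), and the case $p_d'=\infty$ (i.e.\ $p_d=1$) requires the separate sup-deviation variance argument the paper supplies, since the Rosenthal/$\ell^{p_d'}$ machinery implicitly assumes $p_d'<\infty$.
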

    % The above theorem is in part a direct consequence of Theorem 7 in \citet{uppal2019nonparametric} which shows that in the uncontaminated setting no linear estimator can achieve a rate faster than $\mathcal{R}_L \left( n, B_{p_g,q_g}^{\sigma_g}, B_{p_d,q_d}^{\sigma_d} \right)$.
    As is expected, the sub-optimality of linear estimators referred to in section~\ref{sec:related_work}, extends to the contaminated setting when contamination $\epsilon$ is small.
    % In particular, when contamination is small the thresholding wavelet estimator converges at the rate in the uncontaminated setting i.e. $\mathcal{R} \left( n, B_{p_g,q_g}^{\sigma_g}, B_{p_d,q_d}^{\sigma_d} \right)$ which is strictly faster than the corresponding linear rate $\mathcal{R}_L \left( n, B_{p_g,q_g}^{\sigma_g}, B_{p_d,q_d}^{\sigma_d} \right)$.
    However, if the contamination $\epsilon$ is large the distinction between linear and non-linear estimators disappears. More specifically, if  $\epsilon$ is large enough then both estimators converge at the same rate of $\epsilon+ \epsilon^{\frac{\sigma_g+\sigma_d+D/p_d'-D/p_g}{\sigma_g-D/p_g+D}}$.
    % Moreover, the rate $\mathcal{R} \left( n, B_{p_g,q_g}^{\sigma_g}, B_{p_d,q_d}^{\sigma_d} \right)$ (depending only on the number of samples $n$) is smaller for the wavelet thresholding estimator. Thus, the breakdown point (or the proportion of contamination that this estimator can tolerate while still converging the uncontaminated rate) of this estimator is smaller than that of its linear counterpart.
    
    \paragraph{Bounds for the regime $\boldsymbol{p_d'\leq p_g}$:}
    We note that the lower bounds that constitute the minimax rates above hold for all values of $p_g, p_d'\geq 1$. Furthermore, the linear wavelet estimator implies an upper bound (shown in section B.3 of the appendix) on the risk in the dense regime. Together, this gives, for all $r>\sigma_g>D/p_g$ and $p_d'\leq p_g$, 
    \begin{align}
        \label{rate:unstructured_dense_rate}
        \Delta(n)
        + \epsilon 
        + \epsilon^{\frac{\sigma_g+\sigma_d+D/p_d'-D/p_g}{\sigma_g-D/p_g+D}}
        \leq 
        \mathcal{R} \left( n,\epsilon, B_{p_g,q_g}^{\sigma_g}, B_{p_d,q_d}^{\sigma_d} \right) 
        \leq 
        \Delta(n)
        + \epsilon 
        + \epsilon^{\frac{\sigma_g+\sigma_d}{\sigma_g+D/p_d}}
    \end{align}
    where $\Delta(n)=\mathcal{R} \left( n, B_{p_g,q_g}^{\sigma_g}, B_{p_d,q_d}^{\sigma_d} \right)$.
    
    % \AU{Following paragraphs are somewhat unclear}
    One can check that, when the discriminator is sufficiently smooth (specifically, $\sigma_d\geq D/p_d$), the term $\epsilon^{\frac{\sigma_g+\sigma_d}{\sigma_g+D/p_d}}$ on the right-hand side of Eq.~\eqref{rate:unstructured_dense_rate} is dominated by $\epsilon$; hence, the lower and upper bounds in Eq.~\eqref{rate:unstructured_dense_rate} match and the thresholding wavelet estimator is minimax rate-optimal. When $\sigma_d < D/p_d$, a gap remains between our lower and upper bounds, and we do not know whether the thresholding wavelet estimator is optimal.
    % This shows that in settings in which any of the terms of $\Delta(n)$ or $\epsilon$ dominates the error, specifically, when $p_d'\geq p_g$ or $\sigma_d\geq D/p_d$, the wavelet thresholding series estimator is minimax rate-optimal. On the other hand, when the $\epsilon^{\frac{\sigma_g+\sigma_d}{\sigma_g+D/p_d}}$ term dominates (i.e. $p_d'\leq p_g$ and $\sigma_d< D/p_d$) the wavelet series estimators may fail to converge at the optimal rate. Thus, a gap remains between our upper and lower bounds in the case of discriminator functions with low smoothness ($\sigma_d < D/p_d$).
    % In this setting, either the upper bound or the lower bound is loose.
    The sample mean is generally well-known to be sensitive to outliers in the data, and a large amount of recent work \citep{lugosi2016risk,lerasle2018monk,minsker2019distributed,diakonikolas2019robust} has proposed estimators that might be better predictors of the mean in the case of contamination by outliers. Since the linear and thresholding wavelet estimators are both functions of the empirical means $\hat{\beta}_\psi$ of the wavelet basis functions, we conjecture that a density estimator based on a better estimate of the wavelet mean $\beta_\psi^p$ might be able to converge at a faster rate as $\epsilon \to 0$. We leave this investigation for future work.

\subsection{Structured Contamination}
    \label{sec:structured}
    In the previous section, we analyzed minimax rates without any assumptions on the outlier distribution. In certain settings, this may be an overly pessimistic contamination model, and the outlier distribution may in fact be somewhat well-behaved. In this section, we study the effects of assuming the contamination distribution $G$ has a density $g$ that is either bounded or smooth. Our results show that assuming boundedness of $g$ improves the dependence of the minimax rate on $\epsilon$ to order $\asymp \epsilon$, but assuming additional smoothness of $g$ does not further improve rates.
    
    As described in Section~\ref{sec:notation}, in this setting we consider a more general form of the minimax risk:
    %  \vspace{-1ex}
    \begin{equation}
        \mathcal{R} (n, \epsilon, \F_g, \F_c, \F_d )
        = \inf_{\hat{p}}\sup_{f\in \M(\epsilon, \F_g, \F_c)}\E_f [d_{\F_d}(\hat{p},p)]
        \label{def:structured_minimax_risk}
    \end{equation}
    The additional parameter $\F_c$ denotes the class of allowed contamination distributions.
    
    % We now explore how results look at the structured contamination setting. Here we assume that the contamination probability distribution not only has a density but that the density is sufficiently smooth i.e. it lives in a Besov space $B^{\sigma_c}_{p_c, q_c}$ where $\sigma_c> D/p_c$. We show that the additional structural assumption will imply a better rate than the one obtained above. 
    % We bound the risk of estimating the true density $p$ i.e. 
    % \[
    %     \mathcal{R}_n(\epsilon, \F_g, \F_c, \F_d )
    %     = \inf_{\hat{p}}\sup_{f\in \M(\epsilon, \F_g, \F_c)}\E_F d_{\F_d}(\hat{p},p)
    % \]
    % where $\M(\epsilon, \F_g, \F_c) = \{(1-\epsilon)p+\epsilon g: p\in \F_g, g\in \F_c\}$.
    
    % We first obtain the lower bound that shows that we can never do better than a linear dependence on the contamination proportion $\epsilon$. Given that the lower bound does not depend on the smoothness of the contamination we provide an upper bound on the risk when we assume only that the contamination density $g$ is bounded i.e. $\F_c = \L^\infty$ and compactly supported. This upper bound is tight when $p_d'\leq p_g$ or $\sigma_d\geq D$. This implies that under these conditions additional smoothness assumptions on $g$ do not provide faster convergence.
    We provide the following asymptotic rate for the above minimax risk that is achieved by an adaptive wavelet thresholding estimator with $2^{j_0} = n^{\frac{1}{2r+D}}$ and $2^{j_1} = (n/\log n)^{1/D}$. Recall here that $r$ is the regularity of the wavelets. Thus, for any $\sigma_g<r$, this estimator does not require the knowledge of $\sigma_g$.
    
    \begin{theorem}[Minimax Rate under Structured Contamination]
        \label{thm:structured_rate}
        Let $\sigma_g\geq D/p_g$, $\sigma_c > D/p_c$ and $\epsilon\leq 1/2$. Then, up to poly logarithmic factors of $n$,
        \begin{align}
    			\mathcal{R} \left( n, \epsilon,B_{p_g,q_g}^{\sigma_g}, B_{p_c,q_c}^{\sigma_c}, B_{p_d,q_d}^{\sigma_d} \right)
    			\asymp \mathcal{R} \left( n, \epsilon, B_{p_g,q_g}^{\sigma_g}, \L^\infty, B_{p_d,q_d}^{\sigma_d} \right)
    			\asymp \mathcal{R} \left( n, B_{p_g,q_g}^{\sigma_g}, B_{p_d,q_d}^{\sigma_d} \right)
    			+ \epsilon
	    \end{align}
    \end{theorem}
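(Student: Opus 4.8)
The plan is to prove the two $\asymp$ relations by a squeezing argument built from three facts: (i) $\mathcal{R}(n,\epsilon,\F_g,\F_c,\F_d)\le\mathcal{R}(n,\epsilon,\F_g,\L^\infty,\F_d)$, a trivial consequence of model inclusion; (ii) $\mathcal{R}(n,\epsilon,\F_g,\L^\infty,\F_d)\lesssim\mathcal{R}(n,\F_g,\F_d)+\epsilon$ (up to polylog), obtained by analyzing the scaled, $\sigma_g$-adaptive thresholding estimator; and (iii) $\mathcal{R}(n,\epsilon,\F_g,\F_c,\F_d)\gtrsim\mathcal{R}(n,\F_g,\F_d)+\epsilon$, obtained from a lower-bound construction in which the contamination is taken to be smooth. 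Chaining (i)--(iii) pins all three quantities to $\mathcal{R}(n,\F_g,\F_d)+\epsilon$. For (i), since $\sigma_c>D/p_c$ the Besov embedding $B^{\sigma_c}_{p_c,q_c}(L_c)\hookrightarrow\L^\infty$ gives $\M(\epsilon,\F_g,B^{\sigma_c}_{p_c,q_c})\subseteq\M(\epsilon,\F_g,\L^\infty)$ for a suitable $\L^\infty$-ball radius, and enlarging the contamination class cannot decrease the minimax risk; thus (here and below) ``$\L^\infty$'' denotes a fixed $\L^\infty$-ball of densities.

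For the upper bound (ii), let $h=(1-\epsilon)p+\epsilon g$ be the sampling density, let $\hat{h}_n$ be the thresholding estimator \eqref{eq:nonlinear_wavelet_estimator} with $2^{j_0}=n^{1/(2r+D)}$, $2^{j_1}=(n/\log n)^{1/D}$, and set $\hat{p}_n=\tfrac{1}{1-\epsilon}\hat{h}_n$ (a factor in $[1,2]$ since $\epsilon\le 1/2$). Extending $d_{\F_d}$ to signed measures as the same supremum, the triangle inequality gives
\[
    d_{\F_d}(\hat{p}_n,p)\;\le\;\tfrac{1}{1-\epsilon}\,d_{\F_d}(\hat{h}_n,h)\;+\;\tfrac{\epsilon}{1-\epsilon}\,d_{\F_d}(g,0).
\]
The second term is $\lesssim\epsilon$: by Besov duality, $d_{\F_d}(g,0)=\sup_{f\in\F_d}|\int fg|$ is bounded by a constant multiple of $\norm{g}_{p_d'}\le\norm{g}_\infty^{1-1/p_d'}\norm{g}_1^{1/p_d'}\lesssim 1$ (using $\L^{p_d'}\hookrightarrow B^{-\sigma_d}_{p_d',q_d'}$, that $g$ is a density, and that $\norm{g}_\infty$ is bounded by the $\L^\infty$-ball radius). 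For the first term I invoke the uncontaminated thresholding analysis of \citet{uppal2019nonparametric}: their oracle inequality reduces $\E\,d_{\F_d}(\hat{h}_n,h)$ to (a) a variance term governed by $\E_h[\psi(X)^2]\le\E_p[\psi(X)^2]+\epsilon\norm{g}_\infty$ (as $\psi$ is $\L^2$-normalized and $g$ is bounded), which reproduces the stochastic part of $\mathcal{R}(n,\F_g,\F_d)$ up to constants, and (b) the approximation error of the best thresholded wavelet series to $h$; writing $h=(1-\epsilon)p+\epsilon g$, this last quantity is at most $(1-\epsilon)$ times the corresponding quantity for $p$ (reproducing the bias part of $\mathcal{R}(n,\F_g,\F_d)$) plus $\epsilon\,d_{\F_d}(g,0)\lesssim\epsilon$. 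The threshold calibration $t=c\sqrt{j/n}$ and the oracle inequality depend only on the (contamination-free) variance bound and on $\norm{\psi_{jk}}_\infty$, so they carry over unchanged; and the above choice of $j_0,j_1$ is exactly the one that, by the adaptivity of wavelet thresholding \citep{donoho1996density,uppal2019nonparametric}, achieves the optimal rate simultaneously for all $\sigma_g<r$. Hence $\E\,d_{\F_d}(\hat{p}_n,p)\lesssim\mathcal{R}(n,\F_g,\F_d)+\epsilon$.

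For the lower bound (iii) I argue in two parts. For $\mathcal{R}(n,\epsilon,\F_g,\F_c,\F_d)\gtrsim\mathcal{R}(n,\F_g,\F_d)$, fix a single smooth density $g_0\in\F_c$ and restrict the adversary to $\{(1-\epsilon)p+\epsilon g_0:p\in\F_g\}$; by joint convexity of KL divergence (and the identity $d_{TV}((1-\epsilon)p+\epsilon g_0,(1-\epsilon)p'+\epsilon g_0)=(1-\epsilon)\,d_{TV}(p,p')$), mixing with $g_0$ only contracts the pairwise divergences among the instances used in the uncontaminated lower bound of \citet{uppal2019nonparametric} while leaving the loss $d_{\F_d}(\hat{p},p)$ unchanged, so the same Fano / Le Cam argument applies. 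For $\mathcal{R}(n,\epsilon,\F_g,\F_c,\F_d)\gtrsim\epsilon$, pick a fixed smooth, compactly supported $\xi\not\equiv 0$ with $\int\xi=0$ and $\sup_{f\in\F_d}|\int f\xi|>0$, a smooth density $p_0$ in the interior of $\F_g$, and a smooth density $g_0$ in the interior of $\F_c$ with $g_0\ge\norm{\xi}_\infty$ on $\mathrm{supp}\,\xi$. Setting $a=\min(\epsilon,\epsilon_0)$ for a small enough absolute constant $\epsilon_0$, let $p_1=p_0+a\xi$ and $g_1=g_0-\tfrac{1-\epsilon}{\epsilon}\,a\,\xi$; one checks that $p_0,p_1\in\F_g$ and $g_0,g_1\in\F_c$ are densities and that $(1-\epsilon)p_0+\epsilon g_0=(1-\epsilon)p_1+\epsilon g_1$, so the two parameter pairs induce identical sample distributions. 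Le Cam's two-point argument (with the two induced sample distributions coinciding) then gives $\mathcal{R}\ge\tfrac12 d_{\F_d}(p_0,p_1)=\tfrac{a}{2}\sup_{f\in\F_d}|\int f\xi|\asymp\epsilon$ for every $\epsilon\le 1/2$.

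The main obstacle is step (ii): transferring the uncontaminated thresholding analysis of \citet{uppal2019nonparametric} to samples whose empirical wavelet coefficients concentrate around those of $h$ rather than of $p$. The crucial observations that make this routine are that the nonlinear thresholding step is driven entirely by the (essentially contamination-independent, since $\norm{g}_\infty\lesssim 1$) variance calibration, and that the extra deterministic bias it introduces is absorbed by splitting $h$ into its smooth part $(1-\epsilon)p$ and a bounded part $\epsilon g$ whose total contribution to $d_{\F_d}$ is $\lesssim\epsilon$ by Besov duality; once this separation is made precise, the remainder is bookkeeping. Note that the contamination smoothness $\sigma_c$ never enters: we never estimate $g$, only account for its $O(\epsilon)$ effect, for which boundedness alone suffices --- this is precisely why $\mathcal{R}(n,\epsilon,\F_g,\F_c,\F_d)\asymp\mathcal{R}(n,\epsilon,\F_g,\L^\infty,\F_d)$.
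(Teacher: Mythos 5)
Your proposal is correct and follows essentially the same route as the paper: a scaled wavelet thresholding estimator whose variance, bias, and misspecification terms are controlled using only boundedness of $g$ (which is why $\sigma_c$ never enters beyond the embedding $B^{\sigma_c}_{p_c,q_c}\subseteq\L^\infty$), combined with a transfer of the uncontaminated Fano lower bound by making the contamination inert and a two-point Le Cam argument with identical mixtures for the $\epsilon$ term. The only cosmetic differences are that the paper makes the contaminated KL divergences \emph{exactly} equal to the uncontaminated ones (by taking $g=p_0$ and rescaling the perturbation by $\tfrac{1}{1-\epsilon}$) rather than invoking joint convexity, and it carries out the nonlinear-thresholding bookkeeping you defer to explicitly (its Lemma bounding the thresholded terms with the contaminated coefficients $(1-\epsilon)\beta^p_\psi+\epsilon\beta^g_\psi$), which matches the mechanism you describe.
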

    
    % \subsection{Lower Bound}
    % \label{sec:lower_bound}
    % We first provide lower bounds on the minimax rate~\eqref{def:structured_minimax_risk}.
    % \AU{We demonstrate this lower bound by the usual method of constructing a finite set of ``worst-case'' densities and showing that no estimator can simultaneously estimate these densities well.}\AU{Can remove this.}
    % As in the unstructured case above (and in \cite{donoho1996density}, \cite{uppal2019nonparametric} etc.), we expect the rates to be tight in the setting $p_d'\leq p_g$ or $\sigma_d\geq D$, where linear estimators are optimal.
    
    % \begin{theorem}(Lower bound)\\
    %     Let $\sigma_g\geq D/p_g$, $\sigma_c\geq D/p_c$ and $\epsilon\leq 1/2$. Then, up to poly logarithmic factors of $n$,
    %     \begin{align*}
    % 			\mathcal{R} \left( n, \epsilon,B_{p_g,q_g}^{\sigma_g}, B_{p_c,q_c}^{\sigma_c}, B_{p_d,q_d}^{\sigma_d} \right)
    % 			\gtrsim \mathcal{R} \left( n, B_{p_g,q_g}^{\sigma_g}, B_{p_d,q_d}^{\sigma_d} \right)
    % 			+\epsilon
	   % \end{align*}
    %     % where $C$ only depends on $L_g,L_c, L_d$.
    %     \label{thm:linear_structured_lower_bound}
    % \end{theorem}
    %  \vspace{-3ex}
    The right-most term is simply $\epsilon$ plus the rate in the absence of contamination.    
    The left two terms are the rates when the contamination density lies, respectively, in the Besov space $B_{p_c,q_c}^{\sigma_c}$ and the space $\L^{\infty}$ of essentially bounded densities.
    In particular, these rates are identical when $\sigma_c > D/p_c$. One can check (see Lemma 10 in the Appendix) that, if $\sigma_c > D/p_c$, then $B_{p_c,q_c}^{\sigma_c} \subseteq \L^\infty$. Hence, Theorem~\ref{thm:structured_rate} shows that assuming boundedness of the contamination density improves the dependence on $\epsilon$ (compared to unstructured rates from the previous section), but that additional smoothness assumptions do not help. 

    In section B.1 of the appendix we first provided a proof of the upper bound using the classical wavelet thresholding estimator and then show the optimality of the adaptive version in section B.4.   
\subsection{Examples}
\label{sec:examples}
    
    Here, we summarize the implications of our main results for robust density estimation in a few specific examples, allowing us to directly compare with previous results.

        The case $p_d = q_d = \infty$,
        % We refer to the loss $d_{B_{p_d,q_d}^{\sigma_d}}$ as ``perfectly homogeneous'', because it weights errors relatively uniformly over the sample space.
        includes, as examples,
        the total variation loss $d_{B_{p_d,q_d}^0}$ ($\sigma_d=0$, \cite{rudin2006real}) and
        the Wasserstein (a.k.a., Kantorovich-Rubinstein or earthmover) loss $d_{B_{p_d,q_d}^1}$ ($\sigma_d=1$ \citep{villani2008optimal}). Under these losses, the wavelet thresholding estimator is robustly minimax optimal, in both the arbitrary and structured contamination settings (note that here $\sigma_d\geq D/p_d=0$).
        In particular, in the case of unstructured contamination, this generalizes the results of \citet{chen2016general} for total variation loss to a range of other losses and smoothness assumptions on $p$.
        
        Analogously, in the case $p_g = q_g = \infty$, the data distribution is itself $\sigma_g$-H\"older continuous, since the Besov space $B_{p_g,q_g}^{\sigma_g} = \C^{\sigma_g}$ is equivalent to the space of $\sigma_g$-H\"older continuous functions. In this setting, the linear wavelet estimator is robustly minimax optimal under any Besov IPM loss when contamination is structured, or under sufficiently smooth Besov IPM losses (with $\sigma_d\geq D/p_d$) when the contamination is unstructured.

%         \AU{Here.....}
% %-----------------------------
        One can also use our results to calculate the sensitivity of a given estimator to the proportion $\epsilon$ of outlier samples.
        In the terminology of robust statistics, this is quantified by the ``asymptotic breakdown point'' (i.e., the maximum proportion $\epsilon$ of outlier samples such that the estimator can still converge at the uncontaminated optimal rate). Figure~\ref{fig:breakdown_rates} illustrates the asymptotic breakdown point, in the case $p_d = 1$, as a function of the discriminator smoothness $\sigma_d$. For sufficiently smooth losses (large $\sigma_d$, the estimator can tolerate a large number ($O(\sqrt{n})$) of arbitrary outliers before performance begins to degrade, whereas, for stronger losses (smaller $\sigma_d$), the estimator becomes more sensitive to outliers.
% We consider the class of densities in a Sobolev space (i.e. $p_g=q_g=2$) and the set of IPMs with $p_d'\geq 2$ but not too large. This includes the $\L^\infty$ loss and the KS distance. For this setting we illustrate in figure~\ref{fig:breakdown} the breakdown point of the two estimators as a function of the strength of the IPM $d_{B^{\sigma_d}_{p_d,q_d}}$ (smaller $\sigma_d$ implies a stronger loss).
        % \begin{figure}
        % \includegraphics[width=\linewidth]{}
        % \caption{Breakdown Points of Linear \& Non-linear Estimators for Sobolev Densities ($p_g=2$)}
        % \label{fig:breakdown}
        % \end{figure}
        % \begin{figure}
        %     \centering
        %     \includegraphics[width=0.9\linewidth]{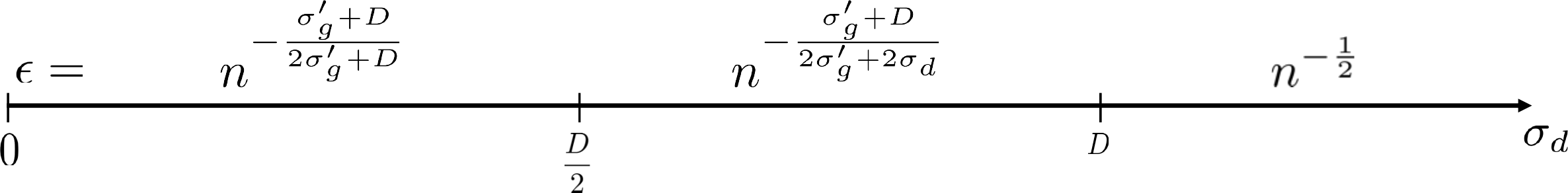}
        %     \caption{Asymptotic breakdown point as a function of $\sigma_d$, in the case $p_d = 1$; this includes as special cases the $\L^\infty$ and Kolmogorov-Smirnov losses.}
        %     \label{fig:breakdown}
        % \end{figure}
    \begin{figure}
        \centering
        \begin{subfigure}[b]{0.65\textwidth}
            \centering
            \includegraphics[width=\textwidth]{breakdownpt.pdf}
            \vspace{8mm}
            % \caption{}
            \label{subfig:breakdown_rate}
        \end{subfigure}
        \hfill
        \begin{subfigure}[b]{0.32\textwidth}
            \centering
            \includegraphics[width=\textwidth]{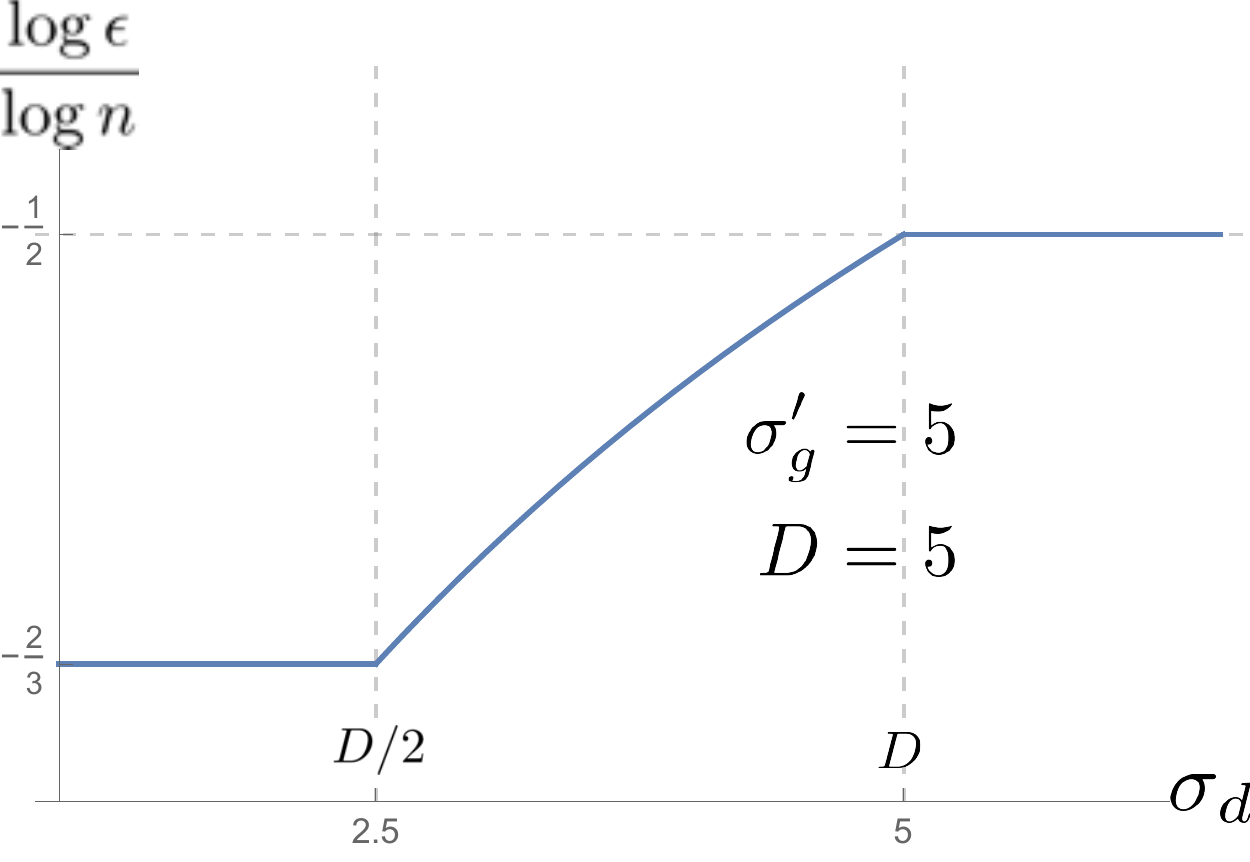}
            % \caption{$y=3sinx$}
            \label{subfig:plot_of_rate}
        \end{subfigure}
        \caption{Asymptotic breakdown point as a function of $\sigma_d$, in the case $p_d = 1$; this includes as special cases the $\L^\infty$ and Kolmogorov-Smirnov losses.}
        \label{fig:breakdown_rates}
    \end{figure}

\section{Robustness of Generative Adversarial Networks}
\label{sec:GANs}

    % Superficially, GANs appear distinct from classical density estimators because, rather than an estimate of the probability density at a query point in the sample space, GANs generate novel samples according to the learned density (``implicit generative modeling''~\citep{mohamed2016learning}). 
    \citet{singh2018adversarial} showed (in their Theorem 9) that the problems of generating novel samples from a training density (also called ``implicit generative modeling''~\citep{mohamed2016learning}) and of estimating the training density are equivalent in terms of statistical minimax rates. Based on this result, and an oracle inequality of \citet{liang2018well}, several recent works \citep{liu2017approximationInGANs,liang2018well,singh2018adversarial,uppal2019nonparametric} have studied a statistical formulation of GANs as a distribution estimate based on empirical risk minimization (ERM) under an IPM loss. This formulation is as follows. Given a GAN with a discriminator neural network $N_d$ encoding functions in $\F$ and a generator neural network $N_g$ encoding distributions in $\P$, the GAN generator can be viewed as the distribution $\hat P$ satisfying:
    \begin{equation}
        \hat{P}  = \inf_{P\in \P} d_{\F}(P, \tilde{P}_n)
        % & = \inf_{P\in \P} \sup_{f\in \F}\left|\E_{X\sim P}f(X)- \E_{X\sim \hat{P}_n} f(X)\right|
        \label{eq:GAN_estimate}
    \end{equation}
    While $\tilde{P}_n$ can be taken to be the empirical distribution $\frac{1}{n} \sum_{i = 1}^n \delta_{X_i}$, these theoretical works have shown that convergence rates can be improved by applying regularization (e.g., in the form of smoothing the empirical distribution), consistent with the ``instance noise trick'' \citep{sonderby2016amortised}, a technique that is popular in practical GAN training and is mathematically equivalent to kernel smoothing. Here, we extend these results to the contamination setting and show that the wavelet thresholding estimator can be used to construct a GAN estimate that is robustly minimax optimal.
    
    % We first look at the definition of the ReLU network architecture that is provided in \cite{suzuki2018adaptivity} to define the networks $N_g$ and $N_d$. The underlying architecture is simply a network of fully connected layers of ReLU activation functions. 
    
    % \begin{definition}
    % A \emph{fully-connected ReLU network} $f_{(A_1,...,A_H),(b_1,...,b_H)} : \R^W \to \R$ has the form
    % \[ 
    %     A_H \eta \left( A_{H-1} \eta \left( \cdots \eta(A_1 x + b_1) \cdots \right) + b_{H - 1} \right) + b_H,
    % \]
    % where, for each $\ell \in [H - 1]$, $A_\ell \in \R^{W \times W}$, and $A_H \in \R^{1 \times W}$ and the ReLU operation $\eta(x) = \max\{x,0\}$ is applied element-wise to vectors in $\R^W$.
    % \end{definition}
    % The size of $f_{(A_1,...,A_H),(b_1,...,b_H)}(x)$ can be measured in terms of the following four (hyper)parameters:  the \emph{depth} $H$, the \emph{width} $W$, the \emph{sparsity} $S := \sum_{\ell \in [H]} \|A_\ell\|_{0,0} + \|b_\ell\|_0$ (i.e., the total number of non-zero weights), and the \emph{maximum weight} $B := \max \{\|A_\ell\|_{\infty,\infty}, \|b_\ell\|_\infty : \ell \in [H]\}$.
    % For given size parameters $H,W,S,B$ we write $\Phi(H,W,S,B)$ to denote the set of functions satisfying the corresponding size constraints.
    
    \citet{suzuki2018adaptivity}  (in section 3) showed that there is a fully connected ReLU network with depth at most logarithmic in $1/\delta$ and other size parameters at most polynomial in $1/\delta$ that can $\delta$-approximate any sufficiently smooth Besov function class (e.g. $B^{\sigma_g}_{p_g,q_g}$ with $\sigma_g\geq D/p_g$). This was used in \citet{uppal2019nonparametric} to show that, for large enough network sizes, the perfectly optimized GAN estimate (of the form of Eq.~\eqref{eq:GAN_estimate}) converges at the same rate as the estimator $\hat{p}$ used to generate it.
%     \begin{theorem}[Theorem 9 of \cite{uppal2019nonparametric}]
%     \label{thm:GAN_upper_bound}
%     Fix a Besov density class $B_{p_g,q_g}^{\sigma_g}$
%     with $\sigma_g > D/p_g$ and discriminator class $B_{p_d,q_d}^{\sigma_d}$ with $\sigma_d>D/p_d$. Then, for any desired approximation error $\epsilon' > 0$, one can construct a GAN $\hat p$ of the form~\eqref{eq:GAN_estimate} (with $\tilde{p}_n$ the linear wavelet estimator)  with discriminator network $N_d \in \Phi(H_d,W_d,S_d,B_d)$ and generator network $N_g \in \Phi(H_g,W_g,S_g,B_g)$, s.t. for all $p \in B_{p_g,q_g}^{\sigma_g}$
%     \begin{align*}
%         \E \left[ d_{B_{p_d,q_d}^{\sigma_d}} \left( \hat{p}, p \right) \right] \lesssim
%         \epsilon' + 
%         \E
%         d_{B^{\sigma_d}_{p_d,q_d}}(\tilde{p}_n,p)
%         \end{align*}
%   where $H_d$, $H_g$ grow logarithmically with $1/\epsilon'$, and $W_d,S_d,B_d,W_g,S_g$, $B_g$ grow polynomially with $1/\epsilon'$.
% \end{theorem}
    % A direct consequence of this is that we can always construct a GAN of the form~\eqref{eq:GAN_estimate} that converges at the same rate as the estimator $\tilde{P}_n$. 
    So if we let the approximation error of the generator and discriminator network be at most the convergence rate (from Theorem \ref{thm:non_linear_unstructured_rate} or Theorem~\ref{thm:structured_rate}) of the wavelet thresholding estimator then there is a GAN estimate $\hat{P}$ that converges at the same rate and is therefore robustly minimax optimal.
    In particular, we have the following corollary:
    \begin{corollary}
        Given a Besov density class $B_{p_g,q_g}^{\sigma_g}$
    with $\sigma_g > D/p_g$ and discriminator class $B_{p_d,q_d}^{\sigma_d}$ with $\sigma_d>D/p_d$,
        there is a GAN estimate $\hat{P}$ with discriminator and generator networks of depth at most logarithmic in $n$, or $1/\delta$ and other size parameters at most polynomial in $n$ or $1/\delta$ such that
        \begin{align}
        \sup_{p\in \M\left( \epsilon, B^{\sigma_g}_{p_g,q_g} \right)} \E \left[ d_{B^{\sigma_d}_{p_d,q_d}}(\hat{p},p) \right]
        \leq
			\delta
			+ \mathcal{R} \left( n, B_{p_g,q_g}^{\sigma_g}, B_{p_d,q_d}^{\sigma_d} \right)
% 			+\frac{1}{\sqrt{n}}
% 			+n^{-\frac{\sigma_g+\sigma_d}{2\sigma_g+D}}
% 			+n^{-\frac{\sigma_g+\sigma_d+D/p_d'-D/p_g}{2\sigma_g-2D/p_g+D}}
			+\epsilon
		\end{align}
    \end{corollary}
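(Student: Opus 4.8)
The plan is to combine three ingredients: an oracle inequality bounding the error of the perfectly optimized GAN estimate $\hat P$ of Eq.~\eqref{eq:GAN_estimate} by the error of the base estimate fed into it plus the approximation errors of the two networks; the ReLU approximation guarantees of \citet{suzuki2018adaptivity} for Besov classes (as used in \citet{uppal2019nonparametric}); and the contaminated-data rate for the wavelet thresholding estimator from Theorem~\ref{thm:non_linear_unstructured_rate} together with the dense-regime bound in \eqref{rate:unstructured_dense_rate}. Throughout I take $\tilde P_n = \hat p_n$ to be the (scaled) wavelet thresholding estimator of Eq.~\eqref{eq:nonlinear_wavelet_estimator} applied to the contaminated sample, I let $\F$ be the function class realized by the discriminator network $N_d$, and I let $\P$ be the distribution class realized by the generator network $N_g$; write $B := B^{\sigma_d}_{p_d,q_d}$ for brevity.

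First I would record the approximation facts. Since $\sigma_g > D/p_g$, \citet{suzuki2018adaptivity} gives a generator network of depth $O(\log(1/\delta))$ and size $\mathrm{poly}(1/\delta)$ whose induced class $\P$ contains a distribution $P^*$ with $d_B(P^*,p) \le \delta/4$ (the Besov embedding into $C^0$, valid because $\sigma_g > D/p_g$, lets an $\L^\infty$ approximation of $p$ control the IPM). Since $\sigma_d > D/p_d$, a discriminator network of the same order of size yields a class $\F$ that is at once rich enough to $(\delta/8)$-approximate in sup-norm every function in the unit ball of $B$ and restricted enough to lie in a Besov ball of the same smoothness and comparable radius; these two directions together give $|d_\F(P,Q)-d_B(P,Q)| \le \delta/4$ for all probability measures $P,Q$, with the Besov-radius constant absorbed into the rate. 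I would then unwind the oracle inequality: from the defining optimality $d_\F(\hat P,\hat p_n) \le d_\F(P^*,\hat p_n)$, the triangle inequality for $d_\F$, and transferring back and forth between $d_\F$ and $d_B$,
\begin{equation}
  d_B(\hat P, p) \;\le\; 2\, d_B(\hat p_n, p) + \delta,
  \label{eq:ganoracle}
\end{equation}
where the factor $2$ is the usual ``ERM pays twice the best'' constant and is harmless since it is absorbed into the rate notation.

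Next I would take expectations in \eqref{eq:ganoracle}. Because \eqref{eq:ganoracle} is deterministic, contamination enters only through $\E_f[d_B(\hat p_n,p)]$, which is controlled uniformly over $f \in \M(\epsilon, B^{\sigma_g}_{p_g,q_g})$ by Theorem~\ref{thm:non_linear_unstructured_rate} when $p_d' \ge p_g$ and by the upper bound in \eqref{rate:unstructured_dense_rate} when $p_d' \le p_g$. In either case the bound equals $\mathcal{R}(n, B^{\sigma_g}_{p_g,q_g}, B^{\sigma_d}_{p_d,q_d}) + \epsilon$ plus an $\epsilon$-power term, and a one-line check shows the relevant exponent ($\frac{\sigma_g+\sigma_d+D/p_d'-D/p_g}{\sigma_g-D/p_g+D}$ in the sparse regime, $\frac{\sigma_g+\sigma_d}{\sigma_g+D/p_d}$ in the dense regime) exceeds the numerator minus denominator $=\sigma_d - D/p_d \ge 0$, hence is $\ge 1$, so that extra term is dominated by $\epsilon$. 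Substituting into \eqref{eq:ganoracle} and rescaling $\delta$ by the absorbed constant gives
\begin{equation*}
  \sup_{p\in\M(\epsilon, B^{\sigma_g}_{p_g,q_g})} \E\!\left[ d_{B^{\sigma_d}_{p_d,q_d}}(\hat p, p) \right] \;\lesssim\; \delta + \mathcal{R}\!\left(n, B^{\sigma_g}_{p_g,q_g}, B^{\sigma_d}_{p_d,q_d}\right) + \epsilon,
\end{equation*}
up to the polylogarithmic factors already implicit in Theorem~\ref{thm:non_linear_unstructured_rate}, which is the claimed bound.

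I expect the main obstacle to be the discriminator half of the oracle inequality: one needs $\F$ to be simultaneously dense (in sup-norm) in the unit ball of $B^{\sigma_d}_{p_d,q_d}$ yet contained in a slightly enlarged ball of the same space, so that $\sup_{f\in\F}$ neither undershoots nor overshoots $\sup_{f\in B^{\sigma_d}_{p_d,q_d}}$ by more than $O(\delta)$, and one must verify that \citet{suzuki2018adaptivity}'s construction can be taken to meet both requirements with depth logarithmic and size polynomial in $1/\delta$ (hence in $n$, once $\delta$ is set to the order of the target rate). The hypotheses $\sigma_g > D/p_g$ and $\sigma_d > D/p_d$ are precisely the Besov embeddings into $C^0$ that make both this approximation step and the reduction of the IPM gap to a sup-norm error valid; the remainder of the argument is bookkeeping with triangle inequalities and the already-established minimax rate.
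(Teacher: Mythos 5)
Your proposal is correct and follows essentially the same route the paper sketches for this corollary: the ERM/oracle inequality for the GAN estimate built on the wavelet (thresholding or linear) base estimator, Suzuki's ReLU approximation of Besov classes with logarithmic depth and polynomial size, the contaminated upper bounds from Theorem~\ref{thm:non_linear_unstructured_rate} and Eq.~\eqref{rate:unstructured_dense_rate}, and the observation that $\sigma_d > D/p_d$ makes the extra $\epsilon$-power terms dominated by $\epsilon$. The only quibbles are cosmetic: the sentence about the exponent check should read that the numerator exceeds the denominator by $\sigma_d - D/p_d \geq 0$ (hence the exponent is $\geq 1$), and in the dense regime $p_d' \leq p_g$ the bound \eqref{rate:unstructured_dense_rate} is obtained via the linear wavelet estimator, so the base estimator should be taken accordingly there.
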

    % This is the same as the convergence rate of the wavelet thresholding estimator when the contamination is arbitrary and the discriminator is smooth enough ($\sigma_d\geq D/p_d$).
    
    Since Besov spaces of compactly supported densities are nested ($B^{\sigma}_{p,q}\subseteq B^{\sigma'}_{p,q}$ for all $\sigma'\leq \sigma$), to approximate $B^{\sigma}_{p,q}$ for any $\sigma\geq r_0$ it is sufficient to approximate $B^{r_0}_{p,q}$. We can use this approximation network along with the adaptive wavelet thresholding estimator to construct a GAN estimate of the form of Eq.~\eqref{eq:GAN_estimate}. Then under structured contamination this GAN estimate is minimax optimal for any density of smoothness $\sigma_g\in [r_0,r]$ and does not require explicit knowledge of $\sigma_g$. Thus, it is adaptive.

    % \SS{Restate this in an adaptive way; i.e., assuming bounds on $\sigma_g$, $\hat p$ doesn't depend on $\sigma_g$.}
    
    % \SS{Cutting this for now to save space.}
    % This also implies that when $\sigma_d\geq D/p_d$ and the contamination is below the breakdown point of the wavelet thresholding estimator,
    % (as illustrated by an example in section \ref{sec:examples}), we can construct a GAN estimate with large enough networks that converges at the minimax optimal rate of the uncontaminated setting i.e. 
    % $n^{-1/2}+n^{-\frac{\sigma_g+\sigma_d}{2\sigma_g+D}}+n^{-\frac{\sigma_g+\sigma_d+D/p_d'-D/p_g}{2\sigma_g-2D/p_g+D}}$.
    
    % The construction in \citet{suzuki2018adaptivity} for accurately approximating the class of discriminator functions by a neural network only holds when this class is sufficiently smooth (i.e. $\sigma_d\geq D/p_d$). Intuitively, this condition is essential because any neural network with only ReLU activation functions is essentially a class of functions with finitely many linear components. Approximating functions that are unbounded (such as those contained in $B^{\sigma_d}_{p_d,q_d}$ with $\sigma_d<D/p_d$) with such networks should be more challenging. Explicitly, how this would affect the grown of the depth and size of the neural networks needed would be interesting to know.
    % \vspace{-0.25ex}
\section{Conclusion}
    In this paper, we studied a variant of nonparametric density estimation in which a proportion of the data are contaminated by random outliers. For this problem, we provided bounds on the risks of both linear and nonlinear wavelet estimators, as well as general minimax rates.
    % We also showed how to calculate the breakdown point of linear and nonlinear estimators in specific examples.
    The main conclusions of our study are as follows:
    \begin{enumerate}[wide,topsep=0pt,labelindent=0pt]
        \item
        The classical wavelet thresholding estimator originally proposed by \citet{donoho1996density}, which is widely known to be optimal for uncontaminated nonparametric density estimation, continues to be, in many settings, minimax optimal in the presence of contamination.
        \item Imposing a simple structural assumption, such as bounded contamination, can significantly alter how contamination affects estimation risk. At the same time, additional smoothness assumptions have no effect. This contrasts from the case of estimating a density at a point, as studied by \citet{liu2017density} where the minimax rates get better with smoothness of the contamination density.
        \item Linear estimators, exhibit optimal dependence on the contamination proportion, despite having sub-optimal risk with respect to the sample size. Hence, the difference between linear and nonlinear models diminishes in the presence of significant contamination.
        \item For sufficiently smooth density and discriminator class, a fully-connected GAN architecture with ReLU activations can learn the distribution of the training data at the optimal rate, both (a) in the presence of contamination and (b) when the true smoothness of the density is not known.
    \end{enumerate}
    
    Our results both extend recent results on nonparametric density estimation under IPM losses~\citep{liang2018well,singh2018adversarial,uppal2019nonparametric} to the contaminated and adaptive settings and expand the study of nonparametric density estimation under contamination~\citep{chen2016general,liu2017density} to Besov densities and IPM losses.

\section*{Broader Impact}

Since this work is of a theoretical nature, it is unlikely to disadvantage anyone or otherwise have significant negative consequences. One of the main contributions of this paper is to quantify the potential effects of misspecification biases on density estimation. Hence, the results in this paper may help researchers understand the potential effects of misspecification biases that can arise when invalid assumptions are made about the nature of the data generating process.

\begin{ack}
The authors thank anonymous reviewers for the feedback on improving this paper. The authors declare no competing interests. This work was supported by National Science Foundation award number DGE1745016, a grant from JPMorgan Chase Bank, and a grant from the Lockheed Martin Corporation.
\end{ack}

\bibliographystyle{plainnat}
\bibliography{ref}

\newpage
\appendix
  
\section{Set up}
    Besov spaces rely on the notion of an $r$-regular multi-resolution approximation (MRA) of $\L_2(\R^D)$. In particular, the father wavelet of the wavelet basis used to define Besov spaces generates an MRA of $\L_2(\R^D)$. 
    
    The goal of a MRA is to efficiently approximate spatially varying smoothness. \citet{hardle2012wavelets} explains it as a formulation that makes mathematically precise the intuitive idea of partitioning the domain and applying Fourier analysis to each piece. 
    % A MRA is a sequence of spaces generated by functions supported on shrinking domains. This sequence is generated by integer translates and horizontal scalings of the given mother($\psi$) and father wavelet($\phi$). A union of these spaces is then an orthonormal basis (ONB) of $\L^2(\R^D)$. 
    
    Here we formally define an $r$-regular multi-resolution approximation. 
    \begin{definition}
        A \emph{multiresolution approximation (MRA)} of $\L^2(\R^D)$ is a nested sequence $\{V_j\}_{j\in \Z}$ of closed linear subspaces of $L^2(\R^D)$ such that:
        \begin{enumerate}[wide,noitemsep]
            \item 
                $\bigcap_{j=-\infty}^\infty V_j = \{0\}$, and $\bigcup_{j=-\infty}^\infty V_j$ is dense in  $\L^2(\R^D)$.
            \item 
                For every $f\in \L^2(\R^D)$ and $k\in \Z^D$, 
                    $f(x) \in V_0$ if and only if $f(x-k)\in V_0$.
            \item 
                For every $f\in \L^2(\R^D)$ and $j\in \Z$, $f(x) \in V_j$ if and only if $f(2x)\in V_{j+1}$
            \item 
                There is a ``father wavelet'' such that $\phi\in V_0$, $\{\phi(x-k) : k\in \Z^D\}$ is an orthonormal basis of $V_0 \subset \L^2(\R^D)$.
        \end{enumerate}
    \end{definition}
    Given a father wavelet that generates a multi-resolution approximation, there exist ``mother'' wavelets with the following properties.
    
    \begin{lemma}[\citep{meyer1992wavelets}, Section 3.9]
    Let $\{V_j\}_{j \in \Z}$ be an MRA of $\L^2(\R^D)$ with father wavelet $\phi$.
    , and let $W_j$ be the orthogonal complement of $V_j$ in $V_{j+1}$.
    Then, for $E = \{0,1\}^D\setminus (0,\dots,0)$, there exist ``mother wavelets'' $\{\psi_\epsilon\}_{\epsilon \in E}$ such that
        \begin{enumerate}[noitemsep,topsep=0pt]
            \item 
                $\psi_\epsilon$ is rapidly decreasing for every multi-index $\alpha$ with $|\alpha|\leq r$ and every $\epsilon\in E$.
            \item 
                 The set $\{\psi(x-k)\}_{\epsilon\in E,k\in \Z}$ is an orthonormal basis of $W_j$.
            \item 
                For all $\alpha$ with $|\alpha|\leq r$ and $\epsilon\in E$,
                $\int x^\alpha \psi_\epsilon(x)dx = 0$.
        \end{enumerate}
    Moreover, $\{2^{D j/2}\psi_\epsilon(2^jx-k) : \epsilon\in E, k\in \Z^D\} \cup \{2^{D j/2}\phi(2^jx-k) : k\in \Z^D\}$ is an orthonormal basis of $V_j \subseteq \L^2(\R^D)$.
    \end{lemma}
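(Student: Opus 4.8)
The plan is to invoke the classical construction of a wavelet basis from a multiresolution approximation (Mallat--Meyer); this is essentially Section~3.9 of \citet{meyer1992wavelets}, so I would only sketch the argument and ultimately cite that reference for the full details. I would first prove the case $D = 1$ and then pass to general $D$ by tensorization.

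For $D = 1$: since $\phi \in V_0 \subset V_1$ and $\{\sqrt{2}\,\phi(2x - k)\}_{k \in \Z}$ is an orthonormal basis of $V_1$, the father wavelet satisfies a two-scale relation $\phi(x) = \sqrt{2}\sum_k h_k\,\phi(2x - k)$ with $h_k = \langle \phi, \sqrt{2}\,\phi(2\cdot - k)\rangle$. On the Fourier side this reads $\hat\phi(2\xi) = m_0(\xi)\,\hat\phi(\xi)$, where $m_0(\xi) = \frac{1}{\sqrt{2}}\sum_k h_k e^{-ik\xi}$ is $2\pi$-periodic, and orthonormality of the translates of $\phi$ (equivalently $\sum_k |\hat\phi(\xi + 2\pi k)|^2 = 1$) forces the quadrature-mirror identity $|m_0(\xi)|^2 + |m_0(\xi + \pi)|^2 = 1$. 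I would then \emph{define} the mother wavelet $\psi$ via the conjugate filter $m_1(\xi) := e^{-i\xi}\,\overline{m_0(\xi + \pi)}$ through $\hat\psi(2\xi) = m_1(\xi)\,\hat\phi(\xi)$, and check by a direct filter computation that $\{\psi(\cdot - k)\}_{k\in\Z}$ is orthonormal, orthogonal to $V_0$, and spans the closure of a complement of $V_0$ in $V_1$; hence it is an orthonormal basis of $W_0$, and dilating by $x \mapsto 2^j x$ (MRA axiom~3) gives the basis of $W_j$. Iterating $V_{j+1} = V_j \oplus W_j$ together with $\bigcap_j V_j = \{0\}$ and $\overline{\bigcup_j V_j} = \L^2(\R)$ then yields the ``Moreover'' orthonormal-basis statement.

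For general $D$, I would take $V_j$ to be the $D$-fold tensor power of the one-dimensional spaces, so $\phi(x) = \prod_{i=1}^D \phi^{(1)}(x_i)$. Expanding $V_{j+1} = \big(V_j^{(1)} \oplus W_j^{(1)}\big)^{\otimes D}$ and grouping the $2^D$ tensor summands produces $V_{j+1} = V_j \oplus W_j$ with $W_j = \bigoplus_{\epsilon \in E} W_j^{\epsilon}$, where $W_j^{\epsilon}$ is the closed span of the translates and dilates of $\psi_\epsilon(x) := \prod_{i : \epsilon_i = 1}\psi^{(1)}(x_i)\,\prod_{i : \epsilon_i = 0}\phi^{(1)}(x_i)$; orthonormality and the spanning property then follow factorwise from the one-dimensional case, giving item~2 and the ``Moreover'' clause for each scale $j$.

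The step I expect to be the real work is establishing items~1 and~3 --- the rapid decrease of $\psi_\epsilon$ and its derivatives up to order $r$, and the vanishing moments $\int x^\alpha \psi_\epsilon(x)\,dx = 0$ for $|\alpha| \le r$ --- since these are exactly where the $r$-regularity hypothesis on $\phi$ enters. Rapid decrease of $\psi_\epsilon$ and its derivatives would be inherited from that of $\phi$ (finitely many filter taps in the compactly supported case). For the moments, I would use that for an $r$-regular MRA $\hat\phi$ is $C^r$ near the origin with $\hat\phi(0) \ne 0$ and $\hat\phi(2\pi k) = 0$ for $k \in \Z^D \setminus \{0\}$ (a standard consequence of orthonormality plus density of $\bigcup_j V_j$), and then feed this into $\hat\psi_\epsilon(2\xi) = m_\epsilon(\xi)\,\hat\phi(\xi)$: showing that $\hat\psi_\epsilon$ and all partials up to order $r$ vanish at $\xi = 0$ amounts to checking that the relevant filter $m_\epsilon$ (built from $m_0$ and the conjugate filter $m_1$) has a zero of order $r+1$ at the origin, i.e.\ the Strang--Fix conditions to order $r$. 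Translating ``$\phi$ is $r$-regular'' into this precise statement about which derivatives of $m_0$ vanish at $\pi$ is the bookkeeping I would be most careful with; it is carried out in full in \citet{meyer1992wavelets}, which I would ultimately cite for the complete proof.
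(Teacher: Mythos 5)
The paper offers no proof of this lemma at all---it is quoted directly from Section~3.9 of \citet{meyer1992wavelets}---and your sketch is precisely the standard Mallat--Meyer construction (two-scale relation, quadrature-mirror filter, conjugate filter definition of $\psi$, tensorization to dimension $D$, and $r$-regularity giving decay and vanishing moments) carried out in that reference. So your proposal is correct and takes essentially the same route as the paper, namely deferring the full details to Meyer's book.
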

    The $r$-regularity of the mother wavelet as described part 3 of the above lemma determines the $r$-regularity of the wavelet basis. 
\section{Upper Bounds}

\subsection{Non-Linear Rate}
    In this section we provide proofs of the upper bounds stated above under both structured and unstructured contamination i.e. theorems \ref{thm:non_linear_unstructured_rate} and \ref{thm:structured_rate}. We will use a scaled version of the wavelet thresholding estimator to demonstrate these results. The proofs follow along the same lines as those of the uncontaminated version except the usual bias-variance trade-off now has an additional term; the misspecification error. 
    
    In particular, the bound on the bias remains unchanged. Moreover, we show that for resolutions small enough the variance can be bounded by the same term as before. This is straightforward for the variance of the linear terms but somewhat involved for that of the non-linear terms. So, we derive the bound for the non-linear terms at the very end. 
    
    There is a qualitative difference between the misspecification error under the structured and unstructured settings. When the contamination density is bounded, the misspecification error is simple bounded by the contamination proportion $\epsilon$; in the unstructured setting, this error depends on the number of terms considered in the estimator. 
    
    We now provide the formal proof.
    Let 
        \[
            \P = \{p:p\geq 0, \norm{p}_{\L^1}=1, \text{supp}(p)\subseteq [-T,T]\}
        \]
    denote the set of densities that are supported on the interval $[-T,T]$.
    We have assumed that our discriminator and generator classes are, respectively,
        \begin{align*}
            \F_d &= \{f : \norm{f}^{\sigma_d}_{p_d, q_d}\leq L_d\}\\
            \text{ and } \quad \F_g &= \{p : \norm{p}^{\sigma_g}_{p_g, q_g}\leq L_g\}\cap\P.
        \end{align*}
	Let for any density function $p$ 
	    \[
	        \begin{aligned}[c|r]
			        \alpha^p_{\phi} &=  \E_{X\sim p}[\phi(X)]\\
			        \text{ and } \quad \beta^p_{\psi} &= \E_{X\sim p}[\psi(X)].
			    \end{aligned}
	    \]
    Since $p\in \F_g$, we have that
	\begin{align*}
	    p&= \sum_{ \phi\in \Phi} \alpha^p_{\phi} \phi+
		    \sum_{j\geq 0} \sum_{\psi \in \Psi_j} \beta^p_{\psi}\psi,
	\end{align*}
	where the convergence is in the $L_p$ norm.
	
    For the unstructured setting we merely assume that the contamination density is compactly supported on $[-T,T]$. Under the structured contamination setting, we additionally assume that the contamination density $g$ is essentially bounded i.e. $\F_c = \L^\infty(L_c)$ (where $L_c$ is a uniform bound on the $\L_\infty$ norm of any $g\in \F_c$).
	
	We first show that it is enough to consider the ``sparse'' case (so called by \citet{donoho1996density}) characterized by $p_d'\geq p_g$ by the following lemma.

	\begin{lemma}
	    \label{lemma:nested_ipm}
	    For $p_d'\leq p_g$ and compactly supported densities $p, q\in \L_{p_g}\subseteq \L_{p_d'}$ we have that, 
	    \[
	        d_{\B^{\sigma_d}_{p_d,q_d}}(p,q) \leq d_{\B^{\sigma_d}_{p_g', q_d}}(p,q).
	    \]
	\end{lemma}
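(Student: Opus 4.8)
The plan is to exploit the fact that the IPM $d_{B^{\sigma_d}_{p_d,q_d}}(p,q)=\sup\{\int f\,(p-q):\norm{f}_{B^{\sigma_d}_{p_d,q_d}}\le L_d\}$ depends on the admissible test functions $f$ only through their restriction to $\mathrm{supp}(p-q)\subseteq[-T,T]^D$, together with the elementary fact that on a \emph{bounded} set raising the spatial integrability index of a Besov space only shrinks it. Note first that the hypothesis $p_d'\le p_g$ is equivalent to $p_g'\le p_d$, so the target discriminator class $B^{\sigma_d}_{p_g',q_d}$ is the one with the \emph{smaller} integrability index.

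First, given any $f$ with $\norm{f}_{B^{\sigma_d}_{p_d,q_d}}\le L_d$, expand it in the wavelet basis, $f=\sum_{\phi\in\Phi}\alpha_\phi\phi+\sum_{j\ge0}\sum_{\psi\in\Psi_j}\beta_\psi\psi$, and define $\tilde f$ by discarding every basis element whose support is disjoint from $[-T,T]^D$. Such elements vanish on $\mathrm{supp}(p-q)$, so the pairing $\int f\,(p-q)$ decomposes over the basis and is unchanged: $\int f\,(p-q)=\int\tilde f\,(p-q)$. Since the coefficient sequence of $\tilde f$ is a subsequence of that of $f$, monotonicity of $\ell^{p}$-norms gives $\norm{\tilde f}_{B^{\sigma_d}_{p_d,q_d}}\le\norm{f}_{B^{\sigma_d}_{p_d,q_d}}\le L_d$; moreover $\tilde f$ is supported in a fixed compact set $K$, since the retained wavelets at level $j$ all lie within an $O(2^{-j})$-neighbourhood of $[-T,T]^D$.

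Second, I would bound $\norm{\tilde f}_{B^{\sigma_d}_{p_g',q_d}}$ by a constant times $\norm{\tilde f}_{B^{\sigma_d}_{p_d,q_d}}$. Because $\tilde f$ is supported in the fixed compact $K$, at each level $j$ only $N_j$ of the $\beta$-coefficients are nonzero, with $N_j\le C2^{jD}$ uniformly in $j\ge0$ (and only $O(1)$ nonzero $\alpha$-coefficients). Applying H\"older's inequality on each such finite block and using $p_g'\le p_d$,
\[
2^{j(\sigma_d+D(1/2-1/p_g'))}\norm{\tilde\beta_j}_{\ell^{p_g'}}
\le N_j^{1/p_g'-1/p_d}\,2^{j(\sigma_d+D(1/2-1/p_g'))}\norm{\tilde\beta_j}_{\ell^{p_d}}
\lesssim 2^{j(\sigma_d+D(1/2-1/p_d))}\norm{\tilde\beta_j}_{\ell^{p_d}},
\]
where substituting $N_j\lesssim2^{jD}$ makes the $j$-dependent factors cancel exactly, leaving a $j$-uniform constant. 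Taking $\ell^{q_d}$-norms over $j$ and treating the father term analogously yields $\norm{\tilde f}_{B^{\sigma_d}_{p_g',q_d}}\le C'\norm{\tilde f}_{B^{\sigma_d}_{p_d,q_d}}\le C'L_d$, with $C'$ depending only on $T,D,p_d,p_g,\sigma_d$.

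Finally, combining the two steps, $|\int f\,(p-q)|=|\int\tilde f\,(p-q)|\le d_{B^{\sigma_d}_{p_g',q_d}(C'L_d)}(p,q)=C'\,d_{B^{\sigma_d}_{p_g',q_d}(L_d)}(p,q)$, and taking the supremum over $f$ gives the claim (the constant $C'$ is independent of $n$ and $\epsilon$, hence immaterial for the rate conclusions that invoke this lemma; strictly, the displayed inequality should be read up to this constant, or one notes that an alternative route via the dual characterization $d_{B^{\sigma_d}_{p_d,q_d}(L_d)}(p,q)\asymp L_d\norm{p-q}_{B^{-\sigma_d}_{p_d',q_d'}}$ reduces the statement to the same bounded-domain Besov embedding). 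The main obstacle is this second step: one must carefully control the per-level count $N_j$ of active wavelet coefficients of a compactly supported function, handling uniformly the coarse levels (where $N_j$ is merely bounded) and the fine levels (where $N_j\asymp2^{jD}$), so that the H\"older bookkeeping produces a constant that does not grow with $j$. The remaining steps are essentially definitional once the wavelet characterization of $B^{\sigma_d}_{p_d,q_d}$ and $B^{\sigma_d}_{p_g',q_d}$ is in hand.
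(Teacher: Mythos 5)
Your proposal is correct and follows essentially the same route as the paper: reduce to a ball inclusion between discriminator classes by showing, for functions with at most $O(2^{jD})$ active wavelet coefficients per level (due to compact support), that $2^{j(\sigma_d+D/2-D/p_g')}\norm{\beta_j}_{\ell^{p_g'}}\lesssim 2^{j(\sigma_d+D/2-D/p_d)}\norm{\beta_j}_{\ell^{p_d}}$ via H\"older with $p_g'\le p_d$, so the sup defining the IPM only grows when passing to $B^{\sigma_d}_{p_g',q_d}$. Your explicit truncation of $f$ to wavelets meeting $[-T,T]^D$ and the bookkeeping of the constant $C'$ just make precise what the paper's terser argument leaves implicit.
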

	\begin{proof}
	    Suppose $p,q$ are compactly supported on $[-T,T]$ then it is enough to show that 
	    \[
	        \B^{\sigma_d}_{p_d,q_d}(T)\subseteq \B^{\sigma_d}_{p_g',q_d}(T).
	    \]
	    Using the fact that $p_d\geq p_g'$, this is clear since, 
	    \[
	       2^{j(\sigma_d+D/2-D/p_g')}\norm{\beta_{j}}_{p_g'}
	        \leq 
	       2^{j(\sigma_d+D/2-D/p_d)}\norm{\beta_{j}}_{p_d}
	    \]
	    by using the simple fact that for a $2^{Dj}$-dimensional vector $x$, $\norm{x}_{p_g'}\leq 2^{Dj(1/p_g'-1/p_d)}\norm{x}_{p_d}$.
	\end{proof}
	
	Let $\hat{p}_n$ be the wavelet thresholding estimator of $p$ introduced by \cite{donoho1996density};
    \[
    \hat{p}_n 
		=   \sum_{ \phi\in \Phi} \hat{\alpha}_{\phi} \phi+
		    \sum_{j=0}^{j_0} \sum_{\psi\in \Psi_j} \hat{\beta}_{\psi}\psi+
		    \sum_{j= j_0}^{j_1}\sum_{\psi\in \Psi_j} \tilde{\beta}_{\psi} \psi
	\]
	where we threshold the higher resolution terms i.e. 
	\[
	    \begin{aligned}[c]
	        \alpha^p_{\phi} &=  \E_{X\sim p}[\phi(X)]\\
	        \beta^p_{\psi} &= \E_{X\sim p}[\psi(X)]
	    \end{aligned}\hspace{3ex}\qquad
	    \begin{aligned}[c]
	        \hat{\alpha}_{\phi} &= \frac{1}{n}\sum_{i=1}^n \phi(X_i)\\
	        \hat{\beta}_{\psi} &= \frac{1}{n}\sum_{i=1}^n \psi(X_i)\\
	        \tilde{\beta}_{\psi} &= \hat{\beta}_{\psi}\mathbf{1}_{\{\hat{\beta}_{\psi}>t\}}
	    \end{aligned}
	 \]
    with threshold $t = K\sqrt{j/n}$, where $K$ is a constant to be specified later, and 
	\begin{align*}
	    2^{j_0} &= \sqrt{n}^{\frac{1}{\sigma_g+D/2}}\\
	    2^{j_1} &= \sqrt{n}^{\frac{1}{\sigma_g+D/2-D/p_g}} \wedge \epsilon^{-\frac{1}{\sigma_g+D-D/p_g}}
	\end{align*}
	We will use a scaled version of this estimator i.e. $\frac{1}{1-\epsilon}\hat{p}_n$. 
	
	We decompose the risk of the above estimator as follows. At each resolution $\hat{\alpha}_\phi$ or $\hat{\beta}_\psi$ is an unbiased estimate of the co-efficient of the contaminated density $(1-\epsilon)p+\epsilon g$. So, by the triangle inequality, we can decompose the error as 
		\begin{align}
		\notag
	    \E d_{\F}&\left(\frac{\hat{p}_n}{1-\epsilon}, p\right)\\
            &\leq \frac{1}{1-\epsilon}\E d_{\F}
                \left(
                \sum_{ \phi\in \Phi} \hat{\alpha}_{\phi} \phi,
		        \sum_{ \phi\in \Phi} (\alpha^p_\phi+ \epsilon \alpha^g_\phi)\phi
                \right) \\
            &+ \frac{1}{1-\epsilon}d_{\F}\left(
		    \sum_{j=0}^{j_0} \sum_{\psi\in \Psi_j} \hat{\beta}_{\psi}\psi,
		    \sum_{j=0}^{j_0} \sum_{\psi\in \Psi_j} 
		            (\beta^p_{\psi}+ \epsilon \beta^g_\psi)\psi
		        \right)\\
            &+ \frac{1}{1-\epsilon}d_{\F}\left(
            \sum_{j= j_0}^{j_1}\sum_{\psi\in \Psi_j} \tilde{\beta}_{\psi} \psi
            , \sum_{j= j_0}^{j_1}\sum_{\psi\in \Psi_j} (\beta^p_{\psi}+\epsilon \beta^g_{\psi}) \psi
            \right)\\
            &+ \frac{1}{1-\epsilon} d_{\F}\left(
            \sum_{ \phi\in \Phi} \alpha^p_{\phi}\phi+\sum_{j=0}^{j_1} \sum_{\psi\in \Psi_j} 
		           \beta^p_{\psi}\psi
            , p
            \right)\\
            &+ \epsilon d_{\F}
                \left(
                \sum_{ \phi\in \Phi} 
                \alpha^g_{\phi} \phi
                +\sum_{j=0}^{j_1} \sum_{\psi\in \Psi_j} 
		           \beta^g_{\psi}\psi
		        ,0
		        \right)
		    \label{eq:bias_var_decomp}
    \end{align}
    where the first two terms constitute the error of the linear terms, the third term is the error of the non-linear terms, the fourth term is the bias and the last term is the misspecification error, respectively. 
	
	We will use the following upper bounds on the bias and variance of a linear wavelet estimator (when $j_0=j_1$ above) from Appendix C of \citet{uppal2019nonparametric}.
    
    First we see that under Besov IPMs, if the moments of the wavelet co-efficients of the density don't grow too fast with the resolution then the variance of the linear wavelet estimator can be conveniently bounded. 
    \begin{lemma}({\bf Variance})
        \label{lemma:variance}
        Let $X_1,\dots, X_n \sim p$ where $p$ is compactly supported and $\F_d = B^{\sigma_d}_{p_d,q_d}$. If   $\E_p|\psi(X)|^{p_d'}\leq c_{p_d'} 2^{Dj(p_d'/2-1)}$ for all $\psi \in \Psi_j$, then the variance of a linear wavelet estimator $\hat{p}_n$ with $j_0$ terms i.e. 
            \[
                \hat{p}_n 
	        =   \sum_{ \phi\in \Phi} \hat{\alpha}_{\phi} \phi+
	            \sum_{j=0}^{j_0} \sum_{\psi\in \Psi_j} \hat{\beta}_{\psi}\psi
            \]
        is bounded by 
            \[
                d_{\F_d}(\hat{p}_n,\E[\hat{p}_n]) \leq c
                \left(     \frac{1}{\sqrt{n}}+\frac{2^{j_0(D/2-\sigma_d)}}{\sqrt{n}}
                \right)
            \]
        where $c = c_{p_d'} \left(\E_p|\psi(X)|^2\right)^{1/2}$ is a constant.
    \end{lemma}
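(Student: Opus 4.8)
The plan is to bound $d_{\F_d}(\hat{p}_n, \E[\hat{p}_n]) = \sup_{f \in \F_d} \left| \int f(\hat{p}_n - \E[\hat{p}_n]) \right|$ (in expectation over the sample) using nothing more than Hölder's inequality on the wavelet side, reducing it to moment bounds on the empirical wavelet coefficients. Expanding $f = \sum_{\phi \in \Phi} \alpha^f_\phi \phi + \sum_{j \geq 0} \sum_{\psi \in \Psi_j} \beta^f_\psi \psi$ and noting that $\hat{p}_n - \E[\hat{p}_n]$ has wavelet coefficients $\hat{\alpha}_\phi - \E[\hat{\alpha}_\phi]$ and $\hat{\beta}_\psi - \E[\hat{\beta}_\psi]$ at resolutions $j \leq j_0$ and none beyond, orthonormality of the basis gives
\[
    \int f(\hat{p}_n - \E[\hat{p}_n]) = \sum_{\phi \in \Phi} \alpha^f_\phi (\hat{\alpha}_\phi - \E[\hat{\alpha}_\phi]) + \sum_{j=0}^{j_0} \sum_{\psi \in \Psi_j} \beta^f_\psi (\hat{\beta}_\psi - \E[\hat{\beta}_\psi]).
\]
First I would apply Hölder's inequality with exponents $(p_d, p_d')$ at each resolution, so the $f$-dependent factor is $\norm{\alpha^f}_{l^{p_d}}$ or $\norm{\beta^f_j}_{l^{p_d}}$. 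The Besov-ball constraint $\norm{f}_{B^{\sigma_d}_{p_d,q_d}} \leq L_d$ controls these uniformly in $f$: since the $l^{q_d}$ norm dominates the $l^\infty$ norm, $\norm{\alpha^f}_{l^{p_d}} \leq L_d$ and $\norm{\beta^f_j}_{l^{p_d}} \leq L_d\, 2^{-j(\sigma_d + D/2 - D/p_d)}$ for every $j$. Hence the supremum over $f$ is dominated by a random quantity not involving $f$, namely $d_{\F_d}(\hat{p}_n, \E[\hat{p}_n]) \leq L_d \big( \norm{\hat{\alpha} - \E[\hat{\alpha}]}_{l^{p_d'}} + \sum_{j=0}^{j_0} 2^{-j(\sigma_d + D/2 - D/p_d)} \norm{\hat{\beta}_j - \E[\hat{\beta}_j]}_{l^{p_d'}} \big)$, and it remains only to bound $\E\norm{\hat{\beta}_j - \E[\hat{\beta}_j]}_{l^{p_d'}}$ and the analogous $\Phi$ term.

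Next I would control these via moment estimates. By Jensen's inequality $\E\norm{\hat{\beta}_j - \E[\hat{\beta}_j]}_{l^{p_d'}} \leq \big( \sum_{\psi \in \Psi_j} \E|\hat{\beta}_\psi - \E[\hat{\beta}_\psi]|^{p_d'} \big)^{1/p_d'}$, and each $\hat{\beta}_\psi - \E[\hat{\beta}_\psi] = \frac1n \sum_{i=1}^n (\psi(X_i) - \E[\psi(X_i)])$ is a normalized sum of i.i.d.\ mean-zero terms. When $p_d' \leq 2$, concavity of $t \mapsto t^{p_d'/2}$ yields $\E|\hat{\beta}_\psi - \E[\hat{\beta}_\psi]|^{p_d'} \leq (\V[\hat{\beta}_\psi])^{p_d'/2} \leq (\E_p[\psi^2]/n)^{p_d'/2}$; when $p_d' \geq 2$, Rosenthal's inequality yields $\E|\hat{\beta}_\psi - \E[\hat{\beta}_\psi]|^{p_d'} \lesssim n^{-p_d'/2} (\E_p[\psi^2])^{p_d'/2} + n^{-(p_d'-1)} \E_p|\psi|^{p_d'}$. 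Summing over the $\asymp 2^{Dj}$ daughter wavelets at resolution $j$ whose supports meet that of $p$, and invoking the hypothesis $\E_p|\psi|^{p_d'} \leq c_{p_d'} 2^{Dj(p_d'/2 - 1)}$, the variance term contributes $2^{Dj} n^{-p_d'/2}(\E_p[\psi^2])^{p_d'/2}$ and the Rosenthal remainder $c_{p_d'} 2^{Dj p_d'/2} n^{-(p_d'-1)}$; for the resolution range of interest ($2^{j_0} \lesssim n^{1/D}$) the former dominates after taking $p_d'$-th roots, giving $\E\norm{\hat{\beta}_j - \E[\hat{\beta}_j]}_{l^{p_d'}} \lesssim 2^{Dj/p_d'} n^{-1/2} (\E_p[\psi^2])^{1/2}$. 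The same computation (with only $O(1)$ relevant father wavelets) gives $\E\norm{\hat{\alpha} - \E[\hat{\alpha}]}_{l^{p_d'}} \lesssim n^{-1/2}$.

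Finally I would assemble the pieces. Substituting into the bound from the first step, the exponents telescope because $D/p_d + D/p_d' = D$, so the $j$-th summand is $\lesssim L_d\, 2^{j(D/2 - \sigma_d)} n^{-1/2} (\E_p[\psi^2])^{1/2}$; summing the geometric series $\sum_{j=0}^{j_0} 2^{j(D/2-\sigma_d)} \lesssim 1 + 2^{j_0(D/2-\sigma_d)}$ (a harmless $\log n$ factor appears only in the borderline case $\sigma_d = D/2$) and adding the $O(n^{-1/2})$ contribution of $\Phi$ yields $d_{\F_d}(\hat{p}_n, \E[\hat{p}_n]) \lesssim c \big( n^{-1/2} + 2^{j_0(D/2 - \sigma_d)} n^{-1/2} \big)$ with $c = c_{p_d'}(\E_p[\psi^2])^{1/2}$, as claimed. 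I expect the main obstacle to be the moment computation in the second step: obtaining the sharp $2^{Dj/p_d'}$ growth requires the case split $p_d' \leq 2$ versus $p_d' > 2$, and in the latter regime one must verify that the resolution restriction $2^{j_0} \lesssim n^{1/D}$ is precisely what makes the Rosenthal large-deviation remainder $n^{-(p_d'-1)} \E_p|\psi|^{p_d'}$ negligible against the variance term. The remaining manipulations — the Hölder reduction and the geometric sum — are routine.
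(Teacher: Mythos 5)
Your proposal is correct and takes essentially the same route as the paper: the Hölder/Besov-ball reduction at each resolution is exactly the paper's coefficient-duality lemmas, and the moment step (Jensen for $p_d'\leq 2$, Rosenthal for $p_d'\geq 2$, with the remainder absorbed because $2^{Dj_0}=O(n)$) matches the paper's moment-bound argument, followed by the same geometric sum. Your explicit flagging of the resolution restriction and the borderline $\sigma_d=D/2$ log factor is consistent with the paper, which states this lemma under the implicit condition $2^{Dj}\in O(n)$ and works up to polylog factors.
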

    Note here that we do not need the density to lie in a Besov space but to simply have the given bound on the moments of its wavelet coefficients.
    However, for a bound on the bias provided below we need the full power of the Besov space.        
    \begin{lemma}({\bf Bias})
        \label{lemma:bias}
        Let $X_1,\dots, X_n \sim p$ where $p \in B^{\sigma_g}_{p_g,q_g}$ is compactly supported and $\sigma_g\geq D/p_g$, $\F_d = B^{\sigma_d}_{p_d,q_d}$. Then the bias of a linear wavelet estimator $\hat{p}$ with $j_0$ terms is bounded by 
            \[
                d_{\F_d}(p,\E_p[\hat{p}_n])
                \leq c 2^{-j_0(\sigma_d+\sigma_g-(D/p_g-D/p_d')_+)}
            \]
        where $c = L_d L_g$ is a constant.
        \end{lemma}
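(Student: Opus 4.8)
The plan is to recognize the bias as a constant times the tail (over resolutions $j>j_0$) of a negative-smoothness Besov norm of $p$, and then to convert that tail into a power of $2^{-j_0}$ using the assumed $B^{\sigma_g}_{p_g,q_g}$-regularity of $p$; the three ingredients are exact computation of $\E_p[\hat p_n]$, duality of the IPM against the discriminator ball, and the nesting Lemma~\ref{lemma:nested_ipm}.

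First I would compute $\E_p[\hat p_n]$ exactly. Since $\E_p[\hat\alpha_\phi]=\alpha^p_\phi$ and $\E_p[\hat\beta_\psi]=\beta^p_\psi$, the expectation of the linear estimator with $j_0$ terms is the partial wavelet reconstruction $P_{j_0}p:=\sum_{\phi\in\Phi}\alpha^p_\phi\phi+\sum_{j=0}^{j_0}\sum_{\psi\in\Psi_j}\beta^p_\psi\psi$, so $p-\E_p[\hat p_n]=\sum_{j>j_0}\sum_{\psi\in\Psi_j}\beta^p_\psi\psi$ has vanishing father coefficients and vanishing mother coefficients at resolutions $\le j_0$. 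By the definition of the IPM and orthonormality of the wavelet basis,
\[
  d_{\F_d}(p,\E_p[\hat p_n]) = \sup_{f\in B^{\sigma_d}_{p_d,q_d}(L_d)}\Big|\sum_{j>j_0}\sum_{\psi\in\Psi_j}\beta^f_\psi\,\beta^p_\psi\Big|.
\]
I would then apply H\"older's inequality twice to this double sum — within each level $j$ with conjugate exponents $(p_d,p_d')$ and then across levels with conjugate exponents $(q_d,q_d')$ — after inserting the balancing weights $2^{\pm j(\sigma_d+D(1/2-1/p_d))}$. The factor carrying the positive power reassembles into $\|f\|_{B^{\sigma_d}_{p_d,q_d}}\le L_d$, leaving
\[
  d_{\F_d}(p,\E_p[\hat p_n]) \le L_d\Big\|\big\{2^{j(-\sigma_d+D(1/2-1/p_d'))}\|\beta^p_j\|_{l^{p_d'}}\big\}_{j>j_0}\Big\|_{l^{q_d'}},
\]
i.e.\ $L_d$ times the $j>j_0$ tail of the $B^{-\sigma_d}_{p_d',q_d'}$ wavelet norm of $p$.

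Next I would reduce to the sparse regime $p_d'\ge p_g$: for $p_d'\le p_g$, Lemma~\ref{lemma:nested_ipm} lets me replace the discriminator class by $B^{\sigma_d}_{p_g',q_d}$, whose conjugate spatial index is $(p_g')'=p_g$, and this is exactly what makes $(D/p_g-D/p_d')_+=0$ the correct exponent in that regime. Assuming now $p_d'\ge p_g$, the embedding $l^{p_g}\hookrightarrow l^{p_d'}$ gives $\|\beta^p_j\|_{l^{p_d'}}\le\|\beta^p_j\|_{l^{p_g}}$, and a short computation of exponents yields, for $j>j_0$,
\[
  2^{j(-\sigma_d+D(1/2-1/p_d'))}\|\beta^p_j\|_{l^{p_d'}} \le 2^{-j\gamma}b_j, \qquad \gamma:=\sigma_d+\sigma_g-D/p_g+D/p_d',
\]
where $b_j:=2^{j(\sigma_g+D(1/2-1/p_g))}\|\beta^p_j\|_{l^{p_g}}$ satisfies $\|b\|_{l^{q_g}}\le\|p\|_{B^{\sigma_g}_{p_g,q_g}}\le L_g$, and $\gamma\ge0$ because $\sigma_g\ge D/p_g$. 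Writing $2^{-j\gamma}=2^{-j_0\gamma}2^{-(j-j_0)\gamma}$, pulling out $2^{-j_0\gamma}$, and bounding the residual sequence $\{2^{-(j-j_0)\gamma}b_j\}_{j>j_0}$ in $l^{q_d'}$ against the $l^{q_g}$-summability of $b$ (directly via $l^{q_g}\hookrightarrow l^{q_d'}$ when $q_d'\ge q_g$, and with one more H\"older step when $q_d'<q_g$) gives $d_{\F_d}(p,\E_p[\hat p_n])\lesssim L_dL_g\,2^{-j_0\gamma}=L_dL_g\,2^{-j_0(\sigma_d+\sigma_g-(D/p_g-D/p_d')_+)}$.

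The main obstacle is purely the bookkeeping: H\"older duality must be tracked simultaneously along the spatial index, the summability index, and the resolution level, and the sparse/dense split must be threaded through Lemma~\ref{lemma:nested_ipm} with matching conjugates. A minor technical point is that $\gamma$ can vanish, but only in the degenerate corner $\sigma_d=0$, $p_d=1$, $\sigma_g=D/p_g$; there the geometric factor degenerates into a count of $O(j_1-j_0)$ resolution levels, which is absorbed into the stated ``up to polylog'' convention.
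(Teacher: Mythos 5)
Your proposal is correct and follows essentially the same route as the argument this paper relies on (the proof is deferred to Appendix C of \citet{uppal2019nonparametric}, whose machinery is reproduced here as Lemmas~\ref{lemma:coeff}, \ref{lemma:seq_norm}, and \ref{lemma:nested_ipm}): identify the bias with the tail of the wavelet expansion, dualize level-by-level against the discriminator ball, compare $\ell^{p_d'}$ and $\ell^{p_g}$ norms (using the nesting lemma in the dense case, which produces the $(D/p_g-D/p_d')_+$ exponent), and sum the resulting geometric series controlled by $\|p\|_{B^{\sigma_g}_{p_g,q_g}}\le L_g$. Your caveat about the degenerate corner $\gamma=0$ is apt but harmless, since the paper's theorems assume $\sigma_g>D/p_g$ (or smoothness conditions excluding it) in the relevant regimes.
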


     We will also need the following bound on a density living in a Besov space.
     \begin{lemma}{\bf (Upper Bound on Smooth Besov Spaces)}
        \label{lemma:besov_ub}
        Let $f\in B^{\sigma_g}_{p_g,q_g}$ where $\sigma_g> D/p_g$ then 
        \[
            \norm{f}_\infty \leq 4A\norm{\psi}_\infty L_g (1-2^{(\sigma_g-D/p_g)q_g'})^{-1/q_g'}
        \]
    \end{lemma}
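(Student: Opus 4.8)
The plan is to expand $f$ in the wavelet basis, $f = \sum_{\phi\in\Phi}\alpha_\phi\phi + \sum_{j\geq0}\sum_{\psi\in\Psi_j}\beta_\psi\psi$ (the expansion that the hypothesis $f\in B^{\sigma_g}_{p_g,q_g}$ provides), and to bound $\norm{f}_\infty$ by summing the sup-norms of the individual terms. The two structural facts I would use are: first, $\phi$ and the mother wavelets are compactly supported, so at any fixed point $x$ only a bounded number---call it $A$, the geometric constant appearing in the statement, depending only on $D$ and the support diameters---of the translates $\phi(\cdot-k)$ are nonzero, and likewise at each resolution $j$ at most $A$ of the daughter wavelets in $\Psi_j$ are nonzero at $x$; second, by the $\L^2$-normalization of the basis a resolution-$j$ daughter wavelet has sup-norm of order $2^{Dj/2}\norm{\psi}_\infty$. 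Combining these with the fact that an $l^\infty$ norm is dominated by an $l^{p_g}$ norm gives, for every $x$,
\[
  |f(x)| \;\leq\; A\norm{\phi}_\infty\norm{\alpha}_{l^{p_g}} \;+\; A\norm{\psi}_\infty\sum_{j\geq0}2^{Dj/2}\norm{\beta_j}_{l^{p_g}}.
\]

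Next I would control the series $\sum_{j\geq0}2^{Dj/2}\norm{\beta_j}_{l^{p_g}}$ using the Besov norm. Writing $a_j := 2^{j(\sigma_g+D(1/2-1/p_g))}\norm{\beta_j}_{l^{p_g}}$, the definition of $\norm{f}_{B^{\sigma_g}_{p_g,q_g}}$ gives both $\norm{\alpha}_{l^{p_g}}\leq L_g$ and $\norm{\{a_j\}_j}_{l^{q_g}}\leq\norm{f}_{B^{\sigma_g}_{p_g,q_g}}\leq L_g$. Since $2^{Dj/2}\norm{\beta_j}_{l^{p_g}} = 2^{-j(\sigma_g-D/p_g)}a_j$, Hölder's inequality with exponents $q_g$ and $q_g'$ yields
\[
  \sum_{j\geq0}2^{Dj/2}\norm{\beta_j}_{l^{p_g}} \;=\; \sum_{j\geq0}2^{-j(\sigma_g-D/p_g)}a_j \;\leq\; \Bigl(\sum_{j\geq0}2^{-j(\sigma_g-D/p_g)q_g'}\Bigr)^{1/q_g'}\norm{\{a_j\}_j}_{l^{q_g}},
\]
where the geometric series converges precisely because $\sigma_g>D/p_g$, with sum $(1-2^{-(\sigma_g-D/p_g)q_g'})^{-1}$. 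Substituting this back, and using $\norm{\phi}_\infty\lesssim\norm{\psi}_\infty$ together with $(1-2^{-(\sigma_g-D/p_g)q_g'})^{-1/q_g'}\geq1$ to fold the father-wavelet term into the daughter-wavelet term, produces a bound of the form $4A\norm{\psi}_\infty L_g(1-2^{-(\sigma_g-D/p_g)q_g'})^{-1/q_g'}$, which is the claimed inequality (I read the exponent in the statement as $-(\sigma_g-D/p_g)q_g'$; with a positive exponent the base would be negative). Applying the same estimate to partial sums also shows the wavelet expansion converges uniformly, which justifies the termwise manipulations.

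I expect the only delicate point to be the first step: pinning down the overlap constant $A$ and checking that the normalization of the coefficients $\alpha_\phi,\beta_\psi$ and of the wavelet sup-norms is consistent with the weight $2^{j(\sigma_g+D(1/2-1/p_g))}$ in the Besov norm, so that the $2^{Dj/2}$ growth of the daughter-wavelet sup-norms is cancelled exactly by the $D/2$ inside that weight. Once that bookkeeping is settled, the rest is the elementary Hölder plus geometric-series computation above, which goes through uniformly for all $q_g\in[1,\infty]$ (the endpoints $q_g=1$ and $q_g=\infty$ reducing, respectively, to a $\sup_j$ bound and a $\sum_j$ bound, consistent with the stated formula).
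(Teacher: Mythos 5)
Your argument is correct and is essentially the standard proof of this bound (which the paper itself states without proof, importing it from the prior work of Uppal et al.): expand $f$ in the wavelet basis, use compact support to bound the number of overlapping terms at a point, pass from $l^\infty$ to $l^{p_g}$ norms of the coefficient vectors, rewrite via the Besov weights, and apply H\"older with the geometric series that converges exactly when $\sigma_g > D/p_g$; your reading of the exponent as $-(\sigma_g - D/p_g)q_g'$ is the right correction of the sign typo in the statement. The only loose ends---absorbing the father-wavelet term via $\norm{\phi}_\infty \lesssim \norm{\psi}_\infty$ and the finitely many mother wavelets $\psi_\epsilon$, $\epsilon \in \{0,1\}^D$---affect only the unspecified constant $4A$, not the form of the bound.
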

    This lemma implies that sufficiently smooth Besov spaces $B^{\sigma_g}_{p_g,q_q}$ are uniformly bounded.
    
    We are now ready to provide upper bounds on the risk in both the structured and unstructured setting whenever $p_d'\geq p_g$. 
    
    %---------------- Bias
    Under both the structured and unstructured contamination setting we can immediately bound the bias term using lemma~\ref{lemma:bias} (since this is just the bias of a linear wavelet estimator with $j_1$ terms) by 
    	\[
    	    2^{-j_1(\sigma_g+\sigma_d+D/p_d'-D/p_g)}
    	\]
    	
    Since, $\sigma_g> D/p_g$  we know that by lemma \ref{lemma:besov_ub}, $\norm{p}_\infty<\infty$. Therefore, for any $\psi\in \Psi_j$,
    	\begin{align*}
    	    \E_{(1-\epsilon)p+\epsilon g} \left[|\psi(X)|^{p_d'}\right]
    	    &\leq (1-\epsilon)\norm{p}_\infty2^{Dj(p_d'/2-1)} + \epsilon\E_{g} \left[|\psi(X)|^{p_d'}\right]
    	\end{align*}
   	
    When contamination is structured i.e. $\norm{g}_\infty<\infty$ we have 
    \[
        \epsilon\E_{g} \left[|\psi(X)|^{p_d'}\right] \leq \epsilon\norm{g}_\infty2^{Dj(p_d'/2-1)}
    \]
    and when the contamination density is not bounded above we have, 
	\[
	    \epsilon\E_g|\psi(X)|^{p_d'}
	    \leq \epsilon 2^{Djp_d'/2}
	\]
	Since in this case, $2^{Dj_1}\leq \epsilon^{-\frac{D}{\sigma_g+D-D/p_g}}\leq 1/\epsilon$, $\epsilon\leq 2^{-Dj}$ for all $j\leq j_1$ the term above is always smaller than $2^{Dj(p_d'/2-1)}$.
	
	So, under both cases, we have, 
    \begin{align}
    \label{eq:moment_bound}
        \E_{(1-\epsilon)p+\epsilon g} |\psi(X)|^{p_d'}
        \leq c2^{Dj(p_d'/2-1)}
    \end{align}
    
    %---------------- Variance
	We can now use lemma \ref{lemma:variance} to bound the variance for both cases as 
	    \[
	         \E d_{\F}(\hat{p}_n, \E_{(1-\epsilon)p+\epsilon g}[\hat{p}_n]) \leq c\left( \frac{1}{\sqrt{n}}+\frac{2^{j_0(D/2-\sigma_d)}}{\sqrt{n}} 
	         \right)
	    \]
    We can also similarly bound the mis-specification error as this is simply the misspecification error of a linear wavelet estimator with $j_1$ terms. We then have an upper bound of
     \[
         \epsilon 2^{j_1(D/p_d-\sigma_d)}.
    \]

	 %---------------- Mis-specification Error
      We now bound the misspecification error i.e.
	    \[
	        \frac{\epsilon}{1-\epsilon} d_{\F}\left(\E_{g}[\hat{p}_n], 0\right)
	    \]
    We will use the following lemmas proven by \citet{uppal2019nonparametric} to first reduce the expression of the above distance to one in terms of wavelet coefficients (of $g$) only.
    
     \begin{lemma}
        \label{lemma:coeff}
        Let $p$, $q$ be compactly supported probability densities and $\F_d = B^{\sigma_d}_{p_d,q_d}$, s.t. either $p, q\in L_{p_d'}$ or $\sigma_d>D/p_d$, then $d_{\F_d}(p,q)=$
        \[
            \sup_{f\in \F_d}\left| 
			\sum_{\phi\in \Phi}
			\alpha^f_{\phi}
			\left(\alpha_{ \phi}^p- \alpha_{\phi}^q\right) + 
			\sum_{j\geq 0}\sum_{\psi\in \Psi_j} 
			\beta^f_{\psi}
			\left( \beta_{\psi}^p- \beta_{\psi}^q\right)\right|
        \]
        where for $f\in \F_d$
            \[
    	    f = \sum_{\phi\in \Phi}\alpha^f_{\phi}\phi + 
    			\sum_{j\geq 0}\sum_{\psi\in \Psi_j} \beta^f_{\psi}\psi
    	    \]
    \end{lemma}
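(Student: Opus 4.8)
The plan is to reduce the claimed identity for $d_{\F_d}$ to a ``Parseval-type'' identity for the pairing of a discriminator with a compactly supported density. Concretely, I will show that for every $f \in \F_d$ and every compactly supported probability density $p$ satisfying the hypotheses,
\[
    \E_{X\sim p} f(X) \;=\; \int f(x)p(x)\,dx \;=\; \sum_{\phi\in\Phi}\alpha^f_\phi\,\alpha^p_\phi \;+\; \sum_{j\geq 0}\sum_{\psi\in\Psi_j}\beta^f_\psi\,\beta^p_\psi .
\]
Granting this identity for both $p$ and $q$, subtracting the two and taking the supremum over $f \in \F_d$ immediately yields the stated formula, since $d_{\F_d}(p,q) = \sup_{f\in\F_d}\left|\E_{X\sim p}f(X) - \E_{X\sim q}f(X)\right|$ by the definition~\eqref{eq:IPM} of the IPM. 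Thus the whole content of the lemma is the displayed wavelet identity.

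To prove that identity, I would expand $f$ in the wavelet basis, $f = \sum_{\phi\in\Phi}\alpha^f_\phi\phi + \sum_{j\geq0}\sum_{\psi\in\Psi_j}\beta^f_\psi\psi$, and consider the partial sums $f_J := \sum_{\phi\in\Phi}\alpha^f_\phi\phi + \sum_{j=0}^{J}\sum_{\psi\in\Psi_j}\beta^f_\psi\psi$. Because $\phi$ and $\psi$ are compactly supported and $p$ is supported on a fixed bounded set, at each fixed scale $j$ only finitely many translates $\psi\in\Psi_j$ (and only finitely many $\phi\in\Phi$) have support meeting $\mathrm{supp}(p)$; hence each inner sum $\sum_{\psi\in\Psi_j}\beta^f_\psi\beta^p_\psi$ and the sum $\sum_{\phi\in\Phi}\alpha^f_\phi\alpha^p_\phi$ are finite, and by linearity of the integral together with $\alpha^p_\phi = \int\phi p$, $\beta^p_\psi = \int\psi p$ we get $\int f_J(x)p(x)\,dx = \sum_{\phi\in\Phi}\alpha^f_\phi\alpha^p_\phi + \sum_{j=0}^{J}\sum_{\psi\in\Psi_j}\beta^f_\psi\beta^p_\psi$. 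So it remains only to show $\int f_J\,p \to \int f\,p$ as $J\to\infty$, and then let $J\to\infty$ in the previous display; this passage to the limit (equivalently, interchanging the infinite sum over scales $j$ with the integral against $p$) is the main obstacle and is exactly what the two alternative hypotheses are designed to handle.

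The convergence $\int f_J\,p \to \int f\,p$ is established in two cases. If $\sigma_d > D/p_d$, then by the Besov embedding underlying Lemma~\ref{lemma:besov_ub} the ball $\F_d$ consists of bounded continuous functions, the wavelet partial sums $f_J$ are uniformly bounded and converge to $f$ uniformly on compact sets, and since $p \in \L^1$ is compactly supported, dominated convergence gives $\int f_J\,p \to \int f\,p$ (and in particular $\int f\,p$ is finite, being bounded by $\norm{f}_\infty$). If instead $p,q \in \L^{p_d'}$, then $f \in \L^{p_d}$ and its wavelet series converges to $f$ in $\L^{p_d}$, so by H\"older's inequality $\left|\int (f-f_J)\,p\right| \leq \norm{f-f_J}_{p_d}\,\norm{p}_{p_d'} \to 0$ (and $\int f\,p$ is finite by the same inequality). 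In either case the partial-sum identity passes to the limit, which proves the displayed wavelet identity for $p$ (and, identically, for $q$), completing the proof.
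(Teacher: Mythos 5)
The paper itself does not prove this lemma: it is imported verbatim from \citet{uppal2019nonparametric} (``We will use the following lemmas proven by \citet{uppal2019nonparametric}\dots''), so there is no internal proof to compare against. Your argument is the natural direct one, and its structure is sound: reduce the claim to the Parseval-type identity $\int f\,p = \sum_\phi \alpha^f_\phi\alpha^p_\phi + \sum_j\sum_\psi \beta^f_\psi\beta^p_\psi$, note that compact support of $p$ and of the wavelets makes each scale a finite sum so the identity is exact for the partial sums $f_J$, and then let $J\to\infty$, with the two hypotheses of the lemma exactly matching your two limit-interchange arguments. The case $\sigma_d > D/p_d$ is handled correctly: the coefficient bounds give $\norm{\sum_{\psi\in\Psi_j}\beta^f_\psi\psi}_\infty \lesssim 2^{-j(\sigma_d-D/p_d)}$, so the series converges uniformly and pairing against $p\in\L^1$ passes to the limit.

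The one step that needs more care is the other case, where you assert that $f\in\L^{p_d}$ and that its wavelet series converges in $\L^{p_d}$-norm. This is true for $\sigma_d>0$ and $1<p_d<\infty$ (Besov embedding plus unconditionality of compactly supported wavelet bases in $\L^{p_d}$), but it is exactly the endpoint configurations that the paper cares about where it fails or requires a different argument: for $\sigma_d=0$ with large $q_d$ one only has $\L^{p_d}\subset B^0_{p_d,q_d}$, not the reverse, and for $p_d\in\{1,\infty\}$ wavelet systems are not unconditional bases, so norm convergence of the partial sums is not available. The total variation example used in the paper ($\sigma_d=0$, $p_d=q_d=\infty$, hence $p_d'=1$ and the hypothesis $p,q\in\L^{1}$ is vacuous) lands precisely in this regime, so your Case 2 as written does not cover it. The fix is to replace norm convergence of $f_J$ by a weaker mode compatible with the pairing: e.g., the MRA projections $P_Jf$ are uniformly bounded operators on $\L^\infty$ for compactly supported $r$-regular wavelets and converge a.e., so dominated convergence against $p-q\in\L^1$ still yields the identity; alternatively one can argue directly from the coefficient characterization, showing the level-$j$ blocks of $\sum_\psi\beta^f_\psi(\beta^p_\psi-\beta^q_\psi)$ are summable under the stated hypotheses. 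With that adjustment your proof is complete and is, in substance, the argument the cited reference uses.
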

    
    \begin{lemma}
        \label{lemma:seq_norm}
	    Let $n_1, n_2\in \N \cup \{\infty\}$ and $\eta$ be any sequence of numbers. Then
	    \begin{align*}
	        \E_{X_1,\dots, X_n} &\sup_{f\in \F_d}\sum_{j=n_1}^{n_2}\sum_{\psi\in \Psi_j} \gamma_{\psi}^f\eta_{\psi}\leq 
	            L_d
		    \sum_{j=n_1}^{n_2}2^{-j\sigma_d'}
		    \left(\E_{X_1,\dots, X_n}
		    \sum_{\psi\in \Psi_j}|\eta_{\psi}|^{p_d'}
		    \right)^{1/p_d'}
	    \end{align*}
	    where $\sigma_d' = \sigma_d+D/2-D/p_d$.
	    Note that the above is true also if $\gamma = \alpha^f$ and $n_1=n_2=0$.
	\end{lemma}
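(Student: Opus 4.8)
The plan is to peel the inner supremum apart resolution by resolution, using the Besov-ball membership of the discriminator $f$ to control its wavelet coefficients at each level, bound the level-$j$ bilinear form by H\"older's inequality, and only at the very end pull the expectation through a concave power via Jensen. First I would freeze a realization of $X_1,\dots,X_n$, so that $\eta$ is a deterministic sequence, and work on the inner $\sup_{f\in\F_d}$. For any $f\in\F_d=B^{\sigma_d}_{p_d,q_d}(L_d)$, the definition of the Besov norm gives, at every resolution $j$,
\[
    2^{j(\sigma_d+D(1/2-1/p_d))}\norm{\beta^f_j}_{l^{p_d}}
    \;\leq\;
    \norm{\left\{2^{j(\sigma_d+D(1/2-1/p_d))}\norm{\beta^f_j}_{l^{p_d}}\right\}_{j}}_{l^{q_d}}
    \;\leq\;
    \norm{f}_{B^{\sigma_d}_{p_d,q_d}}
    \;\leq\; L_d,
\]
since the $l^{q_d}$ norm dominates the $l^\infty$ (supremum) norm; hence $\norm{\beta^f_j}_{l^{p_d}}\leq L_d 2^{-j\sigma_d'}$ with $\sigma_d'=\sigma_d+D/2-D/p_d$, and likewise $\norm{\alpha^f}_{l^{p_d}}\leq L_d$ for the $\gamma=\alpha^f$ variant. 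Note this step discards the joint $l^{q_d}$ coupling across levels; that only relaxes the constraint on $f$ and so is harmless for an upper bound.

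Next, for each fixed $f$, I would apply H\"older's inequality at each level $j$ with the conjugate exponents $p_d$ and $p_d'$:
\[
    \sum_{\psi\in\Psi_j}\gamma^f_\psi\eta_\psi
    \;\leq\;
    \norm{\beta^f_j}_{l^{p_d}}\left(\sum_{\psi\in\Psi_j}|\eta_\psi|^{p_d'}\right)^{1/p_d'}
    \;\leq\;
    L_d\, 2^{-j\sigma_d'}\left(\sum_{\psi\in\Psi_j}|\eta_\psi|^{p_d'}\right)^{1/p_d'}.
\]
Summing over $j$ from $n_1$ to $n_2$ gives a bound that no longer involves $f$, so the supremum over $f\in\F_d$ is controlled by the same expression:
\[
    \sup_{f\in\F_d}\sum_{j=n_1}^{n_2}\sum_{\psi\in\Psi_j}\gamma^f_\psi\eta_\psi
    \;\leq\;
    L_d\sum_{j=n_1}^{n_2}2^{-j\sigma_d'}\left(\sum_{\psi\in\Psi_j}|\eta_\psi|^{p_d'}\right)^{1/p_d'}.
\]
The same chain with $\norm{\alpha^f}_{l^{p_d}}\leq L_d$ and $2^{-0\cdot\sigma_d'}=1$ handles the case $\gamma=\alpha^f$, $n_1=n_2=0$.

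Finally I would take $\E_{X_1,\dots,X_n}$ of both sides; since $p_d'\geq 1$, the map $t\mapsto t^{1/p_d'}$ is concave, so Jensen's inequality gives $\E[(\sum_{\psi\in\Psi_j}|\eta_\psi|^{p_d'})^{1/p_d'}]\leq(\E\sum_{\psi\in\Psi_j}|\eta_\psi|^{p_d'})^{1/p_d'}$, and linearity of expectation over the finite sum in $j$ yields exactly the claimed bound. I do not expect a genuine obstacle here; the only points needing care are bookkeeping ones -- that relaxing the joint Besov constraint to per-level coefficient bounds and the use of concavity of $t\mapsto t^{1/p_d'}$ both point in the direction favorable to the asserted upper bound, and that the supremum over $f$ can be dropped precisely because the per-$f$ estimate after H\"older is already uniform in $f$.
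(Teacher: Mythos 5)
Your proof is correct and follows essentially the same route as the argument this paper relies on (quoted from \citet{uppal2019nonparametric}): extract the per-level coefficient decay $\norm{\beta^f_j}_{l^{p_d}}\leq L_d 2^{-j\sigma_d'}$ from membership in the Besov ball, apply H\"older's inequality with exponents $(p_d,p_d')$ at each resolution so the supremum over $f$ becomes vacuous, and finish with Jensen's inequality using concavity of $t\mapsto t^{1/p_d'}$. The only caveat, which the paper also flags elsewhere, is that the final Jensen step as written presumes $p_d'<\infty$; the case $p_d'=\infty$ is treated separately.
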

		
	Applying the lemmas above we have for any contamination density $g$,
    \begin{align}
        \label{eq:ub_misspec}
        \epsilon d_{\F}\left(\E_{g}[\hat{p}_n], 0\right)\leq
        &c \epsilon
            \left(\norm{\alpha^g}_{p_d'} 
                +\sum_{j=0}^{j_1}2^{-j(\sigma_d+D/2-D/p_d)}\norm{\beta^g_j}_{p_d'}
           \right)
    \end{align}
        
    where for all $\phi\in \Phi$ and $\psi\in \Psi_j$, $\alpha^g_{\phi} = \int \phi(x)g(x)$ and $\beta^g_{\psi}= \int \psi(x)g(x)$.

	 When the contamination is structured we have the following upper bound on the wavelet coefficients 
	    \begin{align}
	        \label{eq:ub_struct_misspec}
            |\beta^g_{\psi} |
                = \left|\int \psi(x)g(x)d(x)\right|\leq \norm{\psi_\epsilon}_\infty \norm{g}_{\infty} 2^{-Dj/2}
                \implies \norm{\beta^g_\psi}_{p_d'} \leq c2^{Dj(1/p_d'-1/2)}
        \end{align}
        where $\psi \in \Psi_j$ and $|\alpha_\phi^g|\leq \norm{\phi}_\infty\norm{g}_\infty$. Thus implying the following bound on \ref{eq:ub_misspec}
        \begin{align*}
         c \epsilon
	        \left( 1 + \sum_{j=0}^{j_0}2^{-j(\sigma_d+D/2-D/p_d)}2^{Dj/p_d'}2^{-Dj/2}
	        \right)
	       \leq c \epsilon
	        \left( 1 + \sum_{j=0}^{j_0}2^{-j\sigma_d}
	        \right)
	       \leq c \epsilon.
        \end{align*}
    
    When the contamination is unstructured, by convexity we have, 
        \begin{align*}
         \norm{\beta^g_j}_{p_d'}
             = \left(
                \sum_{\psi\in \Psi_j} |\E_g \psi(X)|^{p_d'}
                \right)^{1/p_d'}
            \leq 
                 \left(
                \sum_{\psi\in \Psi_j} \E_g |\psi(X)|^{p_d'}
                \right)^{1/p_d'}
            \leq 
                \left(
                 \E_g \sum_{\psi\in \Psi_j}|\psi(X)|^{p_d'}
                \right)^{1/p_d'}
        \end{align*}
    and we can interchange the expectation and sum in the last step because $g$ is compactly supported which implies there are only finitely many non-zero terms to sum. The compactness of the wavelets implies only finitely many wavelets overlap at a point. So we have, 
        \begin{align}
            \int \left(
            \sum_{\psi\in \Psi_j} |\psi(x)|^{p_d'}
            \right)
            g(x)dx 
            \leq c 2^{Djp_d'/2}
            \implies \norm{\beta^g_j}_{p_d'}
            \leq c2^{Dj/2}
            \label{eq:ub_unstruct_misspec}
        \end{align}
    where $c$ might depend on the dimension. 
    So we obtain the bound, (where the $\alpha$ term is bounded in the same way by a constant)
        \begin{align*}
              \epsilon d_{\F}\left(\E_{G}[\hat{p}_n], 0\right)
              &\leq c \epsilon \left(1+
                \sum_{j=0}^{j_0}2^{-j(\sigma_d+D/2-D/p_d)}2^{Dj/2}
            \right)
            = c\epsilon\left(1+
                \sum_{j=0}^{j_0}2^{j(D/p_d-\sigma_d)}
            \right)\\
            &\leq 
            c\epsilon \left(1 + 2^{j_0(D/p_d-\sigma_d)}
                \right)
        \end{align*}
        
    So, it only remains to bound the risk of the non-linear terms i.e. 
    \[
        d_{\F}\left(
            \sum_{j= j_0}^{j_1}\sum_{\psi\in \Psi_j} \tilde{\beta}_{\psi} \psi
            , \sum_{j= j_0}^{j_1}\sum_{\psi\in \Psi_j} (\beta^p_{\psi}+\epsilon\beta^g_{\psi}) \psi 
            \right)
    \]
    From lemmas \ref{lemma:coeff} and \ref{lemma:seq_norm} we will upper bound the following: 
    \[
        \sum_{j= j_0}^{j_1} 2^{-j(\sigma_d+D/2-D/p_d)}
            \left( \sum_{\psi \in\Psi_j}\E
		|(1-\epsilon)\beta^p_{\psi}+\epsilon\beta^g_{\psi}- \tilde{\beta}_{\psi}|^{p_d'}
        \right)^{1/p_d'}    
    \]
    We will need the following moment and large deviation bounds from \cite{uppal2019nonparametric}:
    
    \begin{lemma}(Moment Bounds)
        Let $X_1, \dots, X_n \sim p$, $m\geq 1$ s.t. there is a constant $c$ with $\E_p|\psi(X)|^m\leq c 2^{Dj(m/2-1)}$ for all $\psi\in \Psi_j$. Let
            \begin{align*}
				\gamma_{\psi}^p 
				    &= \E[\psi(X)],\\
				\hat{\gamma}_{\psi} 
				    &= \frac{1}{n}\sum_{i=1}^n \psi(X_i),
			\end{align*}
	    Then for all $j$ s.t. $2^{Dj}\in \mathcal{O}(n)$, 
			\[
			    \E[|\hat{\gamma}_{\psi}-\gamma_{\psi}|^m]
			        \leq c n^{-m/2}.
			\]
        where $c = c_m \left(\E_p|\psi(X)|^2\right)^{m/2}$ is a constant.
    \end{lemma}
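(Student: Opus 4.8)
The plan is to recognize this as a standard $L^m$ bound for a sum of i.i.d.\ centered random variables. Set $Y_i := \psi(X_i) - \gamma_\psi^p$, so that the $Y_i$ are i.i.d., have mean zero, and $\hat\gamma_\psi - \gamma_\psi^p = \frac1n\sum_{i=1}^n Y_i$; it then suffices to bound $\E\big|\frac1n\sum_{i=1}^n Y_i\big|^m$. First I would record two single-variable moment estimates. By Jensen's inequality, $|\gamma_\psi^p|^m = |\E\psi(X)|^m \le \E|\psi(X)|^m \le c\,2^{Dj(m/2-1)}$, so by the $c_r$-inequality $\E|Y_i|^m \le 2^{m-1}\big(\E|\psi(X)|^m + |\gamma_\psi^p|^m\big) \le 2^m c\,2^{Dj(m/2-1)}$; and $\E Y_i^2 = \V[\psi(X)] \le \E|\psi(X)|^2$, which (the hypothesis at $m=2$) is bounded by an absolute constant.

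For $m \ge 2$ I would apply Rosenthal's inequality: there is a constant $C_m$ with $\E\big|\sum_{i=1}^n Y_i\big|^m \le C_m\big[(\sum_{i=1}^n \E Y_i^2)^{m/2} + \sum_{i=1}^n \E|Y_i|^m\big] \le C_m\big[(n\,\E|\psi(X)|^2)^{m/2} + 2^m c\, n\, 2^{Dj(m/2-1)}\big]$. Dividing by $n^m$, the first term is $C_m (\E|\psi(X)|^2)^{m/2} n^{-m/2}$, already in the desired form. For the second term, the hypothesis $2^{Dj} = \mathcal{O}(n)$ gives $2^{Dj(m/2-1)} \le (C' n)^{m/2-1}$ (using $m/2 - 1\ge 0$), so $n^{-m}\cdot n\, 2^{Dj(m/2-1)} \lesssim n^{1-m}\, n^{m/2-1} = n^{-m/2}$. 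Collecting the two contributions yields $\E|\hat\gamma_\psi - \gamma_\psi^p|^m \le c_m (\E|\psi(X)|^2)^{m/2} n^{-m/2}$. For $1 \le m < 2$ the claim follows immediately from Lyapunov's inequality applied to the exponent: $\E|\hat\gamma_\psi - \gamma_\psi^p|^m \le \big(\E|\hat\gamma_\psi - \gamma_\psi^p|^2\big)^{m/2} = \big(n^{-1}\V[\psi(X)]\big)^{m/2} \le (\E|\psi(X)|^2)^{m/2} n^{-m/2}$.

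I do not expect a genuine obstacle here; the only point deserving attention is the step bounding the second Rosenthal term, where the constraint $2^{Dj} = \mathcal{O}(n)$ is precisely what is needed to absorb the high-moment contribution $n\,2^{Dj(m/2-1)}$ into $n^{m/2}$ — this is the sole role of that hypothesis. A minor bookkeeping matter is producing the stated constant $c_m(\E_p|\psi(X)|^2)^{m/2}$ rather than merely a $\lesssim n^{-m/2}$ bound; since $\E_p|\psi(X)|^2 \le c$ by the moment hypothesis at $m=2$, this distinction is cosmetic. As an alternative to Rosenthal one could use the Marcinkiewicz–Zygmund inequality together with Minkowski's inequality in $L^{m/2}$ applied to $\sum_i Y_i^2$, but Rosenthal's form makes the variance/high-moment decomposition most transparent.
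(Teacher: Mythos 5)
Your proposal is correct and follows essentially the same route as the paper: center via $Y_i=\psi(X_i)-\gamma_\psi^p$, bound $\E|Y_i|^m\le 2^m\E|\psi(X)|^m$ by the triangle and Jensen inequalities, and apply Rosenthal's inequality, with the hypothesis $2^{Dj}=\mathcal{O}(n)$ absorbing the high-moment term into $n^{-m/2}$ (the case $1\le m<2$ being handled by the variance term alone, as in the paper's indicator $1_{2<m<\infty}$). The only difference is cosmetic bookkeeping of the constant, which you correctly flag as immaterial.
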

    
    \begin{lemma}(Large Deviations)
        Let $X_1, \dots, X_n \sim p$ such that for a constant $c$, $\E_p|\psi(X)|^2\leq c $ for $\psi\in \Psi_j$. Let
            \begin{align*}
				\gamma_{\psi}^p 
				    &= \E[\psi(X)],\\
				\hat{\gamma}_{\psi} 
				    &= \frac{1}{n}\sum_{i=1}^n \psi(X_i),
			\end{align*}
        Let $l=\sqrt{j/n}$ and $\gamma>0$, then, for all $j$ s.t. $2^{Dj}\in o(n)$,
		    we have,
		    \begin{align*}
		    \Pr(|\hat{\gamma}_{\psi}-\gamma_{\psi}|>(K/2)l)
				&\leq 2 \times 2^{-\gamma n l^2}
		     \end{align*}
		where $K$ large enough such that 
	     \[
	            \frac{K^2}
				{8(c+\norm{\psi_\epsilon}_\infty(K/3))}> \log 2\gamma
	       \]
    \end{lemma}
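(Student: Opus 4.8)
The plan is to read this as a standard Bernstein-type concentration inequality for an average of IID bounded, mean-zero random variables. First I would center: fix $j$ and a daughter wavelet $\psi\in\Psi_j$, and set $Z_i := \psi(X_i) - \gamma_\psi^p$, so that $\hat\gamma_\psi - \gamma_\psi^p = \tfrac1n\sum_{i=1}^n Z_i$ is an average of IID mean-zero variables. I then record the two ingredients Bernstein needs: a variance proxy, $\E[Z_1^2] = \V_p(\psi(X)) \le \E_p|\psi(X)|^2 \le c$ by hypothesis; and an almost-sure bound, $|Z_1| \le 2\|\psi\|_\infty = 2\cdot 2^{Dj/2}\|\psi_\epsilon\|_\infty$, using $|\gamma_\psi^p|\le\|\psi\|_\infty$ together with the explicit form $\psi = 2^{Dj/2}\psi_\epsilon(2^{Dj}\,\cdot\,-k)$ of the elements of $\Psi_j$.

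Next I would apply Bernstein's inequality in the form
\[
  \Pr\Bigl(\Bigl|\tfrac1n\textstyle\sum_{i=1}^n Z_i\Bigr| > t\Bigr)
  \;\le\; 2\exp\!\Bigl(-\frac{nt^2/2}{\E[Z_1^2] + \|Z_1\|_\infty\, t/3}\Bigr),
\]
which follows from the usual Chernoff argument (the sub-exponential MGF bound $\E[e^{\lambda Z_1}] \le \exp(\tfrac{\lambda^2\E[Z_1^2]/2}{1-|\lambda|\,\|Z_1\|_\infty/3})$ for bounded mean-zero $Z_1$, optimized over $\lambda$) together with a union bound over the two tails, which produces the leading factor $2$. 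Substituting $t = (K/2)\,l = (K/2)\sqrt{j/n}$ gives $nt^2 = (K^2/4)j$, so the exponent becomes $-\dfrac{(K^2/8)\,j}{\E[Z_1^2] + \|Z_1\|_\infty(K/2)\sqrt{j/n}/3}$.

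The only delicate point — and the main (mild) obstacle — is controlling the denominator so that the stray factor $2^{Dj/2}$ hidden in $\|Z_1\|_\infty$ cancels and only $\|\psi_\epsilon\|_\infty$ (the sup norm of the \emph{mother} wavelet) survives. Here I invoke the hypothesis $2^{Dj}\in o(n)$, read as $2^{Dj}j \le n$ for large $n$, so that $2^{Dj/2}\sqrt{j/n}\le 1$ and hence $\|Z_1\|_\infty\, t/3 \le \|\psi_\epsilon\|_\infty\,(K/3)$; combined with $\E[Z_1^2]\le c$, the exponent is at most $-\dfrac{K^2/8}{c + \|\psi_\epsilon\|_\infty(K/3)}\,j$. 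The stated lower bound on $K$, i.e.\ $\dfrac{K^2}{8(c+\|\psi_\epsilon\|_\infty(K/3))} > \log 2\gamma$, is exactly what forces this exponent below $-\gamma j\log 2 = -\gamma n l^2\log 2$ (recall $nl^2 = j$), so the probability is bounded by $2\,e^{-\gamma n l^2\log 2} = 2\cdot 2^{-\gamma n l^2}$, which is the claim. If one prefers not to cite Bernstein as a black box, the identical bound is obtained by running the exponential-moment argument above directly; no new idea is needed.
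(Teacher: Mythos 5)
Your argument is correct and is exactly the standard Bernstein/Chernoff argument behind this lemma, which the paper does not reprove but imports from \citet{uppal2019nonparametric}: center, use $\E[Z_1^2]\le c$ and $|Z_1|\le 2\cdot 2^{Dj/2}\norm{\psi_\epsilon}_\infty$, plug in $t=(K/2)\sqrt{j/n}$, absorb the $2^{Dj/2}$ via the resolution restriction, and convert the exponent to base $2$ using $nl^2=j$ and the stated condition on $K$ (read as $K^2/\bigl(8(c+\norm{\psi_\epsilon}_\infty K/3)\bigr)>\gamma\log 2$). The only caveat is your reading of the hypothesis: $2^{Dj}\in o(n)$ by itself does not literally give $2^{Dj}j\le n$, but in every application in the paper one has $j\lesssim \log n$ with $2^{Dj}\lesssim n/\log n$ (or polynomially smaller than $n$), so $2^{Dj}j/n$ is bounded and your step $2^{Dj/2}\sqrt{j/n}\lesssim 1$ goes through with at most a constant change in the admissible $K$.
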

    
    Both moment and large deviation bounds from above hold for all $j$ s.t. $\E_p|\psi(X)|^m\leq c 2^{Dj(m/2-1)}$ which we have shown to hold for all $j\leq j_1$ (see equation~\ref{eq:moment_bound}) under both cases.
    
    We now provide a general lemma bounding the non-linear term that we will also use when we provide a bound on the risk of the adaptive estimator. 
    
    \begin{lemma}
        \label{lemma:non_linear_terms}
        Let $X_1,\dots, X_n \sim p$ where $p$ is compactly supported and $\F_d = B^{\sigma_d}_{p_d,q_d}$. If   $\E_p|\psi(X)|^{p_d'}\leq c_{p_d'} 2^{Dj(p_d'/2-1)}$, then the risk of the non-linear terms of the wavelet thresholding estimator defined above i.e. 
            \[
                d_{\F}\left(
            \sum_{j= j_0}^{j_1}\sum_{\psi\in \Psi_j} \tilde{\beta}_{\psi} \psi
            , \sum_{j= j_0}^{j_1}\sum_{\psi\in \Psi_j} (\beta^p_{\psi}+\epsilon\beta^g_{\psi}) \psi 
            \right)
            \]
        is bounded by 
            \begin{align}
                \label{eq:ub_non_linear_var}
                  \sum_{j=j_0}^{j_1}  
    	        2^{-j(\sigma_d+D/2-D/p_d)} 
                \frac{\norm{\beta_j^p}_s^{s/p_d'}+
                \epsilon^{s/p_d'}\norm{ \beta_j^g}_{s}^{s/p_d'}}
                {\sqrt{n}^{1-s/p_d'}}.
            \end{align}
        for any $s\in [p_g, p_d']$.
    \end{lemma}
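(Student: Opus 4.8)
The plan is to run the classical Donoho--Johnstone--Kerkyacharian--Picard hard-thresholding argument, carrying the contamination coefficient through, using the supplied Moment Bounds, Large Deviations, and representation lemmas (\ref{lemma:coeff}, \ref{lemma:seq_norm}). Write $b_\psi:=(1-\epsilon)\beta_\psi^p+\epsilon\beta_\psi^g=\E[\hat\beta_\psi]$ for the population coefficient that $\hat\beta_\psi$ is centred at; this is the target coefficient in the statement, up to constants and a lower-order $O(\epsilon)$ term. The one structural fact needed about it is that, since $s\ge p_g\ge 1$, the triangle inequality gives $|b_\psi|^s\lesssim|\beta_\psi^p|^s+\epsilon^s|\beta_\psi^g|^s$, hence $\sum_{\psi\in\Psi_j}|b_\psi|^s\lesssim\norm{\beta_j^p}_s^s+\epsilon^s\norm{\beta_j^g}_s^s$ at every resolution $j$.

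\textbf{Reducing to per-level moments.} The two functions being compared are finite wavelet sums (only finitely many $\psi\in\Psi_j$ overlap the compact supports in play), hence bounded and compactly supported, so Lemma~\ref{lemma:coeff} applies and rewrites their $d_{\F}$-distance as $\sup_{f\in\F_d}\bigl|\sum_{j=j_0}^{j_1}\sum_{\psi\in\Psi_j}\beta_\psi^f(\tilde\beta_\psi-b_\psi)\bigr|$, with the absolute value removed using symmetry of the Besov ball. Taking expectations and applying Lemma~\ref{lemma:seq_norm} with $\eta_\psi=\tilde\beta_\psi-b_\psi$ bounds $\E\,d_{\F}$ by
\[
\sum_{j=j_0}^{j_1}2^{-j(\sigma_d+D/2-D/p_d)}\Bigl(\sum_{\psi\in\Psi_j}\E\bigl|\tilde\beta_\psi-b_\psi\bigr|^{p_d'}\Bigr)^{1/p_d'}.
\]
It therefore suffices to show, for each $j\le j_1$ and up to polylog factors, that $\sum_{\psi\in\Psi_j}\E|\tilde\beta_\psi-b_\psi|^{p_d'}\lesssim n^{-(p_d'-s)/2}(\norm{\beta_j^p}_s^s+\epsilon^s\norm{\beta_j^g}_s^s)$, plus a term of order $n^{-1/2-c}$ for some $c>0$. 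Raising to the power $1/p_d'$, using subadditivity of $x\mapsto x^{1/p_d'}$ and $n^{-(p_d'-s)/(2p_d')}=\sqrt n^{-(1-s/p_d')}$, and summing over $j$ then reproduces \eqref{eq:ub_non_linear_var}, the $n^{-1/2-c}$ contribution being lower order.

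\textbf{The per-level thresholding estimate.} Write $\tilde\beta_\psi-b_\psi=(\hat\beta_\psi-b_\psi)\mathbf{1}\{|\hat\beta_\psi|>t\}-b_\psi\mathbf{1}\{|\hat\beta_\psi|\le t\}$ with $t=K\sqrt{j/n}$, and split each $\psi$ according to whether $|\hat\beta_\psi|>t$ and whether $|b_\psi|>t/2$. On the ``both large'' regime $|\tilde\beta_\psi-b_\psi|=|\hat\beta_\psi-b_\psi|$, and the Moment Bounds lemma --- whose hypothesis $\E|\psi(X)|^{p_d'}\lesssim 2^{Dj(p_d'/2-1)}$ is exactly the hypothesis of this lemma and valid for all $j\le j_1$ --- gives $\E|\hat\beta_\psi-b_\psi|^{p_d'}\lesssim n^{-p_d'/2}$; converting the count of large-target coordinates via $\mathbf{1}\{|b_\psi|>t/2\}\le(2|b_\psi|/t)^s$ and using $t^{-s}\asymp(n/j)^{s/2}$ turns this into $n^{-(p_d'-s)/2}\sum_{\psi}|b_\psi|^s$ up to a factor $j^{-s/2}$ that is at most polylogarithmic in $n$. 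On the ``both small'' regime $|\tilde\beta_\psi-b_\psi|=|b_\psi|\le t/2$ and $|b_\psi|^{p_d'}=|b_\psi|^s|b_\psi|^{p_d'-s}\le|b_\psi|^s(t/2)^{p_d'-s}$ (using $s\le p_d'$), contributing $n^{-(p_d'-s)/2}\sum_\psi|b_\psi|^s$ up to the factor $j^{(p_d'-s)/2}\le j_1^{(p_d'-s)/2}$, again polylog. On the two crossover regimes the relevant event forces $|\hat\beta_\psi-b_\psi|\gtrsim t$, and Hölder against a high moment (Moment Bounds) times the Large Deviations lemma --- which on $j\le j_1$ gives $\Pr(|\hat\beta_\psi-b_\psi|\gtrsim t)\le 2\cdot 2^{-\gamma j}$ with $\gamma$ as large as we like by enlarging $K$ --- yields a contribution $\lesssim 2^{Dj}n^{-p_d'/2}2^{-\gamma j}$, which after the weighted sum over $j\ge j_0$ is $\lesssim n^{-1/2-c}$. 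Finally the structural fact $\sum_{\psi\in\Psi_j}|b_\psi|^s\lesssim\norm{\beta_j^p}_s^s+\epsilon^s\norm{\beta_j^g}_s^s$ closes the per-level estimate.

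\textbf{Main obstacle.} The delicate part is the four-case bookkeeping: one must choose the threshold constant $K$ so that the large-deviation exponent $\gamma$ strictly exceeds the cardinality exponent $D$ of $\Psi_j$ at every level $j\le j_1$ --- which works precisely because $2^{j_1}$ is picked so that $2^{Dj/2}\sqrt{j/n}$ stays bounded, keeping the Bernstein-type exponent in the Large Deviations lemma linear in $j$ --- and one must track the powers of $n$ and of $t$ so that every surviving term lands on $n^{-(p_d'-s)/2}$ times a weighted $\ell^s$ sum, uniformly over $s\in[p_g,p_d']$. Aside from carrying $b_\psi=(1-\epsilon)\beta_\psi^p+\epsilon\beta_\psi^g$ in place of $\beta_\psi^p$ (absorbed by the structural bound on $|b_\psi|^s$), the argument is the by-now-routine thresholding analysis.
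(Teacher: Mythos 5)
Your proposal follows essentially the same route as the paper's proof: reduce via Lemmas~\ref{lemma:coeff} and \ref{lemma:seq_norm} to per-level $p_d'$-th moments of $\tilde{\beta}_\psi-b_\psi$ with $b_\psi=(1-\epsilon)\beta^p_\psi+\epsilon\beta^g_\psi$, run the thresholding case analysis of \cite{donoho1996density} with the contaminated coefficient in place of $\beta^p_\psi$, dispose of the deviation cases by taking the threshold constant $K$ (hence the exponent $\gamma$) large, bound the ``both large'' and ``both small'' cases by $n^{-(p_d'-s)/2}\sum_\psi|b_\psi|^s$ via $\mathbf{1}\{|b_\psi|>t/2\}\leq(2|b_\psi|/t)^s$ and $|b_\psi|^{p_d'}\leq|b_\psi|^s\, t^{p_d'-s}$, and only at the end split $\norm{b_j}_s^{s/p_d'}$ into the $\norm{\beta^p_j}_s^{s/p_d'}$ and $\epsilon^{s/p_d'}\norm{\beta^g_j}_s^{s/p_d'}$ pieces; this is precisely the paper's cases 1--4 followed by its Jensen step.

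One step as written is wrong, however. You partition by whether $|\hat{\beta}_\psi|>t$ and whether $|b_\psi|>t/2$, and assert that both crossover cells force $|\hat{\beta}_\psi-b_\psi|\gtrsim t$. That holds for $\{|\hat{\beta}_\psi|>t,\ |b_\psi|\leq t/2\}$, but fails for $\{|\hat{\beta}_\psi|\leq t,\ |b_\psi|>t/2\}$: for instance $|b_\psi|=|\hat{\beta}_\psi|=t$ lies in that cell with zero deviation, so the Large Deviations lemma gives nothing there, and on that cell the error is $|b_\psi|$ itself (since $\tilde{\beta}_\psi=0$), which is also not controlled by the moment bound on $\hat{\beta}_\psi-b_\psi$. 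The standard repair, and what the paper does, is to use two thresholds: invoke the deviation argument only when $|b_\psi|>2t$ (then $|\hat{\beta}_\psi|\leq t$ really does force $|\hat{\beta}_\psi-b_\psi|>t$, and one may also write $|b_\psi|\leq 2|\hat{\beta}_\psi-b_\psi|$ if one wants to reuse the moment bound), and fold the remaining grey zone $|\hat{\beta}_\psi|\leq t$, $t/2<|b_\psi|\leq 2t$ into your ``both small'' computation, which goes through verbatim with $2t$ in place of $t/2$ at the cost of a constant $2^{p_d'-s}$. With that bookkeeping fix nothing downstream changes, and your argument coincides with the paper's.
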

    
    \begin{proof}
        We follow the procedure of \cite{donoho1996density} and \cite{uppal2019nonparametric} and break up the term into different cases. The first two of which correspond to the situation where the empirical estimate and the true value of the co-efficient are far apart. Similar to the uncontaminated case, using the large deviation bounds above we show that the probability of this happening is negligible.
        
        This leaves us with two cases to consider: when the estimate $\hat{\beta}_\psi$ and the true coefficient are either both small or both large. We show that both of these cases reduce to the same term which we then bound using the properties of Besov spaces and the compactness of all densities considered.
    
    	\begin{enumerate}
    	\item 
    	    Let $A$ be the set of $\psi\in \Psi_j$
    		s.t. $\hat{\beta}_{\psi}>t$ and $(1-\epsilon)\beta^p_{\psi}+\epsilon \beta^g_{\psi}<t/2$ and $r\geq 1/p_d'$ then by H\"older's inequality,
    		\begin{align*} 
    		    \sum_{j= j_0}^{j_1} 
    			&2^{-j(\sigma_d+D/2-D/p_d)}
    			\times
        		\left(
        			\sum_{\psi \in\Psi_j}
        			\E |\beta^p_{\psi}- \tilde{\beta}_{\psi}|^{p_d'}\mathrm{1}_A
        		\right)^{1/p_d'}
        		 \\
    			&\leq\sum_{j= j_0}^{j_1} 
    			2^{-j(\sigma_d+D/2-D/p_d)}
    			\times
    			\left(
    				\sum_{\psi \in\Psi_j}
    				(\E |\beta^p_{\psi}- \tilde{\beta}_{\psi}|^{p_d'r})^{1/r}\Pr(A)^{1/r'}
    			\right)^{1/p_d'}.
            \end{align*}
            Using the large deviation and moment bound we get an upper bound,
            \begin{align*}
    				\sum_{j= j_0}^{j_1} 
    				&c2^{-j(\sigma_d+D/2-D/p_d)}
    					\left( 
    						2^{D j} 
    							n^{-p_d'/2}
    							2^{-j\gamma/r'}
    					\right)^{1/p_d'}\\
    				&\leq 
    					c n^{-1/2}
    					2^{-j_0(\sigma_d-D/2+\gamma /p_d'r')}
    			\end{align*}
    		which is negligible compared to the linear term for large enough $\gamma$.
		
    		\item 
    		    Let $B$ be the set of $\psi\in \Psi_j$
    			s.t. $\hat{\beta}_{\psi}<t$ and $(1-\epsilon)\beta^p_{\psi}+\epsilon \beta^g_{\psi}>2t$ then same as above
    			\begin{align*}
    			    \sum_{j=j_0}^{j_1} 
    			    &2^{-j(\sigma_d+D/2-D/p_d)}
    				\norm{\beta_j^p+\epsilon\beta_j^g}_{p_d'}2^{-\gamma j/p_d'}\\
    				&\leq 2^{- j_0(\sigma_d+\sigma_g'+\gamma)}+ \epsilon\sum_{j=j_0}^{j_1} 2^{-j(\sigma_d+D/2-D/p_d)}2^{-\gamma j/p_d'}\norm{\beta_j^g}_{p_d'}
    	        \end{align*}
    		which is negligible compared to the bias term and the misspecification error for large enough $\gamma$. 
    
        In other words, for the upper bounds of the first two cases we have chosen $\gamma$ (which in turn determines the value of the constant $K$ for the threshold $t=K\sqrt{j/n}$) to be large enough so that the exponent of $2^j$ in the upper bound of these two terms is negative. This enables us to upper bound the geometric series (as a sum of $j$) by a constant multiple of the first term.
    	\item 
    	    Let $C$ be the set of $\psi\in \Psi_j$ s.t. $|\hat{\beta}_{\psi}|>t$ and $|(1-\epsilon)\beta^p_{\psi}+\epsilon \beta^g_{\psi}|>t/2$ then for any $p_g\leq s\leq p_d'$,
        	\begin{align*}
        	    \sum_{j= j_0}^{j_1} 
        			&2^{-j(\sigma_d+D/2-D/p_d)}\times\left( 
        				\sum_{\psi\in C}
        				\E|(1-\epsilon)\beta^p_{\psi}+\epsilon \beta^g_{\psi}- \tilde{\beta}_{\psi}|^{p_d'}
        			\right)^{1/p_d'}\\
        		&\leq 
        			\sum_{j= j_0}^{j_1} C 
        			2^{-j(\sigma_d+D/2-D/p_d)} \sqrt{j}^{s/p_d'}\times
        			\frac{\norm{(1-\epsilon)\beta^p_j+\epsilon\beta^g_j}_{s}^{s/p_d'}}{\sqrt{n}^{1-s/p_d'}}
        	\end{align*}
        	where we have used the moment bound and the lower bound on $(1-\epsilon)\beta^p_\psi+\epsilon\beta^g_{\psi}$.

    	\item 
    	    Let $E$ be the set of $\psi\in \Psi_j$
    		s.t. $|\hat{\beta}_{\psi}|<t$ and $|(1-\epsilon)\beta^p_\psi+\epsilon\beta^g_{\psi}|<2t$ then for any $p_g\leq s\leq p_d'$:
    		\begin{align*}
    				\sum_{j= j_0}^{j_1} 
    					&2^{-j(\sigma_d+D/2-D/p_d)}
    					\left(
    					\sum_{\psi\in E} |(1-\epsilon)\beta_{\psi}^p+\epsilon \beta^g_\psi|^{p_d'}\right)^{ 1/p_d'}\\
    				& \leq
    					\sum_{j= j_0}^{j_1} 
    					2^{-j(\sigma_d+D/2-D/p_d)} \times\left(
    					\sum_{\psi\in \Psi_j} |(1-\epsilon)\beta^p_\psi+\epsilon\beta^g_{\psi}|^{s}(2t)^{p_d'-s}\right)^{ 1/p_d'} \\
    				&= 
        			\sum_{j= j_0}^{j_1}
        			2^{-j(\sigma_d+D/2-D/p_d)} \sqrt{j}^{s/p_d'}\times
        			\frac{\norm{(1-\epsilon)\beta^p_j+\epsilon\beta^g_j}_{s}^{s/p_d'}}{\sqrt{n}^{1-s/p_d'}}
    		\end{align*}
    		where we have used the upper bound on $(1-\epsilon)\beta^p_\psi+\epsilon\beta^g_{\psi}$.
    	\end{enumerate}
    	
    	By applying Jensen's inequality we can show that both 3 and 4 above are bounded, for any $s\in [p_g, p_d']$, by the following (where we omit the $\sqrt{j}$ term since it only contributes a factor of polylog of $n$ or $\epsilon$ to the upper bound), 
    	\[
    	\sum_{j=j_0}^{j_1}  
    	        2^{-j(\sigma_d+D/2-D/p_d)} 
                \frac{\norm{\beta_j^p}_s^{s/p_d'}+
                \epsilon^{s/p_d'}\norm{ \beta_j^g}_{s}^{s/p_d'}}
                {\sqrt{n}^{1-s/p_d'}}.
    	\]
    \end{proof}
	We can now bound \ref{eq:ub_non_linear_var} by 
	    	\[
    	    \sum_{j=j_0}^{j_1}  
    	        2^{-j(\sigma_d+D/2-D/p_d)} 
                \frac{\norm{\beta_j^p}_{p_g}^{s/p_d'}+
                \epsilon^{s/p_d'}\norm{ \beta_j^g}_{p_g}^{s/p_d'}}
                {\sqrt{n}^{1-s/p_d'}}.
    	\]
    Let $A$ be the set of $j$ s.t. $\norm{\beta^p_j}_{p_g}\geq \epsilon\norm{\beta^g_j}_{p_g}$ and $B = [j_0,j_1]\setminus A$. Then the above is upper bounded by 
	 \begin{align*}
            \sum_{j\in A} 
	        &2^{-j(\sigma_d+D/2-D/p_d)}
         \frac{\norm{\beta_j^p}_{p_g}^{p_g/p_d'}}{\sqrt{n}^{1-p_g/p_d'}}
         + \epsilon\sum_{j\in B} 
	        2^{-j(\sigma_d+D/2-D/p_d)}\norm{ \beta_j^g}_{p_g}\\
	   &\leq 
	    \sum_{j= j_0}^{j_1} 
	        2^{-j(\sigma_d+D/2-D/p_d)}
         \frac{\norm{\beta_j^p}_{p_g}^{p_g/p_d'}}{\sqrt{n}^{1-p_g/p_d'}}
         + \epsilon\sum_{j= j_0}^{j_1}  
	        2^{-j(\sigma_d+D/2-D/p_d)}\norm{ \beta_j^g}_{p_g}\\
	   &\leq 
	     n^{1/2(p_g/p_d'-1)} 2^{-j_m((\sigma_g+D/2)p_g/p_d'+\sigma_d-D/2)}
	     + \epsilon\sum_{j= j_0}^{j_1}  
	        2^{-j(\sigma_d+D/2-D/p_d)}\norm{ \beta_j^g}_{p_g}
	 \end{align*}
	 where the second term is bounded by the misspecification error. 

    So for both the structured and unstructured contamination setting the bound on all terms except the misspecification error is the same. In particular, we have, for the structured setting, 
        \[ 
	    \frac{1}{\sqrt{n}}+\frac{2^{j_0(D/2-\sigma_d)}}{\sqrt{n}}  +2^{-j_0(\sigma_d+\sigma_g-D/p_g+D/p_d')}+ n^{1/2(p_g/p_d'-1)} 2^{-j_m((\sigma_g+D/2)p_g/p_d'+\sigma_d-D/2)}+\epsilon
	    \]
	    which gives, 
	    \[
				\frac{1}{\sqrt{n}}+	n^{-\frac{\sigma_g+\sigma_d}{2\sigma_g+D}}+ n^{-\frac{\sigma_g+\sigma_d-D/p_g+D/p_d'}{2\sigma_g+D-2D/p_g}}
				+\epsilon
	    \]
    In contrast for the unstructured setting we have, 
        \begin{align*}
            &\frac{1}{\sqrt{n}}+\frac{2^{j_0(D/2-\sigma_d)}}{\sqrt{n}}  +2^{-j_1(\sigma_d+\sigma_g-D/p_g+D/p_d')}+ n^{1/2(p_g/p_d'-1)} 2^{-j_m((\sigma_g+D/2)p_g/p_d'+\sigma_d-D/2)}\\
            &\epsilon2^{Dj_1(D/p_d-\sigma_d)}+\epsilon
        \end{align*}

    At the given values of $j_0$ and $j_1$ this gives us an upper bound of 
    \begin{align*}
         &\frac{1}{\sqrt{n}} + \sqrt{n}^{-\frac{\sigma_g+\sigma_d}{\sigma_g+D/2}}+
        \sqrt{n}^{-\frac{\sigma_g+\sigma_d+D/p_d'-D/p_g}{\sigma_g+D/2-D/p_g}}+\\
        &
        \epsilon + \epsilon ^{\frac{\sigma_g+\sigma_d+D/p_d'-D/p_g}{\sigma_g+D-D/p_g}}.
    \end{align*}
	
	We note that the above proof implicitly assumes that $p_d'<\infty $ or equivalently $p_d>1$. We provide a bound for the case $p_d'=\infty$ in the next section since in this case it is sufficient to look at linear estimators. 
	
\subsection{Linear Rate}
    In this section we provide an upper bound on the risk of the linear wavelet estimator which is simply a non-linear estimator with the added constraint that $j_0=j_1$ or without its non-linear terms. Again, in view of lemma~\ref{lemma:nested_ipm} it is sufficient to consider the case $p_d'\geq p_g$. 
    
    We use the upper bounds on the components of the error of the non-linear wavelet estimator computed above with the additional constraint that $j_0=j_1$. Therefore we have the following upper bounds along with the implied rate. 
    In the unstructured setting, the upper bound is 
        \begin{align*}
            \frac{1}{\sqrt{n}} &+\frac{2^{j_0(D/2-\sigma_d)}}{\sqrt{n}}  +2^{-j_0(\sigma_d+\sigma_g-D/p_g+D/p_d')}\\
            \epsilon &+ \epsilon2^{Dj_0(D/p_d-\sigma_d)}
        \end{align*}
        which implies the rate
        \begin{align*}
         &\frac{1}{\sqrt{n}} +
        n^{-\frac{\sigma_g+\sigma_d+D/p_d'-D/p_g}{2\sigma_g+D-2D/p_g+2D/p_d'}}+
        \epsilon + \epsilon ^{\frac{\sigma_g+\sigma_d+D/p_d'-D/p_g}{\sigma_g+D-D/p_g}}.
    \end{align*}
    In the structured setting, the upper bound is,
        \begin{align*}
            \frac{1}{\sqrt{n}} +\frac{2^{j_0(D/2-\sigma_d)}}{\sqrt{n}}  +2^{-j_0(\sigma_d+\sigma_g-D/p_g+D/p_d')}+
            \epsilon 
        \end{align*}
        which implies the rate
        \begin{align*}
             &\frac{1}{\sqrt{n}} +
            n^{-\frac{\sigma_g+\sigma_d+D/p_d'-D/p_g}{2\sigma_g+D-2D/p_g+2D/p_d'}}+
            \epsilon.
        \end{align*}
    For $p_d'=\infty$ the bounds on the bias or the misspecification error still hold i.e. 
    \[
        2^{-j_0(\sigma_g+\sigma_d-D/p_g)} + \epsilon + \epsilon 2^{j_0(D-\sigma_d)}
    \]
    The variance bound is given by 
    \[
           \sum_{j=0}^{j_0}2^{-j(\sigma_d+D/2-D/p_d)}\E\sup_{\psi\in \Psi_j} |\hat{\beta}_\psi-\beta^p_\psi| \leq \frac{2^{j(D/2-\sigma_d)}}{\sqrt{n}}
    \]
    This is shown using the lemma above bounding large deviations. We have, 
    \begin{align*}
        \P\left(\sup_{\psi\in \Psi_j} |\hat{\beta}_\psi-\beta^p_\psi|\geq K\sqrt{j/n}\right)
            \leq 2^{j(D-\gamma)}
    \end{align*}
    which implies 
    \[
        \sum_{j=0}^{j_0}\frac{ 2^{j(2D-\sigma_d-\gamma)}}{\sqrt{n}}\leq \frac{1}{\sqrt{n}}
    \]
    for $\gamma$ sufficiently large.

    Now, with a choice of $2^{j_0}=n^{\frac{1}{2\sigma_g+D-2D/p_g}}\wedge \epsilon^{-\frac{1}{\sigma_g+D-D/p_g}}$ we have an upper bound of
    \[
        \frac{1}{\sqrt{n}}+n^{-\frac{\sigma_g+\sigma_d-D/p_g}{2\sigma_g+D-2D/p_g}} + \epsilon + \epsilon^{\frac{\sigma_g+\sigma_d-D/p_g}{\sigma_g+D-D/p_g}}
    \]
    Similarly, when contamination is structured we have a bound of 
    \[
    \frac{1}{\sqrt{n}}+n^{-\frac{\sigma_g+\sigma_d-D/p_g}{2\sigma_g+D-2D/p_g}} + \epsilon.
    \]
\subsection{Dense case: $\boldsymbol{p_d'\leq p_g}$}
    Here we provide a better upper bound on the risk when $p_d'\leq p_g$ using the linear estimator from above. In this case we obtain a better bound without using the monotonicity of the dual Besov norms from lemma 9. While most of the components of the proof are the same as in the non-linear case of Section B.1 the bound on the variance is a little more involved. 
    
    \begin{proof}
	Given $X_1, \dots, X_n\IID (1-\epsilon)p+\epsilon g$. 
    Let our estimator be $\hat{p}=\frac{1}{1-\epsilon}\hat{p}_n$ where $\hat{p}_n$ is the linear wavelet estimator defined above with 
	\begin{align*}
	    2^{j_0} &= n^{\frac{1}{2\sigma_g+D}}\wedge \epsilon^{-\frac{1}{\sigma_g+D/p_d}}
	\end{align*}

	We use the same bias variance decomposition as in the proof of the non-linear estimator with the additional constraint that $j_0=j_1$. Therefore we have no non-linear terms to bound. We have the following upper bound on the error.  
	\begin{align*}
	    \E d_{\F_d}(\hat{p},p) 
	        &\leq \frac{1}{1-\epsilon}\E d_{\F}(\hat{p}_n, \E_{(1-\epsilon)p+\epsilon g}[\hat{p}_n]) \\
            &+ d_{\F}\left(\E_{p}[\hat{p}_n], p\right)
            + \frac{\epsilon}{1-\epsilon} d_{\F}\left(\E_{g}[\hat{p}_n], 0\right)
	\end{align*}
	where the first term is the stochastic error or the variance, the second term is the bias and the third term is the misspecification error. Here again, the bound on the bias is unchanged. Moreover, the misspecification error can be bounded in the same way as in the proof of the sparse case above. We can also show here that the variance bound remains the same but this is not as straightforward since we don't just have lower resolution terms.
	
	Using lemma ~\ref{lemma:bias} we get the same bound as before on the bias or the second term above i.e. 
	    \[
	        d_{\F}\left(\E_{p}[\hat{p}_n], p\right) \leq c 2^{-j_0(\sigma_d+\sigma_g-(D/p_g-D/p_d')_+)}
	    \]
	
    Now we bound the variance or the first term. Let $\psi\in \Psi_j$ and 
	\begin{align*}
	        Y_i 
		    &= \psi_(X_i)- \E[\psi(X)]
	\end{align*}
	then for all $m\geq 1$, applying first the triangle inequality and then Jensen's inequality repeatedly we get
		\begin{align*}
			\E[|Y_i|^m] 
			    &\leq \E[\left(|\psi(X_i)|+|\E[\psi(X_i)]|\right)^m] \\
			    &\leq 2^{m-1}\left( \E[|\psi(X_i)|^m]+|\E[\psi(X_i)]|^m
			        \right) 
			        \\
				&\leq 
					2^{m}\E[|\psi(X_i)|^m]. 
		\end{align*}
        
    Therefore, by Rosenthal's inequality, i.e., 
        \begin{lemma}({\bf Rosenthal's Inequality} (\cite{rosenthal1970subspaces}))
        Let $m\in \R$ and $Y_1,\dots, Y_n$ be IID random variables with $\E[Y_i]=0$, $\E[Y_i^2]\leq \sigma^2$. Then there is a constant $c_m$ that depends only on $m$ s.t. 
	        \begin{align*}
	            \E\left[
	                \left|\frac{1}{n}\sum_{i=1}^n Y_i
	                \right|^m
	               \right] 
	                \leq c_m
	                    \left(
	                        \frac{\sigma^m}{n^{m/2}} + \frac{\E|Y_1|^m}{n^{m-1}}1_{2<m<\infty}
	                    \right)
	        \end{align*}
    
    \end{lemma}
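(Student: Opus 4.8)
\emph{Proof idea.} This inequality is classical (it is stated with attribution to \citet{rosenthal1970subspaces}), and I would recover it as follows. Write $S_n := \sum_{i=1}^n Y_i$; after multiplying through by $n^m$, the claim is equivalent to $\E[|S_n|^m] \le c_m\bigl(\sigma^m n^{m/2} + n\,\E|Y_1|^m\,\mathbf{1}_{2<m<\infty}\bigr)$, which is the form I would establish. The range $1 \le m \le 2$ is immediate from Jensen's inequality: $\E[|S_n|^m] \le (\E[S_n^2])^{m/2} = (\sum_i \E[Y_i^2])^{m/2} \le (n\sigma^2)^{m/2}$, so $c_m = 1$ works with no second term. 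Assume henceforth $m > 2$.

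The plan is to reduce first to a sum of squares. By the symmetrization inequality (using $\E[Y_i] = 0$), $\E[|S_n|^m] \le 2^m\,\E[|\sum_i \varepsilon_i Y_i|^m]$ for independent Rademacher signs $\varepsilon_i$ independent of the $Y_i$; conditioning on $(Y_i)_i$ and applying Khintchine's inequality gives $\E_\varepsilon[|\sum_i \varepsilon_i Y_i|^m] \le B_m (\sum_i Y_i^2)^{m/2}$ with $B_m$ depending only on $m$. Hence it suffices to bound $\E[(\sum_i W_i)^p]$ for the nonnegative i.i.d.\ variables $W_i := Y_i^2$ and $p := m/2 > 1$, which I would do via the Rosenthal inequality for nonnegative summands, $\E[(\sum_i W_i)^p] \le C_p\bigl((\sum_i \E W_i)^p + \sum_i \E[W_i^p]\bigr)$; applied with $\E[W_i^p] = \E|Y_i|^m$ and $\sum_i \E W_i \le n\sigma^2$, it yields $\E[(\sum_i Y_i^2)^{m/2}] \le C_m\bigl((n\sigma^2)^{m/2} + n\,\E|Y_1|^m\bigr)$, and combining with the reduction above and dividing by $n^m$ finishes the proof.

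To prove the nonnegative Rosenthal inequality I would split $(\sum_i W_i)^p$ over the events $\{S \le 2\mu\}$ and $\{S > 2\mu\}$, where $S := \sum_i W_i$ and $\mu := \E[S]$: the first piece contributes at most $(2\mu)^p$, and on $\{S > 2\mu\}$ one has $S \le 2(S - \mu)$, so the second piece is at most $2^p\,\E[|S - \mu|^p]$, a central moment of an i.i.d.\ sum. For $1 \le p \le 2$ the von Bahr--Esseen inequality bounds $\E[|S - \mu|^p] \le 2\sum_i \E|W_i - \E W_i|^p \lesssim \sum_i \E[W_i^p]$ (using $W_i \ge 0$ and $p \ge 1$). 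For $p > 2$ I would recurse: since $(W_i - \E W_i)^2 \le 2 W_i^2 + 2(\E W_i)^2$, a second application of symmetrization and Khintchine reduces $\E[|S - \mu|^p]$ to $(\sum_i(\E W_i)^2)^{p/2}$ (itself $\le \mu^p$, since $\sum_i a_i^2 \le (\sum_i a_i)^2$ for $a_i \ge 0$) plus $\E[(\sum_i W_i^2)^{p/2}]$, the same quantity at the strictly smaller exponent $p/2$. Iterating $O(\log m)$ times lands at an exponent in $(1,2]$, where von Bahr--Esseen produces the terminal term $\sum_i \E[W_i^p]$; and at each intermediate level $j$ the freshly produced term $(\sum_i \E[W_i^{2^j}])^{p/2^j}$ is, by log-convexity of $L^q$ norms interpolated between exponents $1$ and $p$ together with weighted AM--GM, bounded by $(\sum_i \E W_i)^p + \sum_i \E[W_i^p]$, so no term of a new shape survives and the final constant depends only on $m$.

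The main obstacle is exactly this last bookkeeping: one must check that every ``mixed'' quantity generated along the recursion collapses into the two allowed targets $\sigma^m n^{m/2}$ and $n\,\E|Y_1|^m$ and does not proliferate into powers $n^a\sigma^b(\E|Y_1|^m)^c$ with mismatched exponents; the algebraic fact that makes this work is that each such term, after normalization, is a geometric mean of the two targets with weights summing to one, so AM--GM dominates it by a convex combination of them. An alternative route that avoids the recursion, at least for $m$ an even integer $2\ell$, is to expand $\E[S_n^{2\ell}]$ over multi-indices, discard every monomial containing a first power of some $Y_i$ (it vanishes by independence and $\E[Y_i]=0$), group the survivors by the size $k \le \ell$ of their support, bound each surviving factor $\E|Y_i|^a$ with $a \ge 2$ by interpolation between the second and $(2\ell)$-th moments, and observe that the resulting exponents of $n$, $\sigma$ and $\E|Y_1|^{2\ell}$ again satisfy the AM--GM balance; general real $m$ then follows by interpolating between consecutive even integers. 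I would present the symmetrization route as the main argument, since it handles all real $m \ge 2$ uniformly and produces the stated form directly.
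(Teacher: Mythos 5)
This lemma is not proved in the paper at all: it is imported verbatim as a classical result of \citet{rosenthal1970subspaces} and used as a black box in the variance bound of Section B.3, so there is no internal proof to compare yours against. Judged on its own, your sketch follows the standard route and is essentially sound: Jensen disposes of $m\le 2$ (which is exactly why the second term carries the indicator $1_{2<m<\infty}$), and for $m>2$ symmetrization plus conditional Khintchine reduces $\E|S_n|^m$ to $\E[(\sum_i Y_i^2)^{m/2}]$, after which the Rosenthal inequality for nonnegative summands---obtained, as you describe, by splitting at twice the mean, invoking von Bahr--Esseen once the exponent lies in $(1,2]$, and recursing otherwise---gives $C_m\bigl((n\sigma^2)^{m/2}+n\,\E|Y_1|^m\bigr)$; dividing by $n^m$ yields the normalized statement, and the constant depends only on $m$ since the recursion depth is $O(\log m)$.

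Two caveats. First, the ``bookkeeping'' you flag is genuinely where the work lies: one must check that every intermediate term $(\sum_i \E W_i^{2^j})^{p/2^j}$ is absorbed into $(\sum_i\E W_i)^p+\sum_i\E W_i^p$; log-convexity of $q\mapsto\E W_i^q$ together with Young's inequality (your AM--GM balance) does this, but it needs to be written out rather than asserted, since it is the only step at which new term shapes could proliferate. Second, your alternative route is weaker than you suggest: after the even-integer case $m=2\ell$ is proved by expansion, passing to general real $m$ by Lyapunov interpolation between consecutive even integers produces a bound involving $\E|Y_1|^{2\lceil m/2\rceil}$ rather than $\E|Y_1|^m$, so that step does not close as stated; the symmetrization argument you lead with is the right one, as it handles all $m\ge 1$ uniformly. (Relatedly, the lemma's hypothesis ``$m\in\R$'' should be read as $m\ge 1$, which is all the paper uses, namely $m=p_d'<\infty$.)
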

    we have,
		\begin{align*}
            &\E[|\hat{\beta}_{\psi}-(1-\epsilon)\beta_\psi^p-\epsilon\beta_\psi^g|^{p_d'}]\\
				&\leq 
		         c_{p_d'} \left( \left(\E|\psi(X)|^2\right)^{p_d'/2}
		          + 
		          \frac{\E[|\psi(X)|^{p_d'}]}{n^{p_d'-1}}1_{p_d'\geq 2}
					\right)
		\end{align*}

	where $c_{p_d'}$ is a constant that only depends on $p_d'$. 

    This implies that the variance is bounded by:
    \begin{align*}
	    &
		 \sum_{j=0}^{j_0}2^{-j(\sigma_d+D/2-D/p_d)}\times\\
	   &\left(
	        \sum_{\psi\in \Psi_j} \E|\hat{\beta}_{\psi}-(1-\epsilon)\beta^p_\psi -\epsilon\beta^g_{\psi}|^{p_d'}
	    \right)^{1/p_d'}
				\\
		\leq 
		 &\sum_{j=0}^{j_0}\frac{2^{-j(\sigma_d+D/2-D/p_d)}}{\sqrt{n}}\times \\
		 &\left(
	        \sum_{\psi\in \Psi_j} \left(\E|\psi(X)|^2\right)^{p_d'/2}+ \frac{\E[|\psi(X_i)|^{p_d'}]}{n^{p_d'/2-1}}1_{p_d'\geq 2}
		 \right)^{1/p_d'}
	\end{align*}
	Now we can bound each of the terms inside the brackets separately. The second term is bounded as
	\begin{align*}
	    &\sum_{\psi\in \Psi_j}
	    \frac{\E[|\psi(X_i)|^{p_d'}]}{n^{(p_d'/2-1)}}1_{p_d'\geq 2}\\
        &\leq\sum_{\psi\in \Psi_j} 
         \frac{(1-\epsilon)\E_p[|\psi(X_i)|^{p_d'}]+\epsilon\E_g[|\psi(X_i)|^{p_d'}]}{n^{(p_d'/2-1)}}1_{p_d'\geq 2}\\
	    &\leq 
        \sum_{\psi\in \Psi_j}  \frac{2^{Dj}2^{Dj(p_d'/2-1)}  + \epsilon 2^{Djp_d'/2}}{n^{(p_d'/2-1)}}1_{p_d'\geq 2}
	    \\
	    &\leq
        \sum_{\psi\in \Psi_j} \frac{2^{Djp_d'/2} }{n^{(p_d'/2-1)}}1_{p_d'\geq 2} \leq 2^{Dj}1_{p_d'\geq 2}
	\end{align*}
    While the first term is bounded as
	\begin{align*}
	    &\sum_{\psi\in \Psi_j} \left(\E|\psi(X)|^2\right)^{p_d'/2}\\
	    &\leq
	    \sum_{\psi\in \Psi_j} \left((1-\epsilon)\E_p|\psi(X)|^2+\epsilon\E_g|\psi(X)|^2\right)^{p_d'/2}\\
	    &\leq 
	        \sum_{\psi\in \Psi_j} 
	        \left((1-\epsilon)\norm{p}_\infty + \epsilon 2^{Dj}w_\psi\right)^{p_d'/2}
	\end{align*}
	where $w_\psi$ is $\int 1_{\text{supp}(\psi)}g(x)dx$. Since we know that at any point at most finitely many wavelets intersect $\sum w_\psi \leq c \int g(x)dx =c$.
	    
	For $p_d'\leq 2$ by Jensen's we have, 
    \begin{align*}
         &2^{Dj} \left(
         \frac{1}{2^{Dj}} \sum_{\psi\in \Psi_j}(1-\epsilon)\norm{p}_\infty + \epsilon 2^{Dj}w_\psi\right)^{p_d'/2} \\
        &\leq 2^{Dj} \left(
         c + \epsilon \right)^{p_d'/2} \leq 2^{Dj}
    \end{align*}
	    
	For $p_d'\geq 2$ again by Jensen's, we have,
	    \begin{align*}
	        (1-\epsilon)2^{Dj} + 
	        &(\epsilon 2^{Dj})^{p_d'/2} \norm{w}_{p_d'/2}^{p_d'/2}\\
	        &\leq 
	            2^{Dj} + (\epsilon 2^{Dj})^{p_d'/2}
	    \end{align*}

    where we have used the fact that $\norm{w}_{p_d'/2}\leq \norm{w}_1$. Since $2^{Dj_0}\leq (1/\epsilon)^{\frac{D}{\sigma_g+D/p_d}}$, for every $j\leq j_0$, $\epsilon\leq 2^{-j(\sigma_g+D/p_d)}$. This implies 
	    
    \begin{align*}
        (\epsilon 2^{Dj})^{p_d'/2} \leq 2^{j(D/p_d'-\sigma_g)p_d'/2} 
            =2^{j(D/2-\sigma_gp_d'/2)} \leq 2^{Dj}
    \end{align*}
    In conclusion, the sum of the variance terms at any resolution $j$ (not too large) is bounded by $2^{Dj/p_d'}$.
    Therefore, we have an upper bound for the variance term, which is the same as usual, i.e., 
	\begin{align*}
		&\sum_{j=0}^{j_0}2^{-j(\sigma_d+D/2-D/p_d)}n^{-1/2} 
		    2^{D j/p_d'}\\
	 	&\leq  
			\sum_{j=0}^{j_0}2^{j(D/2-\sigma_d)}
			n^{-1/2}\\
		&\leq 
            \frac{1}{\sqrt{n}} 
            +\frac{2^{j_0(D/2-\sigma_d)}}{\sqrt{n}} 
	\end{align*}

	It only remains to bound the last term, or the misspecification error, which we can bound in the same as the non-linear case of section B.1 i.e. we have, 
	    \[
	        \frac{\epsilon}{1-\epsilon} d_{\F}\left(\E_{g}[\hat{p}_n], 0\right) \leq 
	        c\epsilon \left(1 + 2^{j_0(D/p_d-\sigma_d)}
                \right)
	    \]
	Therefore, our upper bound is $\lesssim$
	    \begin{align*}
			&\frac{1}{\sqrt{n}}
			+n^{-\frac{\sigma_g+\sigma_d}{2\sigma_g+D}}
			+n^{-\frac{\sigma_g+\sigma_d+D/p_d'-D/p_g}{2\sigma_g-2D/p_g+2D/p_d'+D}}+\\
			&\epsilon
			+\epsilon^{\frac{\sigma_g+\sigma_d}{\sigma_g+D/p_d}}
			+\epsilon^{\frac{\sigma_g+\sigma_d+D/p_d'-D/p_g}{\sigma_g-D/p_g+D}}
    	\end{align*}
	
	\end{proof}

\subsection{Adaptivity}
    We now provide a version of the thresholding wavelet estimator above that is, under the structured contamination setting, adaptive to both the contamination proportion $\epsilon$ and smoothness of the true density $\sigma$. This essentially follows from the argument provided by \cite{donoho1996density} except that we extend it to higher dimensions. We reproduce the proof here for completeness
    
    Given lemma \ref{lemma:nested_ipm} we only consider the case $p_d'\geq p_g$.
    
    We now construct the adaptive version of the thresholding wavelet estimator.
    
    Firstly, we no longer use a scaled version of $\hat{p}_n$ but the estimator $\hat{p}_n$ itself. This makes it adaptive to the contamination proportion $\epsilon$ and we will show that this costs us only a constant factor in the asymptotic rate. Secondly, we follow \citet{donoho1996density} and pick the following values for the resolution levels $j_0$, $j_1$, 
        \begin{align*}
            2^{j_0} &= n^{\frac{1}{D+2r}}\\
            2^{j_1} &= \left(\frac{n}{\log n}\right)^{1/D}
        \end{align*}
    where $r$ is the regularity of the wavelets used to construct the MRA defined above. We can decompose the error as 
		\begin{align*}
	    \E d_{\F}&\left(\hat{p}_n, p\right)\\
            &\leq \E d_{\F}
                \left(
                \sum_{ k\in \Z} \hat{\alpha}_{k} \phi_{k} +
                \sum_{j=0}^{j_0} \sum_{\psi\in \Psi_j} \hat{\beta}_{\psi}\psi,
		        \sum_{ k\in \Z} \alpha^p_{k}\phi^p_{k} +
		        \sum_{j=0}^{j_0} \sum_{\psi\in \Psi_j}\beta^p_{\psi}\psi
                \right) \\
            &+ d_{\F}\left(
            \sum_{j= j_0}^{j_1}\sum_{\psi\in \Psi_j} \tilde{\beta}_{\psi} \psi
            , \sum_{j= j_0}^{j_1}\sum_{\psi\in \Psi_j} \beta^p_{\psi} \psi 
            \right)\\
            &+ d_{\F}\left(
            \sum_{ k\in \Z} \alpha^p_{k}\phi_{k}+\sum_{j=0}^{j_1} \sum_{\psi\in \Psi_j} 
		           \beta^p_{\psi}\psi
            , p
            \right)\\
            &+ \epsilon d_{\F}\left(\sum_{ k\in \Z} 
                \alpha^g_{k}\phi_{k}
                +\sum_{j=0}^{j_1} \sum_{\psi\in \Psi_j} 
		           \beta^g_{\psi}\psi
		      ,0
		    \right)\\
		    &+ \epsilon d_{\F}\left(\sum_{ k\in \Z} 
                \alpha^p_{k}\phi_{k}
                +\sum_{j=0}^{j_1} \sum_{\psi\in \Psi_j} 
		           \beta^p_{\psi}\psi
		      ,0
		    \right)
    \end{align*}
    where we have an extra term at end as opposed to the non-adaptive case above. Since the density $p$ is bounded above this term is bounded by $\epsilon$ and hence by the misspecification error. The bound on the misspecification error does not change since it does not depend on the values of $j_0$ or $j_1$.
    
    Now, since, $\sigma_g<r$ we know that the number of linear terms or $j_0$ has is smaller than above. Moreover, since $\sigma_g> D/p_g$ the number of non-linear terms $j_1$ is larger than above. Therefore, it is clear that the bias and the variance bounds hold as above. It only remains to bound the non-linear terms which from lemma~\ref{lemma:non_linear_terms} amounts to bounding
    \begin{align}
          \sum_{j=j_0}^{j_1}  
        2^{-j(\sigma_d+D/2-D/p_d)} 
        \frac{\norm{\beta_j^p}_s^{s/p_d'}+
        \epsilon^{s/p_d'}\norm{ \beta_j^g}_{s}^{s/p_d'}}
        {\sqrt{n}^{1-s/p_d'}}.
    \end{align}
    which following the same procedure as above is bounded above by 
    \begin{align*}
            \sum_{j=j_0}^{j_1}  
	        &2^{-j(\sigma_d+D/2-D/p_d)}
         \frac{\norm{\beta_j^p}_{s}^{s/p_d'}}{\sqrt{n}^{1-s/p_d'}}
         + \epsilon\sum_{j=j_0}^{j_1}  
	        2^{-j(\sigma_d+D/2-D/p_d)}2^{Dj(1/p_d'-1/2)}\\
	 \end{align*}
    where the second term is bounded by the misspecification error. Now the first term is the same as in the case of uncontaminated setting and thereby we can bound it in the same way. 
    
    When $(2\sigma_g+D)p_g\leq (D-2\sigma_d)p_d'$ let $s=-p_d'\frac{2\sigma_d+D-2D/p_d}{2\sigma_g+D-2D/p_g}$. Note that 
    \[
        p_g\leq -p_d'\frac{2\sigma_d+D-2D/p_d}{2\sigma_g+D-2D/p_g} \leq p_d'
    \]
    where the first inequality is equivalent to the condition above and the second is equivalent to $\sigma_g\geq -\sigma_d$.
    We have the following bound, when we pick ,
    \begin{align*}
        \sum_{j=j_0}^{j_1}  
	        &2^{-j(\sigma_d+D/2-D/p_d)}
         \frac{2^{-j(\sigma_g+D/2-D/p_g)s/p_d'}}{\sqrt{n}^{1+\frac{2\sigma_d+D-2D/p_d}{2\sigma_g+D-2D/p_g}}}\\
         &\asymp n^{-\frac{\sigma_g+\sigma_d+D/p_d'-D/p_g}{2\sigma_g+D-2D/p_g}}
    \end{align*}
    as desired (where we omit any $\log(n)$ terms).
    
    Now, when $(2\sigma_g+D)p_g\geq (D-2\sigma_d)p_d'$ the error of the non-linear terms is bounded by the error of the first non-linear term i.e. $j=j_0$. We can bound the error of the non-linear terms for all $j\geq j_0^*$ where $j_0^*$ is the original non-adaptive threshold i.e. $2^{j_0}= n^{\frac{1}{2\sigma_g+D}}$. For the extra terms between $j_0$ and $j_0^*$ we show that in this range the error of the non-linear terms cannot be worse that the linear error. The large deviation terms (1) and (2) above are negligible by the same argument as above. The terms (3) and (4) can be trivially bounded by 
    \[
        2^{-j(\sigma_d+D/2-D/p_d)}\frac{2^{Dj/p_d}}{\sqrt{n}} = \frac{1}{\sqrt{n}}2^{j(D/2-\sigma_d)}
    \]
    which is bounded by the linear rate.

\section{Lower Bounds}
     In this section we prove our lower bounds. We first provide lower bounds in the case of structured contamination since these also hold for the case of unstructured contamination. We then provide additional lower bounds that are specific to the unstructured case.
     
     \subsection{Structured Contamination}
     We assume here that $G$ has a density that lives in a Besov space i.e. $\F_c = B^{\sigma_c}_{p_c,q_c}$.
     
     \begin{proof}
     We will use Fano's lemma to imply lower bounds here. 
     
     First we show that the lower bounds on the risk in the setting of no contamination also bound the risk in the contaminated setting.
     The key idea here is that if the set of densities chosen to provide bounds in the uncontaminated setting (when $\epsilon=0$) are perturbations of a ``nice'' density $p_0$, then in the contaminated setting we can choose our contamination density to be this nice density $g_0$. This will imply that the contamination does not affect the samples i.e. the samples are generated merely from the perturbation (since $(1-\epsilon)(g_0+p_\tau)+\epsilon g_0=g_0+(1-\epsilon)p_\tau$ where $p_\tau$ is some perturbation). 
     
     We first state Fano's lemma.
	
	\begin{lemma} ({\bf Fano's Lemma}; Simplified Form of Theorem 2.5 of \cite{tsybakov2009introduction})
	Fix a family $\P$ of distributions over a sample space $\X$ and fix a pseudo-metric $\rho : \P \times \P \to [0,\infty]$ over $\P$. Suppose there exists a set $T \subseteq \P$ such that there is a $p_0\in T$ with $p\ll p_0$ $\forall p\in T$ and  
        \[s := \inf_{p,p' \in T} \rho(p,p') > 0
          \quad \text{ , } \quad
          \sup_{p \in T} D_{KL}(p,p_0)
          \leq \frac{\log |T|}{16},\]
    where $D_{KL} : \P \times \P \to [0,\infty]$ denotes Kullback-Leibler divergence.
    Then,
        \[\inf_{\hat p} \sup_{p \in \P} \E \left[ \rho(p,\hat p) \right]
          \geq \frac{s}{16}\]
    where the $\inf$ is taken over all estimators $\hat p$. 

    \end{lemma}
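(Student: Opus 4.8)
The plan is to obtain this simplified Fano bound from the two standard ingredients of the ``Fano method'': a reduction from minimax estimation under the pseudometric $\rho$ to a multiple hypothesis testing problem over the separated finite set $T$, followed by Fano's inequality controlling the testing error in terms of mutual information, which in turn is bounded by the average Kullback--Leibler divergence to the reference distribution $p_0$.

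First I would reduce to testing. Given any estimator $\hat p$ built from the data $X$, define the test $\psi := \argmin_{p \in T} \rho(p, \hat p)$ (breaking ties arbitrarily). If the true distribution is some $p \in T$ and $\psi \neq p$, then the triangle inequality for the pseudometric, together with $\rho(\hat p, \psi) \leq \rho(\hat p, p)$ (by minimality of $\psi$ and $p \in T$), forces $s \leq \rho(p, \psi) \leq 2\rho(p, \hat p)$, i.e. $\rho(p, \hat p) \geq s/2$ whenever $\psi$ errs. Hence $\E_p[\rho(p, \hat p)] \geq \frac{s}{2}\pr_p(\psi \neq p)$, and taking a supremum over $p \in \P \supseteq T$ and then averaging over the uniform prior on $T$ gives
\[
\sup_{p \in \P} \E_p[\rho(p, \hat p)] \;\geq\; \frac{s}{2}\cdot\frac{1}{|T|}\sum_{p \in T}\pr_p(\psi \neq p).
\]
Since every $\hat p$ induces such a $\psi$, it now suffices to lower bound the average (Bayes) error probability of an arbitrary test on $T$ by an absolute constant; a bound of $1/8$ yields exactly $s/16$.

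Next I would invoke Fano's inequality: with $V$ uniform over the index set of $T$ and $X \mid V = p \sim p$, the average error of any test is at least $1 - \frac{I(V;X) + \log 2}{\log |T|}$. To control the mutual information I would use the variational identity $I(V;X) = \min_Q \E_V[D_{KL}(P_{X \mid V} \,\|\, Q)]$ and plug in $Q = p_0$, which is admissible precisely because $p \ll p_0$ for all $p \in T$; this yields
\[
I(V;X) \;\leq\; \frac{1}{|T|}\sum_{p \in T} D_{KL}(p \,\|\, p_0) \;\leq\; \sup_{p \in T} D_{KL}(p \,\|\, p_0) \;\leq\; \frac{\log |T|}{16}.
\]
Substituting back leaves the average error probability at least $1 - \frac{1}{16} - \frac{\log 2}{\log |T|}$, which exceeds $1/8$ once $|T|$ is at least a fixed constant; combined with the reduction above this proves $\inf_{\hat p} \sup_{p \in \P} \E[\rho(p, \hat p)] \geq s/16$.

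The only delicate point, and the main obstacle, is the bookkeeping of absolute constants: the crude $\log 2$ form of Fano's inequality degrades for very small $|T|$, so to make the statement hold uniformly in $|T| \geq 2$ one should instead use the sharpened Birg\'e/Tsybakov form carrying an additional $\frac{\sqrt{|T|}}{1 + \sqrt{|T|}}$ factor, which is exactly what Theorem 2.5 of \cite{tsybakov2009introduction} provides; the choice of the constant $16$ (so that the KL-budget coefficient $1/16$ sits safely below the threshold $1/8$) is what makes the resulting arithmetic close. Everything else, namely the testing reduction via the triangle inequality and the convexity bound on the mutual information, is routine.
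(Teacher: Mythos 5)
The paper does not actually prove this lemma: it is imported verbatim as a "simplified form" of Theorem 2.5 of Tsybakov (2009), so there is no in-paper argument to compare yours against. Your derivation is the standard proof of that cited result — the reduction from estimation under the pseudometric $\rho$ to multiple testing over the $s$-separated set $T$ via the triangle inequality (giving the factor $s/2$ times the Bayes testing error), followed by Fano's inequality with the mutual information bounded by the average, hence the supremum, of $D_{KL}(p\,\|\,p_0)$ using $p\ll p_0$ — and it is sound. The only delicate point is the one you yourself flag: the crude $\log 2$ form of Fano gives a testing error above $1/8$ only for $|T|\geq 3$, so to get the clean constant $16$ uniformly one either invokes the sharpened Birg\'e/Tsybakov version with the $\sqrt{M}/(1+\sqrt{M})$ factor (noting that Theorem 2.5 itself requires at least three hypotheses) or disposes of the case $|T|=2$ separately by a two-point Le Cam/Pinsker argument, where $D_{KL}\leq (\log 2)/16$ easily forces a constant testing error. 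With that bookkeeping spelled out, your proposal establishes exactly the statement the paper cites.
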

    
    Now we choose our set of densities as,
     \begin{align*}
        p &= p_0 &p_\tau &= p_0 + \frac{1}{(1-\epsilon)}c_g f_{\tau} \\
        g &= p_0 &g_\tau &= p_0.
    \end{align*}
    where $p_0+c_g f_\tau \in \F_g$ for every $\tau$. Notice that the KL divergence remains unchanged from the uncontaminated setting, 
    \begin{align*}
        KL((1-\epsilon)p_0+\epsilon p_0 &, (1-\epsilon)( p_0 + \frac{1}{1-\epsilon}c_g g_{\tau})+\epsilon p_0) \\
         &= KL (p_0, p_0 + c_g f_\tau)
    \end{align*}

    i.e. the KL divergence doesn't depend on the existence of contamination. Neither does $d_{\F_d}(p_\tau, p_{\tau'})$. Since, $1-\epsilon\in [1/2,1]$ we can treat it as a constant and only write $c_g$ henceforth.
    Therefore we are essentially in the case of no contamination i.e. if there exist densities $p, p_{\tau}$  indexed by $\tau$ such that they satisfy the assumptions of Fano's lemma then the conditions of Fano's lemma are also satisfied for $(1-\epsilon)p+\epsilon g$, $(1-\epsilon)p_\tau + \epsilon g_\tau$. Moreover, the distance we want to bound i.e. $d_{\F_d}(p_\tau, p_{\tau'})$ does not depend on the contamination either. Therefore, we have a lower bound here that is the same as the one in the setting with no contamination. 
    
    We note that the densities used to prove the lower bound in \cite{uppal2019nonparametric} are exactly of this form (see section B of the appendix) (\cite{uppal2019nonparametric} study the uncontaminated version of this problem). Therefore, their lower bound (see Theorem 4) is implied here i.e. 
    \[
        C\left(
		\frac{1}{\sqrt{n}}+
		n^{-\frac{\sigma_g+\sigma_d}{2\sigma_g+D}}+ \left(\frac{\log n}{n}\right)^{\frac{\sigma_g+\sigma_d-D/p_g+D/p_d'}{2\sigma_g+D-2D/p_g}}
		\right)
    \]
    
    Second, we consider the case where we ``move'' the perturbation so that the samples are generated from the same density. In particular, we first perturb the contamination and then we move this perturbation to the true density i.e. 
        \begin{align*}
        p &= p_0 &\tilde{p} &= p_0 + \frac{\epsilon}{1-\epsilon}c\psi_{\epsilon} \\
        g &= g_0 + c\psi_{\epsilon}
        &\tilde{g} &= g_0
        \end{align*}
    Then the KL divergence between the densities that generate the samples is zero since they are the same ($(1-\epsilon)p+\epsilon g$ is the same in both cases). It is easy to see that $\tilde{p}, g$ both live in the respective density classes i.e. $\F_g, \F_c$ for a small enough constant $c$. Using Le Cam's two point argument i.e. 
    
    \begin{lemma}(Le Cam (see section 2.3 of \cite{tsybakov2009introduction}))
        Let $P_1$, $P_2$ be two probability measures on $\X$ s.t. $d(P_1, P_2) = s$. If $KL(P_1, P_2) \leq \alpha<\infty$ then, for any $\hat{P}$
        \[
            \E_{P_i} [d(\hat{P}, P_i)]\geq \frac{s}{8} e^{-\alpha}
        \]
    \label{lemma:lecam}
    \end{lemma}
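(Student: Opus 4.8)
The plan is to run the classical two-point (Le Cam) reduction: turn the estimation lower bound into a lower bound on the sum of the two error probabilities of a binary test, and then control that sum by $KL(P_1,P_2)$. The only structural property of $d$ used is the triangle inequality, which all the IPM pseudometrics in this paper satisfy.

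First I would pass from estimation to testing. Given any estimator $\hat P$, define the test $\psi$ that outputs the index $j\in\{1,2\}$ minimizing $d(\hat P,P_j)$ (ties broken arbitrarily). If $\psi$ errs when the truth is $P_i$, i.e.\ $\psi=j\neq i$, then the triangle inequality together with the choice of $\psi$ gives $s=d(P_i,P_j)\le d(\hat P,P_i)+d(\hat P,P_j)\le 2\,d(\hat P,P_i)$, so $d(\hat P,P_i)\ge \tfrac{s}{2}\mathbf{1}\{\psi\neq i\}$ pointwise. Taking expectations and bounding a maximum below by an average,
\[
\max_{i\in\{1,2\}}\E_{P_i}[d(\hat P,P_i)]
\;\ge\; \frac{s}{2}\max_{i\in\{1,2\}} P_i(\psi\neq i)
\;\ge\; \frac{s}{4}\bigl(P_1(\psi=2)+P_2(\psi=1)\bigr).
\]

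Next I would lower-bound the error sum. For any test, $P_1(\psi=2)+P_2(\psi=1)\ge \inf_{\psi'}\bigl(P_1(\psi'=2)+P_2(\psi'=1)\bigr)=\int\min(dP_1,dP_2)=1-\mathrm{TV}(P_1,P_2)$, the minimum being attained by the likelihood-ratio test (Neyman--Pearson). Then I would convert this into $KL$ via the Hellinger affinity: by Cauchy--Schwarz, $\bigl(\int\sqrt{p_1p_2}\,\bigr)^2\le\bigl(\int\min(p_1,p_2)\bigr)\bigl(\int\max(p_1,p_2)\bigr)\le 2\int\min(p_1,p_2)$, and by Jensen applied to $\log$, $\int\sqrt{p_1p_2}=\E_{P_1}\bigl[\sqrt{dP_2/dP_1}\,\bigr]\ge\exp\bigl(-\tfrac12 KL(P_1,P_2)\bigr)$. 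Hence $1-\mathrm{TV}(P_1,P_2)=\int\min(p_1,p_2)\ge\tfrac12 e^{-KL(P_1,P_2)}\ge\tfrac12 e^{-\alpha}$, and substituting into the display yields $\max_i\E_{P_i}[d(\hat P,P_i)]\ge\tfrac{s}{8}e^{-\alpha}$, which is the claim (read with the maximum over $i$).

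There is no real obstacle here; the only step needing care is pinning the constant $1/8$, which comes from the chain $1-\mathrm{TV}\ge\tfrac12 e^{-KL}$ (a form of the Bretagnolle--Huber inequality) combined with the two factors of $\tfrac12$ lost in the triangle-inequality bound and in replacing a maximum by an average. I would also note that in the intended application the two candidate sampling distributions are engineered to coincide, so $KL=0$, $\alpha=0$, and the bound reduces to $s/8$; the lemma is recorded in its general form only for completeness.
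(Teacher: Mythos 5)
Your proof is correct: the reduction from estimation to testing via the triangle inequality, the Neyman--Pearson identity $\inf_{\psi}\bigl(P_1(\psi=2)+P_2(\psi=1)\bigr)=\int\min(dP_1,dP_2)$, and the Hellinger/Jensen step giving $\int\min(dP_1,dP_2)\ge\tfrac12 e^{-KL(P_1,P_2)}$ combine to give exactly the stated $\tfrac{s}{8}e^{-\alpha}$ bound (read as a maximum over $i\in\{1,2\}$). The paper does not prove this lemma itself but simply cites Section 2.3 of Tsybakov, and your argument is precisely that standard two-point argument, so there is nothing to reconcile; your closing remark that in the paper's application $KL=0$ and the bound reduces to $s/8$ is also the way the lemma is actually used.
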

    
    we have a lower bound that is the distance between $p_0$ and $p_0+\frac{\epsilon}{1-\epsilon}\psi_\epsilon $ i.e. 
    \[
        d_{\F_d}(p_0, p_0+\frac{\epsilon}{1-\epsilon}\psi_\epsilon) = \epsilon.
    \]  
    \end{proof}
    This section provided lower bounds on the risk that are minimax in the structured contamination setting. We now provide additional bounds that hold when we have no structural assumptions on the contamination.
    
    \subsection{Unstructured Contamination}
        Here we assume only that $g$ is a compactly supported probability density. We will pick a single perturbation of a ``nice'' density and use this to construct the contamination densities in such a way that the data is generated from the same density. Hence, the KL divergence between the data generating densities will be zero. Then, as before, we can apply Le Cam's two point argument to bound the risk. 

    \subsubsection{Sparse or Lower Smoothness Case}
        Let $p = g_0$, $\tilde{p} = g_0+c_g\psi_0$ for some $\psi_0\in \Psi_j$. Now we can pick densities $g, \tilde{g}$ such that 
        \[
            (1-\epsilon)p+\epsilon g = (1-\epsilon)\tilde{p}+\epsilon\tilde{g}
        \]
        if and only if 
        \[
            g-\tilde{g} = \frac{(1-\epsilon)}{\epsilon}(\tilde{p}-p)
        \]
        integrates to zero and its $L_1$ norm is $\leq 2$ (see Lemma 6.6 of Liu and Gao). For $\tilde{p}$ to be a density in $\F_g$ we need 
        \[
            c_g \leq c\min (2^{-Dj/2},2^{-j(\sigma_g+D/2-D/p_g)})
        \]
        Since $\sigma_g\geq D/p_g$, we let $c_g = 2^{-j(\sigma_g+D/2-D/p_g)}$.
        From the above constraint on the $L_1$ of $g-\tilde{g}$ norm we need 
        \[
            \frac{c_g}{\epsilon}2^{-Dj/2}\norm{\psi}_{\infty} \leq 2
        \]
        This is equivalent to
        \[
            2^{-j(\sigma_g+D-D/p_g)}\leq c \epsilon
        \]
    where $c$ is a constant. We pick $2^{j} = \epsilon^{-\frac{1}{\sigma_g+D-D/p_g}}$. 
    We also choose a simple discriminator i.e. 
        \[
            \Omega_d = \{c_d \psi_0\}
        \]
    where $c_d = 2^{-j(\sigma_d+D/2-D/p_d)}$ so that $\Omega_d\subseteq \F_d$. Then, by~\ref{lemma:lecam} the minimax risk is lower bounded by
        \begin{align*}
                d_{\F_d}(p, \tilde{p})
                    &\geq d_{\Omega_d}(p, \tilde{p})\\
                &\gtrsim c_g c_d\\
                &= c2^{-j(\sigma_g+\sigma_d+D-D/p_g-D/p_d)} \\
                &= \epsilon^{\frac{\sigma_g+\sigma_d+D/p_d'-D/p_g}{\sigma_g+D-D/p_g}}.
        \end{align*}

\end{document}